\documentclass{article}
\usepackage[utf8]{inputenc}
\usepackage{amssymb}
\usepackage[T1]{fontenc}
\usepackage{amsmath}
\usepackage{amsthm}
\usepackage{xcolor}
\usepackage{graphicx}
\usepackage{tikz}
\usetikzlibrary{arrows.meta}
\usepackage{subcaption}
\usepackage{enumerate}
\usepackage{hyperref} 
\usepackage{epic,eepic}

\newtheorem{lemma}{Lemma}
\newtheorem{fact}{Fact}
\newtheorem{definition}[lemma]{Definition}
\newtheorem{remark}[lemma]{Remark}
\newtheorem*{esc_cond}{Assumption (E)}
\newtheorem{theorem}[lemma]{Theorem}
\newtheorem{proposition}[lemma]{Proposition}
\newtheorem{corollary}[lemma]{Corollary}
\def\R{\mathbb{R}}

\def\sign{\mathop{\text{sign}}}

\newcommand{\Xup}{X_{\uparrow}}
\newcommand{\Xdown}{X_{\downarrow}}

\title{A Mathematical Characterization of Neural Activation Induced by Temporal Interference Stimulation}%\footnotemark[1]}
\author{Esteban Paduro\footnotemark[1], \, Antoine Chaillet\footnotemark[2],   \, Mario Sigalotti\footnotemark[3]}
\date{\today}
\begin{document}
\maketitle

\footnotetext[1]{Instituto de Ingenier\'ia Matem\'atica y Computacional, Facultad de Matem\'aticas, Pontificia Universidad Cat\'olica de Chile, 
%Avda. Vicu\~na Mackenna 4860, Macul, 
Santiago, Chile. Corresponding author. Email: {\tt esteban.paduro@uc.cl}
}
% \footnotetext[1]{}
\footnotetext[2]{Universit\'e Paris Saclay, CNRS, CentraleSup\'elec, Laboratoire des signaux et syst\`emes, 91190 Gif-sur-Yvette, France}
\footnotetext[3]{
Sorbonne Universit\'e, Universit\'e Paris Cit\'e, CNRS, Inria, Laboratoire Jacques-Louis Lions, Paris, France
}

\begin{abstract}
Temporal Interference Stimulation (TIS) is a non-invasive neuromodulation technique in which two high-frequency sinusoidal currents with slightly different frequencies generate a low-frequency envelope that can activate deep neural structures. This study investigates the conditions under which TIS elicits action potentials in a single neuron modeled by the FitzHugh–Nagumo system. This research integrates phase-plane analysis and geometric singular perturbation to develop a mathematical framework for analyzing TIS. By combining a mathematical analysis of differential equations with computer simulations, the study elucidates how the amplitudes and beat frequency jointly determine whether the neuron remains quiescent, exhibits only transient responses, or undergoes persistent (tonic) firing.
\end{abstract}

\vspace{0.2 cm}

{\bf Keywords:} FitzHugh--Nagumo equation, dynamical systems, singular limit, neurostimulation

\vspace{0.2 cm} {\bf AMS subject classifications:} 37N25, 92-10.

% {\color{blue}
% \begin{itemize}
% \item Literature review and motivation.
% %\item add more comments to numerical experiments
% \item make sure that the introduction is consistent with the conclusions
% \end{itemize}}
\section{Introduction}
Temporal Interference Stimulation (TIS) is a non-invasive brain stimulation technique in which two or more electrodes deliver sinusoidal signals at kilohertz frequencies with slight differences, resulting in a low-frequency envelope modulated near the target region \cite{Ward2009, Grossman2017, Mirzakhalili2020}. This method enables precise, targeted activation of deep brain regions \cite{xu_precision_2025}, positioning it as a promising tool for the treatment of neurological disorders such as Parkinson’s disease \cite{yang_transcranial_2025}, epilepsy \cite{missey2025non}, and memory deficits \cite{violante2023non}.

The application of TIS requires careful electrode positioning and tuning of the electrical parameters. The location of the sources must account for the target region and the orientation of the neurons' axons to induce a neural response in the targeted structure. The electrical parameters are the magnitudes of the two alternating currents, their respective frequencies $\omega_1$ and $\omega_2$, and the shape of the delivered signals. The source frequencies $\omega_1$ and $\omega_2$ are typically chosen in the kilohertz range with a small beat frequency $\eta = \omega_2 - \omega_1 \ll \omega_1$. 

As reviewed in \cite{xu_precision_2025}, extensive literature has been devoted to the optimal choice of the stimulation parameters, though three complementary approaches: in vivo experimentation \cite{Grossman2017, Missey2021, opancarSameBiophysicalMechanism2025}, physics-based numerical simulations \cite{Grossman2017, Mirzakhalili2020, wangResponsesModelCortical2023}, and theoretical analysis \cite{Karimi2019, Mirzakhalili2020, karimiNeuromodulationEffectTemporal2024, cerpa2023, plovieNonlinearitiesTimescalesNeural2025, cerpaApproximationStabilityResults2025}. These existing works study the effect of different parameters on the efficiency of TIS, including positioning of sources \cite{Grossman2017, Karimi2019, xu_precision_2025, karimiNeuromodulationEffectTemporal2024}, axons' orientation \cite{Missey2021, wangResponsesModelCortical2023}, conduction block effects away from the target region \cite{Mirzakhalili2020, wangResponsesModelCortical2023}, sources frequencies \cite{Mirzakhalili2020, opancarSameBiophysicalMechanism2025}, and the model's nonlinearities \cite{plovieNonlinearitiesTimescalesNeural2025}.

In this paper, we seek to characterize an effective choice of parameters by applying analytical and computational methods to a model equation. For this purpose, we consider a fixed position in space and a single-neuron model with a given orientation; thus, the source's amplitude parameters incorporate the distance to the target, the axon's orientation, and the medium's attenuation. Our goal is to identify the parameter combination that elicits tonic spiking in the targeted zone. 

To that end, we assimilate a targeted neuron to a FitzHugh–Nagumo (FHN) model, which offers a good compromise between mathematical tractability and biological significance, with sufficient nonlinear complexity to capture the relevant phenomena. We consider the effects of the modulated envelope resulting from the interference between the two source signals. We show in Section \ref{sec-3} that the targeted neuron cannot undergo tonic spiking if this modulated envelope depends too smoothly on time. Accordingly, we then propose two strategies to overcome this limitation. The first one, proposed in Section~\ref{sec:four}, consists of renouncing the smoothness of the modulated envelope: we provide conditions under which a piecewise constant modulated input guarantees tonic spiking provided that the beat frequency is picked sufficiently small. The second one (Section~\ref{section_escaping_condition}) consists of allowing a beat frequency proportional to the time scale of the recovery variable; by relying on a singular limit system, we obtain heuristic results for tonic spiking. We also confront our theoretical findings with numerical simulations (Section~\ref{sec:numerical_experiments}).

\section{Setup and preliminary results}\label{sec-2}

We consider a FitzHugh–Nagumo (FHN) model of the targeted neuron.  
This well-known model is made of two differential equations:
\begin{equation}\label{full_FHN}
\left\{
\begin{aligned}
\dot{V} &= V- \frac{V^3}{3} -W +I(t),\\
\dot{W} &= \varepsilon(V-\gamma W +\beta), 
\end{aligned}
\right.
\end{equation}
where $\varepsilon$, $\beta$, and $\gamma$ are positive parameters, $V$ is the neuron's membrane potential, and $W$ is a recovery variable (accounting for both sodium channels activation and potassium channels deactivation). The input $I$ is the weighted sum of the sine waves arising from the two interfering sources:
\begin{equation}\label{interferential_input}
I(t) = A \omega_1 \cos(\omega_1 t) + B \omega_2 \cos(\omega_2 t),
\end{equation}
where $A$ and $B$ denote positive constants. 
Following \cite{cerpa2023}, applying a temporal averaging over the fast carrier frequency yields a more tractable system in which the effects of the beat frequency are explicit:
\begin{equation}\label{FHN_intro}
\left\{
\begin{aligned}
    \dot{v} &= v\left(1-\frac{A^2}{2}-\frac{B^2}{2}- AB \cos(\eta t)\right) - \frac{v^3}{3}- w,\\
    \dot{w} &= \varepsilon (v-\gamma w + \beta).
\end{aligned}
\right.
\end{equation}

\noindent The model \eqref{FHN_intro} falls in the more general class of nonautonomous systems
\begin{equation}\label{general_switching_system}
\left\{
\begin{aligned}
    \dot{v} &= v\left(1-\frac{A^2}{2}-\frac{B^2}{2}- ABf(t) \right) - \frac{v^3}{3}- w,\\
    \dot{w} &= \varepsilon (v-\gamma w + \beta),
\end{aligned}
\right.
\end{equation}
where $f\in L^\infty([0,+\infty),[-1,1])$ and $A,B,\varepsilon,\gamma,\beta>0$. 
This system coincides with \eqref{FHN_intro} when $f(t)=\cos(\eta t)$, but it is useful to consider more general signals for two main reasons. On the one hand, we can approximate a trigonometric function by piecewise constant functions for which the mathematical analysis is simpler. On the other hand, some geometric reasoning is independent of the specific signal $f$ and can be better understood in a more general framework.  

At rest, the cell membrane of a neuron is polarized, resulting in a negative membrane voltage. An action potential is characterized by a rapid depolarization followed by a repolarization, during which the membrane potential $V$ reaches positive values. Since $v$ can be seen as the membrane potential $V$ from which direct effects of high-frequency inputs are removed \cite[Figure 3]{cerpa2023}, requesting positive values for $v$ essentially corresponds to having a positive value of the low frequencies in $V$. Accordingly, we adopt the following convention.

\begin{definition}[Action potential, tonic spiking]\label{def-spike}
Given $t_1\geq t_0\geq 0$, a solution of \eqref{general_switching_system}  is said to undergo an \emph{action potential} in the time interval $[t_0, t_1]$ if there exists $t\in [t_0,t_1]$ such that $v(t) \geq 0$. 
A solution of \eqref{general_switching_system} is said to undergo
\emph{tonic spiking} if it exhibits persistent action potentials, that is, for every $t_0\geq 0$ there exists $t\geq t_0$ such that $[t_0,t]$ contains an action potential.
\end{definition}

Most neuroscience applications require not only eliciting an action potential in the targeted neuron but also inducing durable neuronal activity. Accordingly, this study seeks to determine conditions on the parameters $A, B,\varepsilon, \gamma,\beta$, and on the signal $f$ such that the solutions of \eqref{general_switching_system} exhibit tonic spiking.

\subsection{An attractive invariant set}

The following observation shows that, no matter the precise choice of the input signal $f$, it is always possible to design a bounded area from which solutions cannot escape and that attracts all possible solutions. In turn, this result shows that, despite the cubic nonlinearity in the vector field of \eqref{general_switching_system}, all solutions are well defined (and bounded) at all positive times.

\begin{fact}[Forward invariant and exponentially attractive set]\label{fact1}
For any $f \in L^\infty([0,+\infty),$ $[-1,1])$ and any $L>0$ large enough, the set
 $[-L,L]\times [-S,S]$ with $S:=\frac{L+\beta}{\gamma}+1$ is forward invariant and globally exponentially attractive for  system \eqref{general_switching_system}.
\end{fact}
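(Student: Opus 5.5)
Write $a(t):=1-\frac{A^2}{2}-\frac{B^2}{2}-ABf(t)$. Since $|f|\le 1$, the coefficient satisfies $|a(t)|\le a_{\max}:=1+\frac{A^2}{2}+\frac{B^2}{2}+AB$ for almost every $t$. The plan is to check that on each side of the rectangle $[-L,L]\times[-S,S]$ the vector field points strictly inward, uniformly in $t$. This gives forward invariance: solutions are Carathéodory solutions, since $f$ is only $L^\infty$, but a scalar function $\phi$ with $\dot\phi<0$ a.e. whenever $\phi=0$ cannot cross zero. Attractivity will come from an exponential estimate outside the box.

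\emph{Vertical sides.} At $v=\pm L$ with $|w|\le S$, we have $\pm\dot v = a(t)L - \frac{L^3}{3} \mp w \le a_{\max}L-\frac{L^3}{3}+S$. Because $S=\frac{L+\beta}{\gamma}+1$ grows only linearly in $L$, this is negative once $L$ is large. That is the meaning of ``$L$ large enough''.

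\emph{Horizontal sides.} At $w=S$ with $|v|\le L$, we have $\dot w=\varepsilon(v-\gamma S+\beta)\le \varepsilon(L+\beta-\gamma S)=-\varepsilon\gamma<0$. The case $w=-S$ is symmetric: $\dot w\ge \varepsilon(-L+\beta+L+\beta+\gamma)>0$. This is exactly why the extra $+1$ appears in $S$.

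\emph{Global exponential attractivity.} I would avoid the box-norm and use a Lyapunov-type function instead. Take $U(v,w)=\frac{v^2}{2}+\frac{w^2}{2\varepsilon}$. The cross terms $-vw+vw$ cancel, so
\[
\dot U = a(t)v^2-\frac{v^4}{3}-\gamma w^2+\beta w .
\]
Using $a(t)v^2-\frac{v^4}{3}\le C-v^2$ and $\beta w\le \frac{\gamma}{2}w^2+\frac{\beta^2}{2\gamma}$, this gives $\dot U\le -\lambda U + K$ for some $\lambda,K>0$ independent of $f$. Hence $U(t)\le e^{-\lambda t}U(0)+K/\lambda$, so every solution exists for all $t\ge0$, and its distance to the sublevel set $\{U\le K/\lambda+\delta\}$ decreases exponentially.

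It remains to fit that sublevel set inside the rectangle, and this is the main obstacle. The set $\{U\le K/\lambda+\delta\}$ lies in $|v|\le c_1$, $|w|\le c_2$ for constants independent of $L$. So for $L$ large, with $S\ge L/\gamma$ growing, it is contained in the interior of $[-L,L]\times[-S,S]$.

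It is less clean to show that trajectories then enter the box itself and that the distance to the box decays exponentially, rather than merely to the ellipse. I would argue as follows. The ellipse is reached after a time that is logarithmic in $U(0)$, and it is contained in the box. Before that time, $\operatorname{dist}(\cdot,\text{box})\le \sqrt{2\max(1,\varepsilon)\,U}$, which decays like $e^{-\lambda t/2}$ up to a constant. Combining the two phases yields an exponential bound $\operatorname{dist}((v,w)(t),\text{box})\le Ce^{-\mu t}\operatorname{dist}((v,w)(0),\text{box})$, possibly after enlarging $C$ to handle initial data near the box.
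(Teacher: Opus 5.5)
Your proposal is correct and is essentially the paper's proof. Forward invariance comes from strict inward pointing on the four sides. Attractivity comes from a quadratic Lyapunov function that decays outside a bounded set, which fits inside the rectangle once $L$ is large. The only difference is that the paper uses $v^2+w^2$ and absorbs the cross term $vw$ through a polynomial inequality valid for large $|(v,w)|$, whereas your $1/\varepsilon$ weighting cancels it outright.

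Two of your refinements, which the paper itself glosses over, should rest on the neighborhood version of your edge estimates (valid by continuity, uniformly in $t$). First, the condition ``$\dot\phi<0$ a.e.\ on $\{\phi=0\}$'' is vacuous when that set is null. Second, with $d(t):={\rm dist}((v,w)(t),\text{box})$, enlarging $C$ cannot by itself give $d(t)\le Ce^{-\mu t}d(0)$ as $d(0)\to0$. That bound instead follows because such trajectories enter the box within a time proportional to $d(0)$.
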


\begin{proof}
 At every point of the segment $[-L,L]\times \{S\}$, any solution of \eqref{general_switching_system} satisfies $\dot w=v-\gamma S+\beta\le 0$. Similarly, we have that $\dot w\geq 0$ over the segment $[-L,L]\times \{-S\}$. These facts show that solutions cannot escape the rectangle $[-L, L]\times [-S, S]$ from its top and bottom edges. Now consider the points in $\{L\}\times [-S,S]$. For such points, we have $\dot v\leq L(1-A^2/2-B^2/2-f(t)AB)-L^3/3+S= L(1-A^2/2-B^2/2-f(t)AB)-L^3/3+\frac{L+\beta}{\gamma}+1$. By picking $L$ large enough, we see that $\dot v\leq 0$. Similarly, $\dot v\geq 0$ at each point of  $\{L\}\times [-S, S]$ and solutions cannot escape the rectangle by its lateral edges either. Thus, the set $[-L,L]\times [-S,S]$ is positively invariant.

 To show it is globally exponentially attractive, let us first notice that there exist $\rho,\alpha>0$ such that the directional derivative of the Lyapunov function $Q(v,w):=v^2+w^2$ along the trajectories of \eqref{general_switching_system} satisfies
 \begin{align*}
     |(v,w)|>\rho\quad \Rightarrow \quad \frac{d}{dt}Q(v,w)\leq -\alpha Q(v,w),
 \end{align*}
where $|\cdot|$ denotes the standard Euclidean norm in $\R^2$.
This easily follows from the fact that, given $c_1,c_2,c_3\in \R$, there exists $\alpha>0$ such that $v^4+w^2+c_1 vw +c_2 v^2+c_3 w>\alpha (v^2+w^2)$ for $|(v,w)|$ large enough.  Hence, every trajectory of  \eqref{general_switching_system} reaches the ball of radius $\rho$ centered at the origin exponentially fast. Choosing $L$ large enough so that $[-L, L]\times [-S, S]$ contains such a ball gives us the result.
\end{proof}

\subsection{The frozen system}\label{key_objects}

Let us first examine the phase portrait of \eqref{general_switching_system} when the input term $f$ is constant. To that aim, fix $c\in [-1,1]$ and consider the ``frozen'' version of \eqref{general_switching_system} in which $f(\cdot)\equiv c$, namely:
 \begin{align}\label{eq-1}
     \begin{pmatrix}
         \dot v \\ \dot w
     \end{pmatrix}=G_c(v,w),
 \end{align}
where $G_c(v,w):=(g_1(v,w,c),\varepsilon g_2(v,w))^\top$ with
\begin{align}
    g_1(v,w,c) &:= v\left(1-\frac{A^2}{2}-\frac{B^2}{2}- c AB\right) - \frac{v^3}{3}- w, \label{defi_F1} \\  
    g_2(v,w) &:= v-\gamma w + \beta. \label{defi_F2}
\end{align}
The $v$-nullcline of \eqref{eq-1} is %then
given by all the state values for which $g_1$ vanishes. Defining 
\begin{align}\label{eq-rc}
    r(c):=1-\frac{A^2}{2}-\frac{B^2}{2}- c AB,
\end{align}
this nullcline reads:
\begin{equation*}
  C_c:=  \{(v,w)\in\mathbb R^2\mid g_1(v,w,c)=0\} = \left\{(v,w)\in\mathbb R^2\mid w = v\left(r(c)-\frac{v^2}{3}\right)\right\}.
\end{equation*}

In the $(v,w)$ plane, this nullcline is thus a cubic curve that crosses the origin and whose number of intersections with the horizontal axis depends on the sign of $r(c)$. 
Because of the monotonicity of $c\mapsto g_1(v,w,c)$ (which is increasing for $v<0$ and decreasing for $v>0$), each cubic $C_c$, $c\in [-1,1]$, is contained in the region of the state space which is below $C_1$ and above $C_{-1}$ for $v<0$ and below $C_{-1}$ and above $C_{1}$ for $v>0$ (see Figure~\ref{fig_cubics_monotonicity}).

\begin{fact}\label{fact_three_intersections}
If $A+B<\sqrt{2}$ then, given any $c\in[-1,1]$, the $v$-nullcline $C_c$ has three intersections with the horizontal axis.
\end{fact}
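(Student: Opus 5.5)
The plan is to compute the horizontal-axis intersections of $C_c$ directly and reduce the claim to a sign condition on $r(c)$. Setting $w=0$ in the description of $C_c$ gives
\[
v\left(r(c)-\frac{v^2}{3}\right)=0,
\]
whose solutions are $v=0$ and, when $r(c)>0$, the two further roots $v=\pm\sqrt{3r(c)}$. These three roots are distinct precisely when $r(c)>0$. If $r(c)\le 0$, the only real root is $v=0$. So it suffices to show that $r(c)>0$ for every $c\in[-1,1]$.

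To do this, I will use that $c\mapsto r(c)$ is affine with slope $-AB<0$, since $A,B>0$. Its minimum over $[-1,1]$ is therefore attained at $c=1$, where
\[
r(1)=1-\frac{A^2}{2}-\frac{B^2}{2}-AB=1-\frac{(A+B)^2}{2}.
\]
Because $A+B>0$, the hypothesis $A+B<\sqrt2$ gives $(A+B)^2<2$, hence $r(1)>0$. It follows that $r(c)\ge r(1)>0$ for all $c\in[-1,1]$.

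Combining the two steps, $C_c$ meets the $v$-axis exactly at $-\sqrt{3r(c)}$, $0$ and $\sqrt{3r(c)}$, which are three distinct points. There is no real obstacle here. The only point needing care is recognizing the perfect square $A^2+2AB+B^2$ at the worst case $c=1$, which is exactly where the threshold $\sqrt2$ comes from.
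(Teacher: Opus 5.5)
Your proof is correct and follows the same route as the paper: you reduce the claim to the sign of $r(c)$ via the roots $0,\pm\sqrt{3r(c)}$, then use that $r$ is decreasing in $c$ with $r(1)=1-\frac{(A+B)^2}{2}>0$. The only difference is that you write out the perfect-square computation at $c=1$, which the paper leaves implicit.
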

\begin{proof}
We see from its definition that $C_c$ has a 
unique intersection at the origin if $r(c)\leq 0$, and three intersections otherwise. Our assumption implies $r(1)>0$, and since $r$ is monotone decreasing, this guarantees that, for every $c\in [-1,1]$, the $v$-nullcline $C_c$ has three distinct intersections with the horizontal axis: $\left(\sqrt{3 r(c)},0\right)$, $\left(-\sqrt{3 r(c)},0\right)$, and $(0,0)$.
\end{proof}

Under the assumption $A+B<\sqrt{2}$ appearing in 
Fact~\ref{fact_three_intersections},
for every $c\in [-1,1]$ the point 
\begin{align}\label{eq-vm-wm}
(v_m(c),w_m(c)):=\left(-\sqrt{r(c)},-\frac23 r(c)^{3/2}\right)\in C_c
\end{align}
is such that the cubic nullcline $C_c$ reaches a local minimum $w_m(c)$ at $v=v_m(c)$. Note that $(-v_m(c),-w_m(c))$ turns out to be the local maximum of $C_c$.

Observe that the map $c\mapsto (v_m(c),w_m(c))$ parametrizes a curve $J_m$ which connects $(v_m(-1), w_m(-1))$ and $(v_m(1), w_m(1))$ and that its two components are monotonically increasing with $c$, as depicted by Figure~\ref{fig_cubics_monotonicity}).  

Concerning the $w$-nullcline of \eqref{eq-1}, it is given by
\begin{equation}\label{eq-Lambda}
\Lambda:=\left\{(v,w)\in\R^2\mid g_2(v,w)=0\right\}=\left\{(v,w)\in\mathbb R^2 \mid w=\frac{v+\beta}{\gamma}\right\}.
\end{equation}
The $w$-nullcline is thus a line with positive slope that intersects the horizontal axis at $(-\beta,0)$. 

By definition, the equilibria $(v_e(c),w_e(c))$ of \eqref{eq-1} lie at the intersection of the two nullclines $C_c$ and $\Lambda$. The following result, whose proof is provided in Appendix \ref{sec-properties}, discusses the stability properties of these equilibria.

\begin{lemma}[Stability properties of the frozen system]\label{properties_equilibrium}
Let $c \in [-1,1]$ and assume that the quantity defined in \eqref{eq-rc} satisfies $r(c)>0$.
Then system \eqref{eq-1} has a unique equilibrium $(v_e(c),w_e(c))$ in $(-\infty,0]\times\R$. Moreover, this equilibrium satisfies 
$v_e(c)< 0$ and we have that
\begin{enumerate}[(i)]
\item $(v_e(c),w_e(c))$ is the unique equilibrium of \eqref{eq-1} 
if and only if $(r(c)-\frac{1}{\gamma})^{3}< \frac{9}{4} \frac{\beta^2}{\gamma^2}$.
\label{lemma_item_uniqueness}

\item $(v_e(c),w_e(c))$ is locally exponentially stable for \eqref{eq-1} if and only if  $r(c)-v_e(c)^2 < \min\{ \varepsilon \gamma, \frac{1}{\gamma}\}$.\label{lemma_item_iv}

\item If $(v_e(c), w_e(c))$ is the unique equilibrium of \eqref{eq-1} and if $r(c) - v_e(c)^2<0$ then $(v_e(c),w_e(c))$ is globally exponentially stable for all $\varepsilon>0$ small enough. \label{lemma_item_global}
\end{enumerate}
\end{lemma}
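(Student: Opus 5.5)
Write $r=r(c)>0$. Equilibria are the zeros of $h(v):=v(r-\tfrac{v^2}{3})-\tfrac{v+\beta}{\gamma}$, a cubic with $h(v)\to+\infty$ as $v\to-\infty$ and $h(0)=-\beta/\gamma<0$. So $h$ has at least one zero in $(-\infty,0)$. Uniqueness on $(-\infty,0]$ should follow from the shape of $h$. Let $s:=r-1/\gamma$, so that $h'(v)=s-v^2$.
\begin{itemize}
\item If $s\le0$, then $h$ is strictly decreasing, hence it has a single real zero.
\item If $s>0$, then $h$ decreases on $(-\infty,-\sqrt s]$, increases on $[-\sqrt s,\sqrt s]$ and decreases afterwards. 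Its local minimum is $h(-\sqrt s)=-\tfrac23 s^{3/2}-\beta/\gamma<0$. Hence on $(-\infty,0]$ there is exactly one zero, in $(-\infty,-\sqrt s)$, and the monotone increasing stretch up to $0$ stays negative.
\end{itemize}
Thus $v_e(c)<0$ is the unique negative root.

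For (i), any further equilibria must have $v>0$. They exist iff $s>0$ and the local maximum $h(\sqrt s)=\tfrac23 s^{3/2}-\beta/\gamma$ is $\ge0$. So uniqueness holds iff $s\le 0$ or $\tfrac23 s^{3/2}<\beta/\gamma$. Squaring gives exactly $s^3<\tfrac94\beta^2/\gamma^2$, and this inequality also covers the case $s\le0$ automatically.

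For (ii), I linearize at the equilibrium. The Jacobian is $\begin{pmatrix} r-v_e^2 & -1\\ \varepsilon & -\varepsilon\gamma\end{pmatrix}$, with trace $r-v_e^2-\varepsilon\gamma$ and determinant $\varepsilon(1-\gamma(r-v_e^2))$. The equilibrium is hyperbolic and stable iff trace $<0$ and det $>0$, i.e.\ $r-v_e^2<\varepsilon\gamma$ and $r-v_e^2<1/\gamma$. The converse direction is the delicate part. If either inequality is reversed, there is an eigenvalue with nonnegative real part, so exponential stability fails. In the borderline cases the spectrum touches the imaginary axis, so local exponential stability also fails. This is the precise sense of the ``if and only if''.

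For (iii), the hypothesis $r-v_e^2<0$ puts the equilibrium on the left outer branch of $C_c$, below the local minimum $v_m(c)=-\sqrt r$. I would use the standard slow–fast picture as $\varepsilon\to0$. By Fact~\ref{fact1}, all trajectories enter a fixed compact box. The fast fibres are horizontal, and trajectories converge to the attracting outer branches $|v|>\sqrt r$ of $C_c$. The reduced flow on the left branch moves toward the equilibrium, since $\dot w$ has the sign of $g_2$, which changes sign there. On the right branch, $\Lambda$ lies above $C_c$ because the equilibrium is unique, so $\dot w>0$ and the slow flow climbs to the fold at $(\sqrt r,\tfrac23 r^{3/2})$. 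There it jumps to the left branch, which it meets at some $v<-\sqrt r$; from that point it slides toward the equilibrium without reaching the lower fold.

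To make this rigorous I would invoke Fenichel theory plus the exit-from-fold results (Krupa–Szmolyan) to exclude periodic orbits, hence relaxation cycles, for small $\varepsilon$. Then local exponential stability from (ii) (note $r-v_e^2<0<\min\{\varepsilon\gamma,1/\gamma\}$) combined with Poincaré–Bendixson in the invariant box yields global asymptotic, hence exponential, convergence. An alternative would be a Dulac/Bendixson argument. The main obstacle is this exclusion of limit cycles uniformly in small $\varepsilon$; the fold passage is the technical point.
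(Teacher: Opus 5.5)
Your argument for the negative equilibrium, for (i) (using the critical values of the cubic instead of the discriminant) and for (ii) is correct and essentially the paper's. Your singular-limit picture in (iii) is also accurate.

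The gap is in (iii). Once periodic orbits are excluded, your conclusion via Poincar\'e--Bendixson and (ii) goes through. But excluding periodic orbits for all sufficiently small $\varepsilon$ is the whole content of the item, and you only announce that Fenichel theory and the Krupa--Szmolyan fold results would give it. Those results describe the flow near the critical manifold. Turning them into a non-existence statement for cycles needs an extra argument, for instance that periodic orbits $\gamma_{\varepsilon_n}$ converge, as $\varepsilon_n\to0$, to a singular cycle, with control of possible segments along the repelling middle branch. You do not supply this. Nor can (ii) be used here: the slow eigenvalue at $(v_e(c),w_e(c))$ is $O(\varepsilon)$, so the neighbourhood that linearization places in the basin may shrink as $\varepsilon\to0$.

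The missing idea is topological, and it makes the fold analysis unnecessary. Any periodic orbit lies in the box $R$ of Fact~\ref{fact1} and encircles the unique equilibrium. It must therefore meet the fixed arc $\Gamma=\{(v,w)\in C_c\cap R\mid v\le v_e(c)\}$ of the attracting left branch, which joins the equilibrium to $\partial R$. So it is enough to show that, for all small $\varepsilon$, the whole arc $\Gamma$ lies in the basin of attraction, since a periodic orbit cannot meet the basin.

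The paper proves this by hand, in two steps.
\begin{enumerate}
\item The weighted Lyapunov function $\frac12\tilde v^2+\frac1{2\varepsilon}\tilde w^2$ shows that the horizontal segment $[v_e-\sqrt{2M},v_e+\sqrt{2M}]\times\{w_e\}$ lies in the basin, with $M=(v_e^2-r)^2/(2v_e^2)$ independent of $\varepsilon$.
\item Consider the thin strip between $\Gamma$ and its left translate $T_{-\delta}\Gamma$, above that segment. It is entered through $\Gamma$, it has $\dot w<0$, and $G_c$ points inward on $T_{-\delta}\Gamma$ once $\varepsilon$ is small. Hence it funnels every trajectory reaching $\Gamma$ onto the segment, or straight into the equilibrium.
\end{enumerate}
In your framework, the same conclusion would follow from Fenichel's theorem applied to a compact piece of the left branch that contains $\Gamma$ and extends slightly to the right of $v_e(c)$, where the reduced flow is inflowing at both ends. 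This gives an $\varepsilon$-independent neighbourhood of $\Gamma$ inside the basin. With that step your plan closes, and you never need to follow trajectories along the right branch and through the upper fold.
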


Other criteria for the global asymptotic stability of $(v_e(c),w_e(c))$ can be found in \cite{hayashi_global_1999,kaumann_uniqueness_1983,sugie_nonexistence_1991,kostovaFITZHUGHNAGUMOREVISITED2004}. We stress that our results, and in particular Item~\ref{lemma_item_global}, have the advantage of being easy to state and check and of covering the asymptotic regime where $\varepsilon$ is small. In contrast, other references provide more complete characterizations and cover all parameter ranges, but are correspondingly harder to verify in practice.

Note that $v_e(c)$ can be 
computed explicitly as the unique solution of a cubic equation. As a consequence, all the conditions involving $v_e(c)$ obtained later in the paper can be written explicitly in terms of the parameters of the system. 
\begin{figure}
  \centering
  \begin{tikzpicture} 
    \node[anchor=south west, inner sep=0cm] (image) at (0,0)
      {\includegraphics[width=0.6\linewidth]{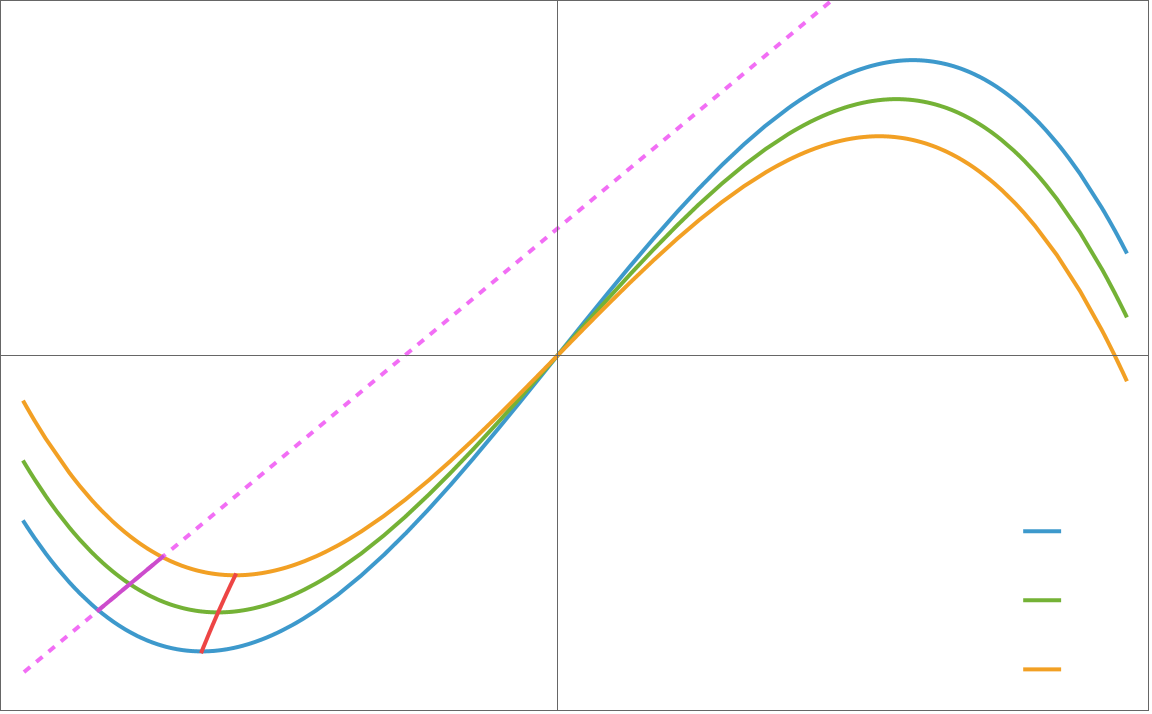}};
    % Define coordinate system based on image size
    \begin{scope}[x={(image.south east)}, y={(image.north west)}]
      %\draw[step=0.1, gray, very thin] (0,0) grid (1,1);
      %arrow and label Jm
      \draw[-{Latex[length=5mm,width=3mm]}, thick]
        (0.3,0.1) -- (0.2,0.13);
      \node at (0.33,0.1)  {\Large $J_m$};
      %arrow and label Je
      \draw[-{Latex[length=5mm,width=3mm]}, thick]
        (0.1,0.32) -- (0.12,0.2);
      \node[black] at (0.1,0.36)  {\Large $J_e$};
      % label for line \Lambda
      \node at (0.37,0.62)  {\Large $\Lambda$};
      %legends
      \node at (0.955,0.25)  {\Large $C_{\text{-} 1}$};
      \node at (0.95,0.15)  {\Large $C_{0}$};
      \node at (0.95,0.05)  {\Large $C_{1}$};
      %axis labels
      \node at (0.5,-0.03) {\Large $v$ axis};
      \node[rotate=90] at (-0.03, 0.5) {\Large $w$ axis};
    \end{scope}
  \end{tikzpicture}

  \caption{$v$-nullclines $C_c$ of the frozen system \eqref{eq-1} for three different values of $c$. The minimum of the cubics $C_c$ evolves monotonically with $c$ and belongs to the arc $J_m$. The segment $J_e$ contains the equilibria of \eqref{eq-1} for $c\in[-1,1]$. }
  \label{fig_cubics_monotonicity}
\end{figure}

The set of all possible equilibria for $c\in[-1,1]$ defines a segment 
\begin{align}\label{eq-Je}
    J_e:=\{(v_e(c),w_e(c))\mid c\in[-1,1]\},
\end{align}
which is contained in the line $\Lambda$ and  connects $(v_e(-1),w_e(-1))$ and $(v_e(1),w_e(1))$: see Figure \ref{fig_cubics_monotonicity}. Both components of this parameterization are monotonically increasing in $c$.

The following statement introduces parameter regions for the pair $(A, B)$ that guarantee specific stability properties of the equilibria, regardless of the considered value of $c$.

\begin{proposition}[Stability conditions regardless of $c$]\label{proposition_stability_frozen}
    Given any $\beta,\gamma,\varepsilon>0$, define the following parameter sets:
\begin{itemize}
    \item $\mathcal{E}_{\rm unique}$: the set of all pairs $(A,B)\in (0,+\infty)^2$ for which, given any $c\in [-1,1]$, system \eqref{eq-1} admits a unique equilibrium  $(v_e(c),w_e(c))$ in $\R^2$; 
    \item  $\mathcal E_{\rm LES}$: the set of all pairs $(A,B)\in \mathcal{E}_{\rm unique}$ for which, given any $c\in [-1,1]$, $(v_e(c),w_e(c))$ is locally exponentially stable for \eqref{eq-1} f; 
    \item $ \mathcal E_{\rm GES}$: the set of all pairs $(A,B)\in \mathcal{E}_{\rm unique}$ for which, given any $c\in [-1,1]$, $(v_e(c),w_e(c))$ is globally exponentially stable for \eqref{eq-1};
    \item $\mathcal E_{0}$: the set of all pairs $(A,B)\in \mathcal{E}_{\rm unique}$ for which $A+B<\sqrt{2}$ and, given any $c\in [-1,1]$, $v_e(c) < v_m(c)$.
\end{itemize}
Then $\mathcal{E}_{\rm unique}$ and $\mathcal{E}_0$ do not depend on $\varepsilon$ and we have the following:
\begin{align}
    \mathcal{E}_{\rm unique}&= \left\{(A,B)\in (0,+\infty)^2\mid (A-B)^2>2\left(1-\frac{1}{\gamma}-\left(\frac{9\beta^2}{4\gamma^2}\right)^{1/3}\right) \right\} \label{property_frozen1},\\
    \mathcal E_{\rm LES}&\supset \left\{(A,B)\in \mathcal{E}_{\rm unique}\mid r(-1)<v_e(1)^2+\min\{\varepsilon\gamma,1/\gamma\}\right\}\label{property_frozen2}.  
\end{align}
Moreover, for every $\beta,\gamma>0$, there exists $\varepsilon_0>0$ such that, for every 
$\varepsilon\in (0,\varepsilon_0)$,
$\mathcal{E}_0\subset \mathcal{E}_{\rm GES}$.
\end{proposition}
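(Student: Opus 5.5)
The proof reduces everything to the scalar function $r(c)$ from \eqref{eq-rc}, using two facts: $r$ is affine and decreasing in $c$, and the equilibrium set $J_e$ is parametrized monotonically by $c$. Neither the equilibria $(v_e(c),w_e(c))$, nor the points $(v_m(c),w_m(c))$, nor the condition $A+B<\sqrt2$ involve $\varepsilon$: the $\varepsilon$ in \eqref{eq-1} only scales $g_2$, which does not change the nullclines. Hence $\mathcal E_{\rm unique}$ and $\mathcal E_0$ are independent of $\varepsilon$.

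\textbf{Characterization of $\mathcal E_{\rm unique}$.} I would split on the sign of $r(c)$.
\begin{itemize}
\item If $r(c)\le 0$, the map $v\mapsto v(r(c)-v^2/3)$ is nonincreasing while $\Lambda$ has positive slope. So the nullclines meet exactly once and the equilibrium is unique. In this case $(r(c)-1/\gamma)^3<0<\frac94\frac{\beta^2}{\gamma^2}$ holds trivially.
\item If $r(c)>0$, Lemma~\ref{properties_equilibrium}\eqref{lemma_item_uniqueness} applies directly.
\end{itemize}
Thus in all cases uniqueness for a given $c$ is equivalent to $r(c)<\frac1\gamma+\left(\frac{9\beta^2}{4\gamma^2}\right)^{1/3}$. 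Since $r$ is decreasing, this holds for all $c\in[-1,1]$ iff it holds at $c=-1$. Using $r(-1)=1-\frac{(A-B)^2}{2}$ and rearranging gives exactly \eqref{property_frozen1}.

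\textbf{Inclusion for $\mathcal E_{\rm LES}$.} I would not invoke Lemma~\ref{properties_equilibrium}\eqref{lemma_item_iv}, because it assumes $r(c)>0$. Instead I compute the Jacobian of $G_c$ at the equilibrium directly:
\[
\begin{pmatrix} r(c)-v_e(c)^2 & -1\\ \varepsilon & -\varepsilon\gamma\end{pmatrix}.
\]
Its trace is negative iff $r(c)-v_e(c)^2<\varepsilon\gamma$. Its determinant $\varepsilon\bigl(1-\gamma(r(c)-v_e(c)^2)\bigr)$ is positive iff $r(c)-v_e(c)^2<1/\gamma$. So local exponential stability is equivalent to $r(c)-v_e(c)^2<\min\{\varepsilon\gamma,1/\gamma\}$, now for any sign of $r(c)$.

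Since $v_e$ is increasing in $c$ and negative, $v_e(c)^2\ge v_e(1)^2$. Also $r(c)\le r(-1)$. Hence $r(c)-v_e(c)^2\le r(-1)-v_e(1)^2$, and the hypothesis in \eqref{property_frozen2} gives the condition for every $c$ simultaneously.

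\textbf{The GES claim.} Take $(A,B)\in\mathcal E_0$. Since $A+B<\sqrt2$, we have $r(c)\ge r(1)>0$ for all $c$. Then $v_e(c)<v_m(c)=-\sqrt{r(c)}<0$ gives $v_e(c)^2>r(c)$, and together with uniqueness, Lemma~\ref{properties_equilibrium}\eqref{lemma_item_global} yields global exponential stability for $\varepsilon<\varepsilon_0(c)$.

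The main obstacle is that $\varepsilon_0$ must be uniform in $c$ and in the pair $(A,B)$, since the statement asks for a single $\varepsilon_0$ depending only on $\beta,\gamma$. My plan is to return to the proof of item~\eqref{lemma_item_global} in Appendix~\ref{sec-properties} and track how the threshold depends on the data. I expect it to depend on a positive margin such as $v_e(c)^2-r(c)$ (or $v_m(c)-v_e(c)$) together with bounds on $r(c)$.

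For fixed $(A,B)$, continuity in $c$ on the compact interval $[-1,1]$ and strict inequalities give a positive minimum margin, hence a uniform $\varepsilon_0$ for that pair. If full uniformity over $\mathcal E_0$ is really required, I would look for a singular-perturbation argument that uses only $r(c)\in(0,1]$ and the relative position $v_e<v_m$. In that argument, trajectories follow the slow left branch, or jump to it, and slide down to the equilibrium, since no fold point lies above it.

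If uniformity over $\mathcal E_0$ fails, the statement should be read with $\varepsilon_0$ depending on $(A,B)$ as well.
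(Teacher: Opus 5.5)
Your proposal follows the paper's proof: Lemma~\ref{properties_equilibrium} is applied for each fixed $c$ and combined with the monotonicity of $r$ and $v_e$ in $c$. Your separate treatment of $r(c)\le 0$, which Lemma~\ref{properties_equilibrium} formally excludes, and your direct Jacobian computation are refinements the paper skips.

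The uniformity issue you raise is real, and the paper does not address it; its proof only says the last claim ``follows immediately'' from Lemma~\ref{properties_equilibrium}-\ref{lemma_item_global}. In Appendix~\ref{sec-properties} the $\varepsilon$-threshold is governed by $M=(v_e(c)^2-r(c))^2/(2v_e(c)^2)$, which tends to $0$ at the boundary $v_e(c)=v_m(c)$ of $\mathcal E_0$. So that argument only yields an $\varepsilon_0$ depending on $(A,B)$, uniform in $c$ via the compactness argument you sketch. This pair-dependent form is the one actually needed, since every later result fixes $(A,B)$ before choosing $\varepsilon_0$.
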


\begin{proof}
    From Lemma \ref{properties_equilibrium}-\ref{lemma_item_uniqueness}, we know that $(v_e(c), w_e(c))$ is the unique equilibrium in $\R^2$ if and only if $\frac{9}{4}\frac{\beta^2}{\gamma^2}> (r(c)-1/\gamma)^3$. The monotonicity of $r(c)$ implies that the condition holds for all $c\in[-1,1]$ if it holds for $c=-1$, which yields \eqref{property_frozen1}. Inclusion \eqref{property_frozen2} is a  direct consequence of Lemma \ref{properties_equilibrium}-\ref{lemma_item_iv} and the monotonicity of $v_e(c)$ and $r(c)$.
    The last part of the statement follows immediately from the corresponding properties in Lemma \ref{properties_equilibrium}, since $v_m(c)^2 = r(c)$.
\end{proof}

\section{Necessary conditions for action potentials and tonic spiking}\label{sec-3}

In this section, we identify conditions under which no tonic spiking can occur, either under a general input (Section \ref{sec-3.1}) or under the smooth input arising from temporal interference stimulation (Section \ref{sec-3.2}). The corresponding proofs are reported in Section \ref{sec-proof-lemmas}.

\subsection{For a general input}\label{sec-3.1}

Based on the parameter sets introduced in Proposition \ref{proposition_stability_frozen}, the following result provides some insights into the dynamics of \eqref{general_switching_system} with a non-constant input $f$. It identifies parameter classes and initial states in which tonic spiking does not occur and highlights the potential for entrainment by a periodic input, as well as for confinement to specific sub-regions.

\begin{lemma}\label{lemma_general_properties}
Let $\varepsilon, \beta, \gamma,A, B  
>0$ and $f \in L^\infty([0,+\infty),[-1,1])$. The following properties hold: 
\begin{enumerate}[(i)]
    \item \label{lemma_preliminary_item1}
       Assume that $(A,B) \in \mathcal{E}_0$. If $f$ is non-constant and $T$-periodic for some $T>0$, then system \eqref{general_switching_system} has a non-constant $T$-periodic solution.
        In particular, if $T$ is small enough, then there exist initial conditions $(v_0,w_0)$ for which the corresponding solution of \eqref{general_switching_system} never emits action potentials. 
    \item \label{lemma_preliminary_item2}
        Assume that $(A,B)\in \mathcal{E}_{\rm LES}$ and consider any $\delta >0$. For every $\eta >0$ small enough and every $(v_0,w_0)$ with $|(v_0,w_0)-(v_e(f(0)),w_e(f(0)))|<\eta$,
        if   $f$ is $C^1$
         and $\|f'\|_\infty< \eta$, then  the solution of \eqref{general_switching_system} never emits action potentials and is quasi-static, in the sense that, for all $t\geq 0$, 
        \begin{equation}\label{eq:quasi-static}
          |(v(t),w(t)) - (v_e(f(t)),w_e(f(t)))| < \delta.
        \end{equation}
       If, moreover, $(A,B)\in \mathcal{E}_{\rm GES}$ then for every  $\eta >0$ small enough and every $(v_0,w_0)\in\mathbb R^2$, if $f$ is $C^1$ and       $\|f'\|_\infty< \eta$, then inequality \eqref{eq:quasi-static} holds for all $t$ large enough. 
     
    \item  \label{lemma_preliminary_item4} 
        Let $\varphi$ be a 1-periodic function and $\bar \varphi:=\int_0^1 \varphi(s)ds$. 
        Assume that system \eqref{eq-1} with $c=\bar\varphi$ has a unique equilibrium $(v_e(\bar\varphi), w_e(\bar\varphi))$ and that the latter is locally exponentially stable.
        Then,  given $\delta >0$, there exists $\delta_0 >0$ such that for every $\eta$ large enough and every $(v_0,w_0)$ with $|(v_0,w_0) - \left(v_e(\bar \varphi),w_e(\bar \varphi)\right)| <\delta_0$, the solution $(v,w)$ of \eqref{general_switching_system} with $f(t) = \varphi(\eta t)$ never emits action potentials and is quasi-static, in the sense that, for all $t\geq 0$, 
    \begin{equation}\label{eq:quasi-static-av}
    |(v(t),w(t)) - (v_e(\bar \varphi),w_e(\bar \varphi))| < \delta.
    \end{equation}
        If, moreover, $(v_e(\bar\varphi),w_e(\bar\varphi))$ is globally exponentially stable for \eqref{eq-1}
        then, for every $\eta >0$ large enough and every $(v_0,w_0)\in\mathbb R^2$,  inequality 
        \eqref{eq:quasi-static-av} holds for all $t$ large enough. 
    \end{enumerate}
\end{lemma}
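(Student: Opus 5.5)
We treat the three items separately, since they rest on three different mechanisms: a fixed-point argument for the Poincaré map, slow-variation (tracking) estimates, and classical averaging.

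\emph{Item \ref{lemma_preliminary_item1}.} We apply Brouwer's fixed-point theorem to the time-$T$ (Poincaré) map $P$ of the $T$-periodic system \eqref{general_switching_system}. By Fact~\ref{fact1}, $P$ maps the rectangle $[-L,L]\times[-S,S]$ into itself. It is continuous by continuous dependence on initial data, which holds since the vector field is locally Lipschitz in the state and $L^\infty$ in time. Hence $P$ has a fixed point, which generates a $T$-periodic solution. This solution is non-constant. Indeed, a constant solution $(v^*,w^*)$ would require $g_2(v^*,w^*)=0$ and $g_1(v^*,w^*,f(t))=0$ for a.e.\ $t$. Since $c\mapsto g_1$ is strictly monotone when $v^*\neq0$ and $f$ is non-constant, this forces $v^*=0$, hence $w^*=0$ from $g_1$. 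But then $g_2=\beta\ne0$, a contradiction.

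For the second claim we need a periodic orbit staying in $\{v<0\}$. We take instead the invariant region $K$ bounded by $\Lambda$, the cubic $C_{1}$ on the left branch, and a vertical segment $v=\bar v<0$ with $\bar v$ slightly to the right of $J_e$. Under $\mathcal E_0$ we have $v_e(c)<v_m(c)$, so the equilibria lie on the left decreasing branch, strictly left of the minima. For $T$ small, we then show that a small neighbourhood of $J_e$, e.g.\ a thin box around the segment, is forward invariant over one period; equivalently, the averaged system near $(v_e(\bar f),w_e(\bar f))$ is attracting. Applying Brouwer to $P$ restricted to that box gives a periodic orbit with $v<0$, hence no action potentials. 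The main obstacle is making this invariant box rigorous. My first attempt would be to use Item~\ref{lemma_preliminary_item4}'s averaging estimate with $\eta=2\pi/T$ large, which gives a neighbourhood of the averaged equilibrium mapped into itself by $P$.

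\emph{Item \ref{lemma_preliminary_item2}.} This is a slowly varying (adiabatic) system argument. Since $\mathcal E_{\rm LES}$ gives local exponential stability for each $c\in[-1,1]$, and the Jacobian depends continuously on $c$ over a compact set, we get a uniform Lyapunov function. Concretely, we take quadratic forms $Q_c$ solving Lyapunov equations for the linearization at $(v_e(c),w_e(c))$, depending smoothly on $c$ (the equilibrium is hyperbolic, so $c\mapsto v_e(c)$ is smooth by the implicit function theorem). These satisfy $\dot Q_c\le-\alpha Q_c$ on a uniform ball of radius $r_0$. Setting $V(t,x)=Q_{f(t)}(x-x_e(f(t)))$, the time variation contributes a term of order $|f'|\sqrt{V}$. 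Hence, for $\|f'\|_\infty<\eta$, the sublevel set $\{V\le \kappa\eta^2\}$ (suitably scaled, contained in the $\delta$-ball) is forward invariant. Since $v_e<0$ uniformly, choosing $\delta<\min_c|v_e(c)|$ also excludes action potentials. For the $\mathcal E_{\rm GES}$ part, we first show that trajectories enter the uniform basin in bounded time. A compactness argument on the frozen flows over $[-1,1]$ gives a uniform entry time $\tau$ into the $r_0/2$-ball for all initial data in the rectangle of Fact~\ref{fact1}. Over a window of length $\tau$ the input changes by at most $\eta\tau$, so the solution stays close to the frozen one, and the local argument then applies.

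\emph{Item \ref{lemma_preliminary_item4}.} We rescale time by $s=\eta t$. This produces a system of the form $x'=\eta^{-1}F(x,\varphi(s))$, which is linear in $\varphi$, so its average is exactly the frozen system with $c=\bar\varphi$. The standard first-order averaging theorem for a hyperbolic exponentially stable equilibrium then gives an exponentially stable periodic solution within $O(1/\eta)$ of the averaged equilibrium, together with an $O(1/\eta)$ tracking estimate valid for all times, uniformly in a $\delta_0$-neighbourhood. Taking $\eta$ large makes this error smaller than $\delta$ and keeps $v<0$. For the global statement, we again combine this with a uniform entrance time into the basin, using Fact~\ref{fact1} and averaging on finite intervals.
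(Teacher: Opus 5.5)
Your arguments for items~\ref{lemma_preliminary_item2} and \ref{lemma_preliminary_item4}, and for the existence of a non-constant $T$-periodic solution in item~\ref{lemma_preliminary_item1}, follow the paper's route. That route is Brouwer on the time-$T$ map of the rectangle $R_L=[-L,L]\times[-S,S]$ of Fact~\ref{fact1}, a Lyapunov function depending smoothly on $c$ for the slowly varying system, and first-order averaging; your finite-interval averaging plays the role of the semiglobal practical stability result the paper cites. Your non-constancy argument is correct.

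The gap is the second assertion of item~\ref{lemma_preliminary_item1}, which you leave open. None of the routes you sketch closes it:
\begin{itemize}
\item \emph{The region $K$ does not exist as described.} To the left of $(v_e(1),w_e(1))$ the line $\Lambda$ goes down while the left branch of $C_1$ goes up. With a vertical segment at $\bar v>v_e(1)$ they only enclose a small curvilinear triangle to the right of $(v_e(1),w_e(1))$, which does not contain $J_e$.
\item \emph{A thin box around $J_e$ is not mapped into itself by the period map $P$.} For $T$ small, $P(x)=x+T\,G_{\bar f}(x)+O(T^2)$ with $\bar f=\frac1T\int_0^T f$. At $(v_e(c'),w_e(c'))$ with $c'\neq\bar f$, the field $G_{\bar f}$ is essentially horizontal, of size comparable to $|c'-\bar f|$, hence transversal to $\Lambda\supset J_e$. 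It therefore pushes points of the long sides of the box outward.
\item \emph{Item~\ref{lemma_preliminary_item4} does not fill the hole.} As stated, it only sends the $\delta_0$-ball around the averaged equilibrium into the larger $\delta$-ball, so it gives no self-map for Brouwer. More importantly, its threshold on $\eta$ is for a fixed profile $\varphi$. ``$T$ small enough'' must instead be a threshold depending only on $A,B,\beta,\gamma,\varepsilon$, valid for every non-constant $T$-periodic $f$ with values in $[-1,1]$; this uniformity is what the paper proves.
\end{itemize}

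The missing idea is that no small invariant set is needed: the periodic orbit you already obtained from Brouwer on $R_L$ is automatically confined near $J_e$ when $T$ is small.
\begin{enumerate}
\item The orbit lies in $R_L$, where the right-hand side of \eqref{general_switching_system} is bounded by a constant $K$ independent of $f$ and $T$. So its diameter is at most $KT$.
\item Since $\int_0^T\dot v\,dt=\int_0^T\dot w\,dt=0$, both $g_2$ and $g_1(\cdot,f(t))$ take nonnegative and nonpositive values along the orbit.
\item By connectedness, the orbit therefore meets $\Lambda$ and the closed band $\bigcup_{c\in[-1,1]}C_c$. Each of its points is then within $KT$ of both sets.
\item Because $(A,B)\in\mathcal{E}_0\subset\mathcal{E}_{\rm unique}$ and $\beta>0$, $\Lambda$ meets this band exactly in $J_e$.
\item By compactness of $R_L$, for $T$ below a threshold independent of $f$ the orbit lies in any prescribed neighbourhood of $J_e\subset\{v<0\}$, so it never emits action potentials.
\end{enumerate}
Your averaging idea can also be made uniform in $f$. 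The system is affine in $f$ with $|f|\le 1$, and under $\mathcal{E}_0$ the inequality $r(c)-v_e(c)^2<0$ yields exponential stability of $(v_e(c),w_e(c))$ uniformly in $c\in[-1,1]$. However, that uniform averaging estimate has to be proved; it cannot be quoted from item~\ref{lemma_preliminary_item4}.
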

For clarity of presentation, we postpone the proof of this result to Appendix~\ref{appendix_proof_lemma_general_properties}. It relies on converse Lyapunov theorems and averaging techniques. 

Based on the facts presented in the lemma above, we can refine our expectations regarding the system's dynamics. To guarantee tonic spiking with a periodic input, the excitation $f$ should neither vary too fast (item \ref{lemma_preliminary_item4}) nor too smoothly and slowly (item \ref{lemma_preliminary_item2}). 

Our positive results are actually obtained, under further geometric conditions, either for $f(t)=\sign(\cos(\eta t))$ with $\eta$ and $\varepsilon$ small or for $f(t)=\cos(\eta t)$ in the asymptotic regime where $\eta$ is picked proportional to $\varepsilon$ and $\varepsilon$ is small.
In the first case, we move beyond the situation studied in item \ref{lemma_preliminary_item2} by allowing discontinuities in $f$, while in the second case, we let $\eta$ be small, but not with respect to $\varepsilon$, so, again, item~\ref{lemma_preliminary_item2} does not apply.

Even when action potentials can be generated, there is no guarantee that the considered input will trigger tonic spiking. The following result presents a necessary condition for the existence of repeated action potentials. 

\begin{theorem}[Necessary condition for tonic spiking]\label{thm_geometric_condition}
Let 
$ \beta, \gamma >0$ and $(A,B)\in \mathcal{E}_0$.
If the geometric condition 
\begin{equation}\label{eq:no-geom_condition}
\quad w_e(-1)>w_m(1)
\end{equation}
is satisfied, then there exists $\varepsilon_0>0$ such that, for every  $\varepsilon\in (0,\varepsilon_0)$ and every $f\in$ $L^\infty([0,+\infty),$ $[-1,1])$, 
no solution $(v,w)$ of system \eqref{general_switching_system} 
undergoes tonic spiking.
\end{theorem}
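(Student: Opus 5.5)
The argument is a trapping-region construction in the slow--fast regime. Set $w_*:=\min_{c\in[-1,1]} w_e(c)=w_e(-1)$, using the monotonicity of $c\mapsto w_e(c)$. By \eqref{eq:no-geom_condition} there is a level $\bar w$ with $w_m(1)<\bar w<w_e(-1)$. Since $w_m(\cdot)$ is increasing, $w_m(c)\le w_m(1)<\bar w$ for every $c\in[-1,1]$. Hence for every frozen value $c$, the horizontal line $w=\bar w$ meets the left branch of $C_c$ (the branch with $v<v_m(c)<0$) strictly above its local minimum.

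The goal is to show that, after a finite transient, every solution enters and stays in the region
\[
R:=\{(v,w)\mid w\ge \bar w,\ v\le \hat v(w)\},
\]
where $\hat v(w)<0$ is chosen a little to the right of all the left branches. A convenient choice is $\hat v(w)=$ the largest $v<v_m(1)$ with $w=v(r(1)-v^2/3)+\kappa$, for some small $\kappa>0$, intersected with a vertical bound $v\le -\mu<0$. Once the solution stays in $R$, we have $v<0$ forever, so it emits no further action potentials and tonic spiking fails.

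The steps are as follows.

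First, on $\{w=\bar w\}$ with $v$ on or near the left branches, $\dot w=\varepsilon(v-\gamma\bar w+\beta)$. This is positive because the point lies to the left of $\Lambda$: $\bar w<w_e(-1)$, and the left branches lie to the left of $\Lambda$ below the equilibria. So the bottom edge cannot be crossed downward.

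Second, on the right boundary $v=\hat v(w)$ we need $\dot v=g_1(v,w,f(t))<0$ for all $c$. This follows from the ordering of the cubics recalled in Section \ref{key_objects}: for $v<0$, $C_1$ lies above every $C_c$, so $g_1(v,w,c)\le g_1(v,w,1)<0$ when $w$ exceeds $C_1$ by $\kappa$. The motion along this curve is $O(\varepsilon)$ in $w$ while $\dot v\le -\kappa$. Therefore the transversality holds for $\varepsilon$ small, provided the curve is steep enough, that is, kept away from the fold $v_m(1)$. That is exactly why we need the margin $w_m(1)<\bar w$.

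Third, we check that every solution reaches $R$ in finite time. By Fact \ref{fact1}, solutions enter a compact rectangle. Standard singular-perturbation (Fenichel/Tikhonov-type) reasoning, uniform in $c$ by compactness and the monotone ordering of the cubics, then applies. A solution is either on the left slow branch region with $\dot w>0$ or $w$ decreasing toward $\bar w$, or it is in the fast regime and jumps horizontally, in time $O(1)$, to a left or right branch. On the right side ($v>0$), we have $\dot w\ge \varepsilon\,\delta_0>0$ because the right branches lie right of $\Lambda$: the equilibrium is unique and located on the left, given $(A,B)\in\mathcal E_{\rm unique}$ and $v_e(c)<v_m(c)$. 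So $w$ increases at rate $\varepsilon$ until it passes the local maxima $-w_m(-1)$. It then jumps to the left, landing at height above $-w_m(-1)>0>\ldots\ge\bar w$, hence inside $R$.

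The main obstacle is this third step. Because $f$ is an arbitrary $L^\infty$ signal, one cannot follow a single slow manifold. Instead, the argument must use a time-independent comparison: the sign conditions $g_1(\cdot,\cdot,c)$ bounded by $g_1(\cdot,\cdot,\pm1)$ on each side of $v=0$, together with the lower bound on $\dot w$ in the region $\{v\ge -\mu\}\cap\{w\le -w_m(-1)+1\}$. This bound comes from that region lying strictly right of $\Lambda$ for $\mu$ small, since $v_e(1)<v_m(1)<0$. I would first try to prove a uniform lemma: outside an $O(1)$ neighborhood of the union of left branches, either $|\dot v|\ge k>0$ or $\dot w\ge \varepsilon k$. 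Then I would use the bounded rectangle to show that the time spent outside $R$ is finite, with only finitely many possible excursions to $v\ge0$.
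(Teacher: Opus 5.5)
Your overall plan matches the paper's: trap solutions near the left branches of the cubics above the level $w_m(1)$, where $v<v_m(1)<0$, and show that every solution gets there. However, the trap $R$ you write down is not forward invariant, and Step~1 is where it fails.

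The bottom edge $\{w=\bar w,\ v\le\hat v(\bar w)\}$ is a half-line reaching $v=-\infty$. It therefore crosses $\Lambda$ at $v_\Lambda:=\gamma\bar w-\beta$, and $v_\Lambda<\gamma w_e(-1)-\beta=v_e(-1)$. At points of the edge with $v<v_\Lambda$ you have $g_2(v,\bar w)<0$. Such points also lie to the left of every left branch, so $g_1(v,\bar w,c)\ge g_1(v,\bar w,-1)>0$. Hence every $G_c$ points strictly down and to the right there, and the solution leaves $R$ through the bottom edge.

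Your justification also has the sign backwards. One has $\dot w<0$ to the left of (i.e.\ above) $\Lambda$. The left branches at height $\bar w$ have $\dot w>0$ precisely because, below the equilibria, they lie to the \emph{right} of $\Lambda$: a decreasing branch meets the increasing line $\Lambda$ only at $(v_e(c),w_e(c))$. So Step~1 only holds on $[v_\Lambda,\hat v(\bar w)]\times\{\bar w\}$.

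Step~2 is fine. For $v<0$ one has $g_1(\cdot,\cdot,c)\le g_1(\cdot,\cdot,1)$, your curve stays away from the fold, and on its unbounded part $g_2<0$, which only helps.

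What is missing is a left boundary whose lower corner is inward-pointing for all $c\in[-1,1]$ and all small $\varepsilon$ simultaneously. This is the delicate point of Lemma~\ref{lemma_geometric_condition}. There the region is closed on the left by a vertical segment $\{v=p\}$, on which $g_1(\cdot,\cdot,c)>0$ for every $c$. The part of the bottom edge lying left of $\Lambda$ is replaced by an arc of an integral curve of $G_{-1}$ computed at a fixed $\varepsilon_0$. Inward pointing for all $c$ and all $\varepsilon<\varepsilon_0$ follows because, where $g_1>0>g_2$, the slope $\varepsilon g_2/g_1(\cdot,\cdot,c)$ increases with $c$ (for $v<0$) and as $\varepsilon$ decreases. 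It is therefore never below the arc's slope $\varepsilon_0 g_2/g_1(\cdot,\cdot,-1)$.

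Since your region is unbounded above, a vertical wall cannot go all the way up. An alternative repair is a small left translate of the left branch of $C_{-1}$, as with $\Gamma_1$ in the proof of Lemma~\ref{lem_region_many_visits}. Choose it close enough that it meets $\{w=\bar w\}$ to the right of $\Lambda$, and use the slope comparison given there.

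Step~3 remains a sketch. Fenichel/Tikhonov theory does not apply off the shelf to an arbitrary $L^\infty$ input. The paper replaces it with the time-independent region-trimming argument of Lemma~\ref{lem_region_many_visits}, which is essentially your proposed uniform lemma. Two details in your sketch need correcting:

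First, the bound $\dot w\ge\varepsilon k$ holds near the middle and right branches, which lie strictly right of $\Lambda$ by uniqueness of the equilibrium. It does not hold on the whole strip $\{v\ge-\mu\}\cap\{w\le -w_m(-1)+1\}$, which can reach the left of $\Lambda$.

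Second, with arbitrary $f$, a jump back from the right branches can occur as soon as $w$ exceeds the lowest fold level $-w_m(1)$, not $-w_m(-1)$. So you should choose $\bar w<0$, which is possible since $w_m(1)<0$.
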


Under the geometric condition \eqref{eq:no-geom_condition}, this result states that no exogenous input can trigger tonic spiking if the recovery variable $w$ evolves too slowly with respect to the membrane potential $v$.

In view of the definition of $v_m$ and $w_m$ in \eqref{eq-vm-wm}, condition \eqref{eq:no-geom_condition} can be equivalently stated as the following constraint on the equilibria of the frozen system \eqref{eq-1} for $c=-1$:
\begin{align*}
    w_e(-1)=\frac{v_e(-1)+\beta}{\gamma}>-\frac{2}{3}\left(1-\frac{1}{2}(A+B)^2\right)^{3/2}.
\end{align*}

Condition \eqref{eq:no-geom_condition} together with the requirement that $(A, B)$ belongs to $\mathcal{E}_0$ translates into assuming that the leftmost equilibrium $(v_e(-1),w_e(-1))$ is above and on the left of the point $(v_m(1),w_m(1))$, which is the highest minimum of the $v$-nullclines $C_c$.

The proof of Theorem \ref{thm_geometric_condition} relies on the two following lemmas. The first one establishes that, for $\varepsilon>0$ small enough, every trajectory of \eqref{general_switching_system} repeatedly approaches the segment of equilibria $J_e$ introduced in \eqref{eq-Je}. Its proof is reported in Section \ref{proof-lem_region_many_visits}.

\begin{lemma}[A region with infinitely many visits]\label{lem_region_many_visits}
Let $f\in L^\infty([0,+\infty), [-1,1])$, $ \beta, \gamma >0$, and $(A,B)\in \mathcal{E}_0$.
Let $\Omega$ be a neighborhood of $J_e=\{(v_e(c),w_e(c))\mid c\in[-1,1]\}$. 
Then there exits $\varepsilon_0>0$ such that for every $\varepsilon\in (0,\varepsilon_0)$ every trajectory $(v,w)$ of \eqref{general_switching_system} admits a sequence of positive times $\{t_k\}_{k\geq 0}$ with $t_k \to +\infty$ so that $\left(v(t_k), w(t_k)\right) \in \Omega$ for every $k\ge 0$.   
\end{lemma}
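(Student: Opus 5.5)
The plan is a slow--fast (geometric singular perturbation) argument by contradiction. Suppose that some trajectory stays in the complement of $\Omega$ for all $t\ge T$. By Fact~\ref{fact1} we may assume that it lies in the compact rectangle $R=[-L,L]\times[-S,S]$. We then partition $R$ into finitely many regions and show that, when $\varepsilon$ is small, no region can host the trajectory forever and the forced passages between regions eventually lead into $\Omega$. The constants are uniform in $f$, because every estimate only uses the bounds $c\in[-1,1]$ together with the monotonicity of $c\mapsto g_1(v,w,c)$ and of $c\mapsto r(c)$.

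\emph{Step 1 (the left attracting band).} Since $(A,B)\in\mathcal E_0$, we have $v_e(c)<v_m(c)=-\sqrt{r(c)}$ for all $c$. We fix $v^*<0$ with $\max_c v_e(c)<v^*<v_m(1)$, so that $\partial_v g_1=r(c)-v^2\le -\kappa<0$ for $v\le v^*$ and all $c$. Let $\mathcal B$ be the band of points with $v\le v^*$ lying between the left branches of $C_{-1}$ and $C_1$. Because $\partial_v g_1\le-\kappa$ there, the fast equation $\dot v=g_1$ contracts in $v$ uniformly in $c$: a point to the left of the band has $\dot v\ge\kappa'>0$, and a point to the right (still with $v\le v^*$) has $\dot v\le -\kappa'$. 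Hence a suitable $O(\sqrt\varepsilon)$-enlargement $\mathcal B_\varepsilon$ of $\mathcal B$ is forward invariant as long as $v\le v^*$. By monotonicity in $c$, every point of $\mathcal B$ lies on some $C_c$. Therefore $\Lambda\cap\mathcal B$ is exactly the part of $J_e$, and on $\mathcal B\setminus\Omega$ the function $g_2$ has a sign: negative above $\Lambda$ and positive below it, bounded away from $0$ by compactness. It follows that inside $\mathcal B_\varepsilon\setminus\Omega$ the variable $w$ moves monotonically toward $\Lambda$ at speed at least $\varepsilon\mu$, and reaches $\Omega$ in time $O(1/\varepsilon)$, provided it does not leave through the lower knee $\{v=v^*\}$.

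\emph{Step 2 (the other regions).} Off the band and away from all nullclines, $|\dot v|\ge\kappa''$ while $|\dot w|=O(\varepsilon)$. So within time $O(1)$, with $O(\varepsilon)$ change in $w$, the trajectory enters either $\mathcal B_\varepsilon$ or the analogous right band around the right branches. On the right band we have $v>0>-\beta$ up to controlled terms, so $g_2\ge\mu>0$ and $w$ increases until it passes the highest local maximum $-w_m(-1)$. The fast flow then throws the trajectory to the left, and it lands in $\mathcal B_\varepsilon$ at height $w\approx -w_m(\cdot)>0$. This height is far above $J_e$, because $w_e(c)<0$.

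\emph{Step 3 (main obstacle).} The delicate point is to exclude the trajectory leaving $\mathcal B_\varepsilon$ downward through the knees without visiting $\Omega$, or cycling forever through the middle region. Below $J_e$ on the band we have $g_2>0$, so $w$ increases and the trajectory moves away from the knees, which is the favorable direction. Above $J_e$, $w$ decreases. Since the left branch of each $C_c$ is strictly decreasing for $v\le v^*$ and $w_e(c)>w_m(c)$, a trajectory descending along the band must meet $\Lambda$ at a point where $v\le v^*$, i.e.\ in $J_e\subset\Omega$, before any knee is reached. To make this rigorous uniformly in the arbitrary input $f$, I would use the Lyapunov-type function $\ell(v,w)=|w-\pi(v,w)|$, where $\pi$ is the $w$-coordinate of the point of $\Lambda$ lying in the band at the same level. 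I would show that $\ell$ decreases at rate $\gtrsim\varepsilon$ along $\mathcal B_\varepsilon\setminus\Omega$, absorbing the $O(\sqrt\varepsilon)$ thickening errors by choosing $\varepsilon_0$ small relative to $\mathrm{dist}(J_e,\partial\Omega)$. Combining the three steps, from any state the trajectory enters $\Omega$ within a uniform time $\tau(\varepsilon)$. Restarting the argument at times $t_k+1$ produces the sequence $t_k\to+\infty$.
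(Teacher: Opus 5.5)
Your funnel strategy (absorbing rectangle, fast horizontal motion off the cubics, slow monotone drift of $w$ toward $\Lambda$ inside the attracting band) is the mechanism the paper uses. However, Steps 1--3 rest on claims that fail under $(A,B)\in\mathcal E_0$.

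\emph{Step 1.} Requiring $r(c)-v^2\le-\kappa$ for all $c\in[-1,1]$ and all $v\le v^*$ forces $v^*<v_m(-1)=-\sqrt{r(-1)}$, the \emph{leftmost} knee, not merely $v^*<v_m(1)$. Combined with $v^*>\max_c v_e(c)=v_e(1)$, this requires $v_e(1)<v_m(-1)$, whereas $\mathcal E_0$ only gives $v_e(c)<v_m(c)$ for each $c$ separately. For instance, the parameters of Experiment~2(d)--(f), $A=B=0.3$, $\gamma=0.6$, $\beta=0.7$, belong to $\mathcal E_0$ and give $v_e(1)\approx-0.993>-1=v_m(-1)$. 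Your band $\{v\le v^*\}$ then contains the fold of $C_{-1}$ and an arc of its middle branch, which is repelling for the fast dynamics. So neither the uniform contraction nor the forward invariance of $\mathcal B_\varepsilon$ holds there, and the claim in Step~3 that each $C_c$ is strictly decreasing on $\{v\le v^*\}$ is false.

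\emph{Step 2.} The dichotomy (fast motion off the band, or entry into one of the two attracting bands) is incomplete. Since $f$ is arbitrary, solutions can move on the slow time scale anywhere in $\bigcup_c C_c$, including its middle part: choosing $f(t)=\mathfrak c(v_0,w(t))$ freezes $v\equiv v_0$. The ``cycling through the middle region'' you mention is therefore never actually excluded.

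\emph{Step 3.} $\ell$ is not a usable Lyapunov function. Read literally, the point of $\Lambda$ at the same height has $w$-coordinate $w$, so $\ell\equiv0$. Read as $\ell=|w-w_e(\mathfrak c(v,w))|$, it is not monotone. Along solutions $\frac{d}{dt}\mathfrak c=(v^2-r(\mathfrak c))(f(t)-\mathfrak c)+O(\varepsilon)$, so $\mathfrak c$ tracks $f$ at an $O(1)$ rate, and a drop of $f$ makes $\ell$ increase at an $O(1)$ rate above $\Lambda$.

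The missing observation, which makes any vertical cutoff unnecessary, is this: by uniqueness of the equilibrium, $\Lambda$ lies below $C_c$ for $v<v_e(c)$ and above it for $v>v_e(c)$. Hence the part of $\bigcup_c C_c\cap\{v<0\}$ above $\Lambda$ is $\{(v,r(c)v-v^3/3): c\in[-1,1],\ v<v_e(c)\}$. It lies on strictly decreasing branches automatically (as $v_e(c)<v_m(c)$), and it is bounded by $J_e$ and by the arcs of $C_{-1}$ and $C_1$ to the left of $v_e(-1)$ and $v_e(1)$. Translating these two arcs outward by a fixed $\delta$ and taking $\varepsilon$ small makes every $G_c$ point inward: the arcs have slopes bounded away from $0$, while $G_c$ has slope $O(\varepsilon/\delta)$ there. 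Then $w$ itself, not $\ell$, is the monotone quantity, so solutions can only leave near $J_e$.

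On the middle and right branches $g_2>0$ by the same uniqueness argument; this, not $v>0>-\beta$, is the reason $g_2>0$ on your right band. More generally, $g_2$ is bounded below by a positive constant below a small right translate of $\Lambda$. So no invariance is needed there: in the absorbing rectangle, solutions leave that region within time $O(1/\varepsilon)$.

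This is exactly the paper's construction ($R_1$, $R_3$, $\Omega_*$). The paper packages it as a successive trimming of regions of repeated visits, which avoids having to prove your uniform hitting time $\tau(\varepsilon)$.

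A minor point: $w_e(c)<0$ holds only when $\beta<\sqrt{3r(c)}$, which $\mathcal E_0$ does not impose, so a landing after a spike may occur below $\Lambda$. With the corrected argument this is harmless, since the solution then climbs toward $J_e$ along decreasing branches.
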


The second result, proved in Section \ref{proof-lemma_geometric_condition}, shows that the geometric condition \eqref{eq:no-geom_condition} ensures that all solutions starting close to $J_e$ remain with a negative value at all times.

\begin{lemma}\label{lemma_geometric_condition}
Let $\beta ,\gamma>0$ and $(A,B)\in \mathcal{E}_{0}$. 
Suppose that condition \eqref{eq:no-geom_condition} holds. Then there exist $\varepsilon_0>0$ 
and a neighborhood $\Omega$ of $J_e$ 
such that for every $\varepsilon\in 
(0,\varepsilon_0)$, every initial condition $(v_0,w_0)\in \Omega$, and every $f\in L^\infty([0,+\infty),[-1,1])$, the corresponding solution $(v,w)$ of \eqref{general_switching_system} satisfies $v(t) <v_m(1)$ for all $t\in [0,+\infty)$. 
\end{lemma}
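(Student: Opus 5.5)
We construct a forward-invariant region that contains a neighborhood of $J_e$ and lies entirely in $\{v<v_m(1)\}$, uniformly in $f$ and for all small $\varepsilon$. The geometric picture is the following. Condition \eqref{eq:no-geom_condition} says that every point of $J_e$ has $w\ge w_e(-1)>w_m(1)\ge w_m(c)$ for all $c$. Membership in $\mathcal E_0$ gives $v_e(c)<v_m(c)\le v_m(1)$. So $J_e$ sits strictly above the highest local minimum of all cubics $C_c$ and strictly to the left of all the minima. In that zone, each cubic $C_c$ restricted to $\{v<v_m(c)\}$ is a decreasing graph $w=h_c(v)$. On its left lies the left (stable) branch of the fast dynamics.

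\textbf{Step 1: the trapping region.} Fix $w_*$ with $w_m(1)<w_*<w_e(-1)$. Define
\[
R:=\{(v,w)\mid w\ge w_*,\ v\le \bar v\},
\]
with $\bar v<v_m(1)$ chosen close to $v_m(1)$. Also intersect $R$ with the rectangle of Fact~\ref{fact1} so that it is bounded. We need the following sign conditions.
\begin{itemize}
\item On the right edge $\{v=\bar v,\ w\ge w_*\}$ we need $\dot v<0$ for every $c\in[-1,1]$. This amounts to $w>\bar v\,r(c)-\bar v^3/3$. Since $w_*>w_m(1)$, we pick $\bar v$ so close to $v_m(1)$ that $\max_c\big(\bar v r(c)-\bar v^3/3\big)<w_*$. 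The maximum over $c$ is attained at $c=1$ because $\bar v<0$, and it equals $h_1(\bar v)$, which tends to $w_m(1)<w_*$ as $\bar v\to v_m(1)$. This gives $\dot v\le-\kappa<0$ on that edge, uniformly in $f$.
\item The bottom edge $\{w=w_*\}$ is the delicate part. There, $\dot w=\varepsilon(v-\gamma w_*+\beta)$ is positive exactly when $v>\gamma w_*-\beta=:v_*$. Note that $v_*<v_e(-1)$, since $w_*<w_e(-1)$ and $\Lambda$ is increasing. So $\dot w$ points downward on the portion with $v<v_*$.
\end{itemize}

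\textbf{Step 2: fixing the bottom edge.} To handle the portion of the bottom edge with $v<v_*$, we replace the flat bottom by a curve that lies above the left branches of the cubics. On the left branch region, the fast variable is attracted to $\{v\le h^{-1}\}$. We use the boundary
\[
w=\max_c h_c(v)+\theta=h_1(v)+\theta\quad\text{for }v\le v_1,
\]
joined to the flat piece $w=w_*$ for $v\in[v_1,\bar v]$. The small $\theta>0$ is chosen so that the joint is consistent, i.e.\ $h_1(v_1)+\theta=w_*$. On the curved part the point lies above every cubic, so $\dot v<0$ with a margin of order $\theta$, uniformly in $c$. The velocity is then $(\dot v,\dot w)=(-O(1),O(\varepsilon))$. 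Because $h_1$ is decreasing there, the outward normal points down-right. Hence the inner product with the outward normal is $\le -c_1\theta+C\varepsilon<0$ once $\varepsilon<\varepsilon_0$.

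On the flat part we need $v_1>v_*$. This holds if $\theta$ is small, since $h_1(v_*)>w_*$ would otherwise be violated. Indeed, $(v_*,w_*)$ lies on $\Lambda$ below $J_e$, and we need $h_1(v_*)<w_*$ minus a margin. This is the main obstacle: we must ensure the point where $\Lambda$ crosses height $w_*$ lies to the right of $C_1$'s left branch. I would try to derive this from \eqref{eq:no-geom_condition} together with $v_e(-1)<v_m(-1)$. The argument runs as follows. At $w_e(-1)$, the equilibrium lies on $C_{-1}$, which is to the right of $C_1$ on the left branch. Moving down along $\Lambda$ to $w_*$, the point $(v,w)$ moves left and down. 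If it crosses $C_1$ before reaching $w_*$, I would instead choose $w_*$ closer to $w_m(1)$ and use a slanted bottom, or compensate with $\dot v<0$ being $O(1)$ on the bottom edge once it lies above the cubic. Indeed, wherever the flat bottom is above $h_1$ we are already in the curved regime, so the only required region of the flat part is $v\in(v_1,\bar v)$ with $v_1\ge$ the crossing point of $w_*$ with $h_1$. The required inequality $v_1>v_*$ then follows if $\Lambda$ at height $w_*$ is right of $C_1$. This is checked by continuity as $w_*\downarrow w_m(1)$, using $v_e(1)<v_m(1)$ and the ordering of $J_e$.

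\textbf{Step 3: conclusion.} The resulting region $\Omega^+$ is forward invariant for every $f$ and every $\varepsilon<\varepsilon_0$. It contains $J_e$ in its interior because $J_e$ lies above $w_*$ and left of $\bar v$, and any point of $J_e$ on the curve side is above $h_1+\theta$ for small $\theta$. Take $\Omega$ to be the interior of $\Omega^+$. Then every solution starting in $\Omega$ satisfies $v(t)\le\bar v<v_m(1)$ for all $t\ge0$.
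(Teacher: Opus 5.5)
Your Step 1 is correct: the right edge $\{v=\bar v,\ w\ge w_*\}$ works uniformly in $c$, as you say. The gap is in Step 2, where the curved boundary $w=h_1(v)+\theta$ sits on the wrong side of the cubics.

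First, the region misses $J_e$. Since $c\mapsto h_c(v)$ is increasing for $v<0$, every equilibrium satisfies $w_e(c)=h_c(v_e(c))\le h_1(v_e(c))<h_1(v_e(c))+\theta$. Moreover $v_e(c)\le v_e(1)<v_1$, because $h_1(v_e(1))=w_e(1)>w_*-\theta=h_1(v_1)$ and $h_1$ is decreasing on $(-\infty,v_m(1)]$. So no point of $J_e$ lies in your region, and the claim in Step 3 is false.

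Second, the region is not invariant. For $v\le v_1$ and $w\ge h_1(v)+\theta$ you have $g_1(v,w,c)\le-\theta$ for every $c$. Because $h_1$ is decreasing there, the outward normal $(h_1'(v),-1)$ to $\{w\ge h_1(v)+\theta\}$ points down-\emph{left}, not down-right. The inner product with the velocity is $h_1'(v)g_1-\varepsilon g_2\ge |h_1'(v)|\theta-C\varepsilon>0$, so the field exits. Trajectories are swept leftward toward the attracting left branches, which lie outside your set.

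Your ``main obstacle'' paragraph is also inverted. The point $(v_*,w_*)$ always lies to the left of $C_{-1}$, hence of $C_1$, since it is on $\Lambda$ below $(v_e(-1),w_e(-1))\in C_{-1}$. That is exactly why $v_1>v_*$ holds. The condition you hoped to check, $\Lambda$ at height $w_*$ lying to the right of $C_1$, never holds.

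The missing idea is that the lower-left boundary must lie to the \emph{left} of all left branches, where $g_1>0$ for every $c$. There the $O(1)$ rightward push dominates the $O(\varepsilon)$ downward drift to the left of $\Lambda$. You can repair your construction with the lower envelope: take $\{v\le\bar v,\ w\ge\max(w_*,h_{-1}(v)-\theta)\}$, truncated to a bounded set. The truncation is needed because along this curve $|g_2|$ grows like $|v|^3$ while $|h_{-1}'(v)|\theta$ grows only like $v^2$. With this choice:
\begin{itemize}
\item On the curve, $g_1\ge\theta$ for all $c$, so the outward inner product is at most $-|h_{-1}'(v)|\theta+\varepsilon|g_2|<0$ for small $\varepsilon$. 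Here $|h_{-1}'|$ is bounded below because $w_*+\theta>w_m(-1)$.
\item The junction with the flat piece lies to the right of $\Lambda$ once $\theta<w_e(-1)-w_*$.
\item $J_e$ is interior, since $w_e(c)\ge h_{-1}(v_e(c))$.
\end{itemize}
The paper gets the same effect differently. It uses a vertical side $\{v=p\}$ to the left of $C_{-1}$ and replaces the lower-left corner by an arc of an integral curve of $G_{-1}$ computed at $\varepsilon=\varepsilon_0$. This relies on $g_1$ being increasing in $c$ for $v<0$ and $\varepsilon g_2$ being monotone in $\varepsilon$ to the left of $\Lambda$.
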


\subsection{Under temporal interference stimulation}\label{sec-3.2}

As already mentioned, one way to counteract the obstruction raised by Lemma \ref{lemma_general_properties} is to pick the frequency of the temporal interference stimulation $\eta$ proportional to the parameter $\varepsilon$ and to pick $\varepsilon$ small enough: this will be discussed in Section \ref{section_escaping_condition}. In this setting, the dynamics of the recovery variable $w$ are much slower than those of the membrane potential $v$. The following result provides a necessary condition for tonic spiking by relying on the study of the singular limit corresponding to an infinite timescale difference between $v$ and $w$. It relies on the set of equilibria $J_m$ introduced earlier:
\begin{align}
    J_m:=\left\{ (v_m(c),w_m(c))\,:\, c\in[-1,1]\right\},
\end{align}
where $(v_m(c),w_m(c))$ denote the equilibria of the frozen system \eqref{eq-1} (see \eqref{eq-vm-wm} for their expression).

\begin{theorem}[No tonic spiking for $\eta=\kappa \varepsilon$ small, based on the singular limit]\label{theorem_results_singular_system}
Let $\beta, \gamma, \kappa >0$ with $(A,B)\in \mathcal E_{0}$. Assume that the differential-algebraic equation
\begin{equation}\label{differential_algebraic_system}
\left\{
\begin{aligned}
     0&= \left(1-\frac{A^2}{2}-\frac{B^2}{2}-AB \cos(\kappa s)\right)v -\frac{v^3}{3}-w,\\
    \frac{d}{ds}w &= v-\gamma w + \beta,\\
    v(0)&=v_e(-1), \quad w(0) = w_e(-1).
\end{aligned}
\right.
\end{equation}
admits a solution 
$(v,w) \in C^1([0,\pi/\kappa],\mathbb R^2)$ 
with $(v(t),w(t))\not\in J_m$ for every $t\in [0,\pi/\kappa]$. Then, for $\varepsilon$ sufficiently small and $\eta = \kappa \varepsilon$, no solution of \eqref{FHN_intro} undergoes tonic spiking. 
\end{theorem}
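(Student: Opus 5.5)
Rescale time by $s=\varepsilon t$. With $\eta=\kappa\varepsilon$, system \eqref{FHN_intro} becomes the slow--fast system $\varepsilon\, dv/ds = g_1(v,w,\cos(\kappa s))$, $dw/ds = g_2(v,w)$. Adding $\theta=\kappa s \bmod 2\pi$ as a slow variable makes it autonomous. Its critical manifold is $\{g_1=0\}$, i.e.\ the union over $c=\cos\theta$ of the cubics $C_c$. The left attracting branch $\{v<v_m(c)\}$ is normally hyperbolic away from the fold set, because $\partial_v g_1 = r(c)-v^2<0$ there. The fold points over $\theta$ are exactly the points of $J_m$.

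The strategy is to show that, for $\varepsilon$ small, trajectories are trapped near this left branch and shadow \eqref{differential_algebraic_system} over each period. Then $v$ stays negative, so there is no tonic spiking in the sense of Definition~\ref{def-spike}.

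\textbf{Step 1 (periodic orbit of the reduced problem).} Consider the reduced flow on the left branch, $dw/ds = g_2(h(w,s),w)$, where $v=h(w,s)$ is the left root of $g_1=0$. It is well defined as long as $w> w_m(\cos\kappa s)$. The solution in the hypothesis starts at $(v_e(-1),w_e(-1))$ at $s=0$, where $\cos(0)=1$. Here I will need to check the convention carefully: the phase at $s=0$ is $c=1$, while the initial point is the equilibrium for $c=-1$. I will rely on the symmetry $\cos(\kappa(2\pi/\kappa - s))=\cos(\kappa s)$ and on monotonicity to extend the solution to a full period $[0,2\pi/\kappa]$.

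The reduced flow is a scalar ODE on $w$, and the map $c\mapsto$ branch is monotone. The key observation is the following. On the half period where $c$ increases back toward its value at $s=0$, the branch only moves further from the fold relative to the $w$-dynamics. Moreover, $w$ is attracted toward $J_e$, since $g_2$ changes sign across $\Lambda$. Hence a comparison argument shows that the reduced solution stays strictly above $J_m$ on the whole period.

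For the same comparison reason, every reduced solution starting with $w\ge w_e(-1)$ on the left branch remains above the one given by hypothesis. This yields a forward-invariant band $K$ for the reduced dynamics, bounded away from the folds.

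\textbf{Step 2 (lifting to $\varepsilon>0$).} By Fenichel theory on compact pieces of the normally hyperbolic branch, for $\varepsilon$ small there is a slow manifold $\varepsilon$-close to the critical one, whose dynamics is an $O(\varepsilon)$ perturbation of the reduced flow. Uniformity in $\theta$ follows from compactness of the circle. Alternatively, I would use a direct Tikhonov-type estimate: in a thin tube around the branch, $\dot v$ points toward the branch at rate $O(1/\varepsilon)$ in slow time. Together with Step 1, this gives a forward-invariant neighbourhood $\mathcal U$ of the band $K$ on which $v$ stays below some negative constant, for instance below $v_m(1)+\delta$.

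\textbf{Step 3 (every solution enters $\mathcal U$).} By Lemma~\ref{lem_region_many_visits}, every trajectory visits any prescribed neighbourhood $\Omega$ of $J_e$ at arbitrarily large times. Choose $\Omega\subset\mathcal U$; this is possible if the invariant band contains $J_e$, which I expect from Step 1 since $J_e$ lies above $w_e(-1)$ on $\Lambda$. Once the trajectory is in $\mathcal U$, it stays there forever by Step 2. Thus $v(t)<0$ for all large $t$, and tonic spiking fails.

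\textbf{Main obstacle.} The main difficulty is Step 1: turning the hypothesis on the single DAE solution over $[0,\pi/\kappa]$ into uniform invariance, over all periods and all starting points in the band, of a region avoiding $J_m$. This requires the monotone comparison in $c$ and $w$. I would first try to prove that the worst case is exactly the hypothesized trajectory, namely the one started at the lowest equilibrium at the phase where the fold is highest. The argument would compare against it using monotonicity of $w_m(c)$ and $w_e(c)$ in $c$.
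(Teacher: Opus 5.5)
Your overall architecture matches the paper's proof of Lemma~\ref{lemma_non_spiking}: recurrence near $J_e$ via Lemma~\ref{lem_region_many_visits}, comparison of reduced solutions on the left branches, then a lift to $\varepsilon>0$. Replacing Lemma~\ref{lem:tracking} by Fenichel theory or an inflowing tube would be acceptable.

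\textbf{Step~1 gets the geometry backwards.} Since $w_m(c)=-\frac23 r(c)^{3/2}$ is increasing in $c$, we have $\frac{d}{ds}w_m(\cos\kappa s)=v_m(\cos\kappa s)AB\kappa\sin(\kappa s)>0$ exactly when $\cos(\kappa s)$ increases. That is the half-period in which the fold rises toward the reduced solution and escape can occur (Lemma~\ref{lem:escaping}, Remark~\ref{rmk:Xup-left}); under Assumption~(E) it does occur. The half-period where $c$ decreases is the automatically safe one. Indeed, under $\mathcal E_0$ we have $v_m(c)>v_e(c)$, so $J_m$ lies strictly below $\Lambda$, and near $J_m$ one has $dw/ds=g_2>0$ while $w_m$ decreases.

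So your claim that on the half period where $c$ increases ``the branch only moves further from the fold'' is false. With your reading of the hypothesis (a solution on $[0,\pi/\kappa]$ with $c$ going from $1$ to $-1$), it would constrain only the trivial half. The DAE as literally written has no solution: $(v_e(-1),w_e(-1))\in C_{-1}$ is not on $C_1$, because $v_e(-1)<0$ and the cubics are strictly ordered for $v<0$. The reflection $s\mapsto 2\pi/\kappa-s$ cannot repair this, since it reverses $dw/ds$. The hypothesis must be read as in Lemma~\ref{lemma_non_spiking}: the $\Xup$ curve from $(v_e(-1),w_e(-1))$ at phase $c=-1$ reaches $C_1$ without touching $J_m$ (equivalently, the DAE shifted by $\pi/\kappa$, with $-\cos$). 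Your ``worst case'' is therefore the start at phase $c=-1$, not the phase where the fold is highest; there $(v_e(-1),w_e(-1))$ is not even on the cubic.

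\textbf{The invariant set in Steps~1 and~3 must be phase dependent.} The phase-independent band $\{w\ge w_e(-1)\}$ does not work. When $w_e(-1)\le w_m(1)$ (the only case not already covered by Theorem~\ref{thm_geometric_condition}), this band reaches the fold at phases near $c=1$, so it is not bounded away from $J_m$. Moreover, Lemma~\ref{lemma_comparison} compares solutions only at equal phase. Hence ``every reduced solution with $w\ge w_e(-1)$ stays above the hypothesized one'' holds only for solutions started at phase $c=-1$. What does work is the following.
\begin{enumerate}[(a)]
\item $\{w\ge w_e(-1)\}$ is forward invariant for regular solutions, because at height $w_e(-1)$ the left branch of $C_c$ has $v\ge v_e(-1)$, hence $g_2\ge 0$. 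So at each phase $c=-1$ the solution lies on $C_{-1}$ at or left of $v_e(-1)$.
\item Lemma~\ref{lemma_comparison} then keeps it left of the hypothesized $\Xup$ curve during the increasing half.
\item The decreasing half is safe, as shown above.
\end{enumerate}
The resulting region is bounded below by the hypothesized curve. You should add a margin by starting that curve at $w_e(-1)-\delta$, which you need anyway to absorb the $O(\varepsilon)$ error.

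In Step~3 you also cannot take $\Omega\subset\mathcal U$. $\Omega$ is a planar neighbourhood of $J_e$ visited at arbitrary phase, while $\mathcal U$ is a thin tube around the critical manifold in $(v,w,\theta)$. For example, $(v_e(-1),w_e(-1))$ at phase $c=1$ lies below the fold of $C_1$ when $w_e(-1)<w_m(1)$, and the fast dynamics carries it to the right branch. You must argue that the visits occur near $C_{\cos\kappa s}$, hence near $(v_e(c_0),w_e(c_0))$ at its own phase $c_0$. You then need that this point lies left of the hypothesized curve at phase $c_0$. This holds because, during the increasing half, that curve cannot cross $\Lambda$ upward: on $\Lambda$, $dw/ds=0$ while $w_e(\cos\kappa s)$ increases.
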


The proof of this result is postponed to Section \ref{sec-nontonic} as it relies on some notations that will be introduced later on.

Let us discuss the applicability of Theorem~\ref{theorem_results_singular_system}. The necessary condition for tonic spiking requires computing a specific solution to system \eqref{differential_algebraic_system} to check that it satisfies $(v(t),w(t))\notin J_m$. This solution can be computed numerically very effectively since, as we will see in Section~\ref{section_escaping_condition}, it is the integral curve of a well-defined vector field. We will also see that for $\kappa>0$ small enough, the assumptions of the theorem can be verified without computing any trajectory (cf.~Corollary~\ref{cor:noescaping} in Section~\ref{sec-nontonic}). 

In Section~\ref{sec:heuristics}, we will also discuss a positive counterpart of Theorem~\ref{theorem_results_singular_system}: we will provide a sufficient condition under which an action potential is ensured for the solutions of \eqref{FHN_intro}
for $\varepsilon$ small, $\eta = \kappa \varepsilon$, and most initial conditions. 
The condition requires that a solution of the singular system \eqref{differential_algebraic_system} with a suitable initial condition reaches $J_m$ before reaching $C_1$. Further heuristic considerations actually allow us to conjecture that, for $\varepsilon$ small enough, $\eta = \kappa \varepsilon$, and almost every initial condition, the corresponding solution of \eqref{FHN_intro} undergoes tonic spiking. Detailed explanations are given in Section~\ref{sec:heuristics}, and relevant numerical experiments are provided in Section~\ref{sec:numerical_experiments}.

\subsection{Proofs}\label{sec-proof-lemmas}
\subsubsection{Proof of Lemma \ref{lem_region_many_visits}}\label{proof-lem_region_many_visits}

\begin{figure}
  \centering
  \begin{tikzpicture}
    %\draw[step=0.05\linewidth, gray, very thin] (0,0) grid (\linewidth,7cm);
    %Figure 3a
    \node[anchor=south west, inner sep=0cm] (image1) at (0,3.5cm)
      {\includegraphics[width=0.29\linewidth]{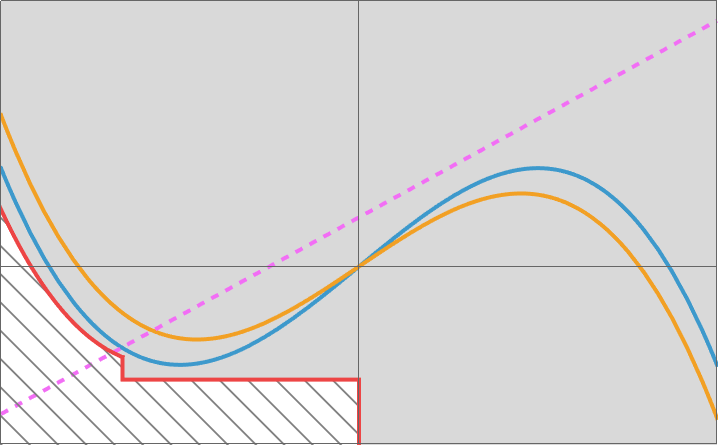}};
    \begin{scope}[ 
      shift={(image1.south west)},
      x={(image1.south east)-(image1.south west)},
      y={(image1.north west)-(image1.south west)}
      ]
      %\draw[step=0.1, gray, very thin] (0,0) grid (1,1);
      \node[thick, rounded corners, fill=gray!5, inner sep=1pt] at (0.3,0.08) {$R_0$};
      \node at (0.25,0.72) {$\Omega_0\!\setminus\!R_0$};
      \node at (0.85,0.9) {$\Lambda$};
    \end{scope}

    %Figure 3b
    \node[anchor=south west, inner sep=0cm] (image2) at (0.32\linewidth,3.5cm)
      {\includegraphics[width=0.29\linewidth]{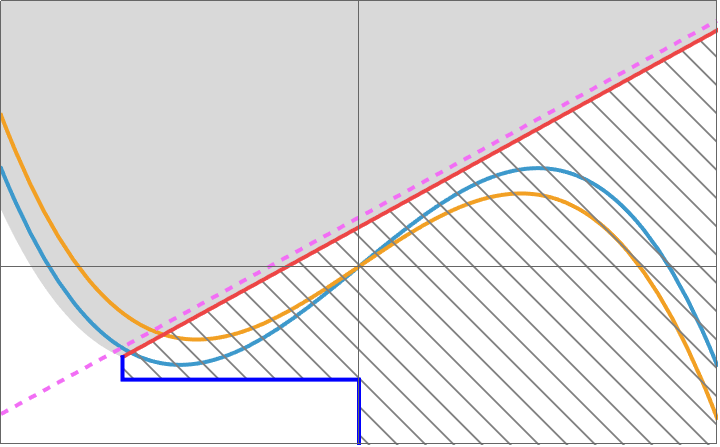}};
    \begin{scope}[ 
      shift={(image2.south west)},
      x={(image2.south east)-(image2.south west)},
      y={(image2.north west)-(image2.south west)}
      ]
      %\draw[step=0.1, gray, very thin] (0,0) grid (1,1);
      \node[thick, rounded corners, fill=gray!5, inner sep=1pt] at (0.75,0.4) {$R_1$};
      \node at (0.25,0.72) {$\Omega_1\!\setminus\!R_1$};
      \node at (0.85,0.9) {$\Lambda$};
    \end{scope}

    %Figure 3c
    \node[anchor=south west, inner sep=0cm] (image3) at (0.64\linewidth,3.5cm)
      {\includegraphics[width=0.29\linewidth]{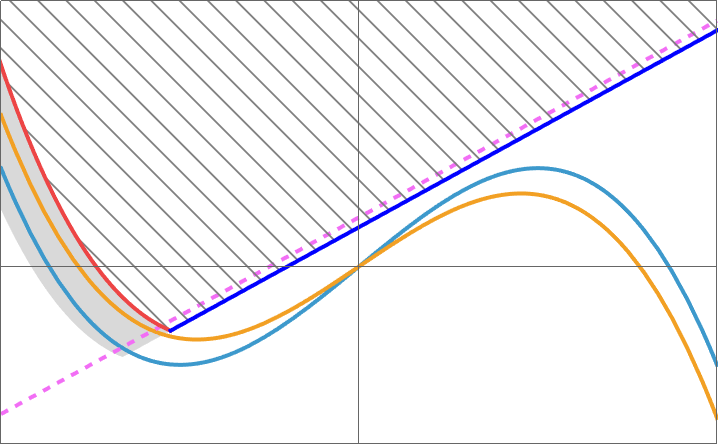}};
    \begin{scope}[ 
      shift={(image3.south west)},
      x={(image3.south east)-(image3.south west)},
      y={(image3.north west)-(image3.south west)}
      ]
      %\draw[step=0.1, gray, very thin] (0,0) grid (1,1);
      \node[thick, rounded corners, fill=gray!5, inner sep=1pt] at (0.25,0.75) {$R_2$};
      \node at (0.3,0.1) {$\Omega_2\!\setminus\!R_2$};
      \node at (0.85,0.9) {$\Lambda$};
      \draw[-{Latex[length=2mm,width=1mm]}, thick]
        (0.2,0.18) -- (0.15,0.28);
    \end{scope}
  
    %Figure 3e
    \node[anchor=south west, inner sep=0cm] (image4) at (0,0)
      {\includegraphics[width=0.29\linewidth]{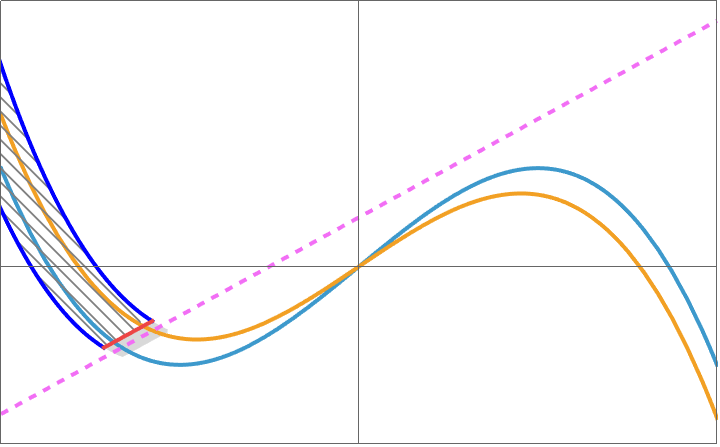}};
    \begin{scope}[ 
      shift={(image4.south west)},
      x={(image4.south east)-(image4.south west)},
      y={(image4.north west)-(image4.south west)}
      ]
      %\draw[step=0.1, gray, very thin] (0,0) grid (1,1);
      \node[thick, rounded corners, fill=gray!5, inner sep=1pt] at (0.18,0.7) {$R_3$};
      \node at (0.35,0.1) {$\Omega_3\!\setminus\!R_3$};
      \node at (0.85,0.9) {$\Lambda$};
      \draw[-{Latex[length=2mm,width=1mm]}, thick]
        (0.25,0.15) -- (0.19,0.24);
      \draw[-{Latex[length=2mm,width=1mm]}, thick]
        (0.12,0.65) -- (0.05,0.55);
    \end{scope}

    %Figure 3e
    \node[anchor=south west, inner sep=0cm] (image5) at (0.32\linewidth,0)
      {\includegraphics[width=0.29\linewidth]{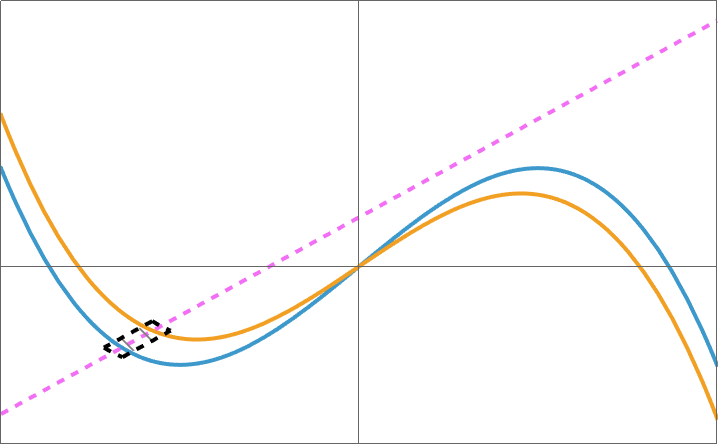}};
    \begin{scope}[ 
      shift={(image5.south west)},
      x={(image5.south east)-(image5.south west)},
      y={(image5.north west)-(image5.south west)}
      ]
      \node at (0.22,0.35) {$\Omega_*$};
      \node at (0.85,0.9) {$\Lambda$};
    \end{scope}
    \node[anchor=south west, inner sep=0cm] (image6) at (0.64\linewidth,0)
      {\includegraphics[width=0.29\linewidth]{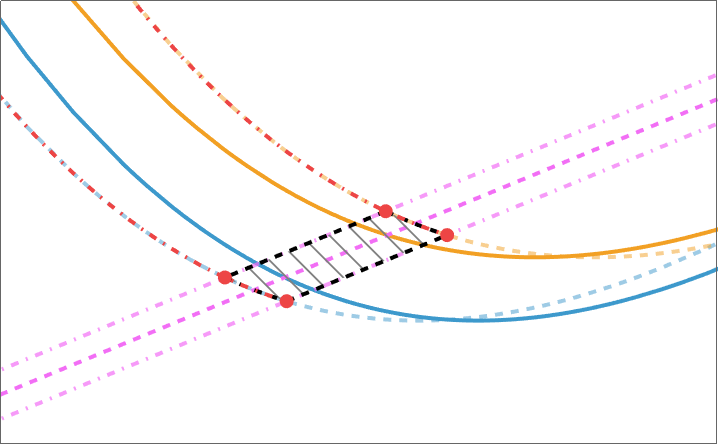}};
    \begin{scope}[ 
      shift={(image6.south west)},
      x={(image6.south east)-(image6.south west)},
      y={(image6.north west)-(image6.south west)}
      ]
      %\draw[step=0.1, gray, very thin] (0,0) grid (1,1);
      \node at (0.92,0.56) {$T_\delta \Lambda$};
      \node at (0.8,0.8) {$T_{\text{-}\delta}\Lambda$};
      \node at (0.1,0.5) {$\Gamma_1$};
      \node at (0.3,0.9) {$\Gamma_2$};
      \node at (0.55,0.6) {\small $P_1$};
      \node at (0.22,0.4) {\small $P_2$};
      \node at (0.4,0.25) {\small $P_3$};
      \node at (0.7,0.48) {\small $P_4$};
      \draw[-{Latex[length=2mm,width=1mm]}, thick]
        (0.65,0.16) -- (0.48,0.42);
      \node at (0.7,0.1) {$\Omega_*$};
    \end{scope}
  
    \node at (2.4cm,-0.3cm) {\Large $v$ axis};
    \node at (7.4cm,-0.3cm) {\Large $v$ axis};
    \node at (12.2cm,-0.3cm) {\Large $v$ axis};
    \node[rotate=90] at (-0.3cm, 1.3cm) {\Large $w$ axis};
    \node[rotate=90] at (-0.3cm, 4.8cm) {\Large $w$ axis};
    \node at (-0.015\linewidth, 6.1cm) {(a)};
    \node at (0.305\linewidth, 6.1cm) {(b)};
    \node at (0.625\linewidth, 6.1cm) {(c)};
    \node at (-0.015\linewidth, 2.6cm) {(d)};
    \node at (0.305\linewidth, 2.6cm) {(e)};
    \node at (0.625\linewidth, 2.6cm) {(f)};
  \end{tikzpicture}

  \caption{(a)-(e) Represent the regions described in the proof of Lemma~\ref{lem_region_many_visits} in the $vw$ plane. The red (respectively, blue) arcs indicate the curves through which the trajectories can (respectively, cannot) leave the corresponding region $R_i$. (f) shows the neighborhood $\Omega_*$ used in the construction
  }
  \label{fig:region_many_visits}
\end{figure}

We say that a region $\Omega_*\subset \R^2$ is a region of {\it repeated visits} if, for every trajectory of system \eqref{general_switching_system}, there exists a sequence of times $t_k\to +\infty$ such that the trajectory is in $\Omega_*$ at such times $t_k$. We 
fix a neighborhood $\Omega$ of  $J_e=\{(v_e(c),w_e(c))\mid c\in[-1,1]\}$ and we 
aim at showing that $\Omega$ is a region of repeated visits provided that $\varepsilon$ is small enough. We start by picking $\Omega_0 = \R^2$, which is trivially a region of repeated visits, 
and we iteratively trim it to obtain smaller and smaller regions of repeated visits, ultimately yielding a region contained in $\Omega$. 

We argue in the following way. Let $\Omega_k$ be a region of repeated visits and consider $R_k\subset \Omega_k$. We claim that if, for each $\tau_0>0$, each trajectory with initial condition in $R_k$ at time $t=\tau_0$  eventually leaves $R_k$ towards $\Omega_k \setminus R_k$, then $\Omega_k\setminus R_k$ is a region of repeated visits. 
Indeed, consider any trajectory of \eqref{general_switching_system} and notice that, by definition, it either intersects $\Omega_k\setminus R_k$ or $R_k$ along an unbounded sequence of times. By the assumption on $R_k$, if the trajectory intersects $R_k$ along an unbounded sequence of times, then the same is also true by replacing $R_k$ by $\Omega_k\setminus R_k$, and the claim is proved. 

To iteratively construct the sets $\Omega_k$ and $R_k$, let us introduce some useful objects in $\R^2$. We use the notation introduced in Section~\ref{key_objects} in what follows. Let $\delta>0$ be a small parameter to be fixed later. Denote by $T_\delta$ the shift $T_\delta:\R^2\to \R^2$ defined by $T_\delta(v,w)=(v+\delta,w)$. Its inverse is $T_{-\delta}$. Let the points $P_1,P_2,P_3,P_4\in \R^2$ be defined as follows:
\begin{equation*}
\begin{array}{ll}
T_{\delta}(C_1)\cap T_{\text{-}\delta}(\Lambda)=\{P_1\},
&T_{\text{-}\delta}(v_e(-1),w_e(-1))=P_2,\\  
T_{ \text{-}\delta}(C_{-1})\cap T_{\delta}(\Lambda)=\{P_3\},
&T_{\delta}(v_e(1),w_e(1))=P_4.
\end{array}
\end{equation*}

Denote by $\Omega_*$ the bounded region delimited by: the segment between $P_1$ and $P_2$, the portion of $T_{-\delta}(C_{-1})$  between $P_2$ and $P_3$, the segment between $P_3$ and $P_4$, the portion of $T_\delta(C_{1})$  between $P_4$ and $P_1$. Provided that $\delta>0$ is small enough, $\Omega_*\subset \Omega$. 

Let $\Gamma_1$ be the portion of  $T_{\text{-}\delta}(C_{-1})$ at the left of $P_3$ and $\Gamma_2$ be the portion of  $T_{\delta}(C_{1})$ at the left of $P_4$, that is, letting $P_3=(p_{31},p_{32})$,
\begin{align*}
    \Gamma_1&=\{(v,r(-1) (v + \delta)  - (v+ \delta)^3/3)\mid v\in(-\infty,    p_{31})\},\\
  \Gamma_2&=\{(v, r(1) (v-\delta)  - (v-\delta)^3/3)\mid v\in(-\infty,v_e(1)+\delta)\}. 
\end{align*} 

Both curves can be seen as graphs of  functions of the variable $v$, and as such, have slope
bounded above by a negative constant, provided that $\delta$ is small enough.
For every $c\in [-1,1]$, the vector field $G_c$ points to the left of $\Gamma_2$, since both $g_1(\cdot,c)$ and $g_2$ are negative on $\Gamma_2$.

We claim that for $\varepsilon>0$ small enough (depending on $\delta$) $G_c$, $c\in [-1,1]$, points to the right of $\Gamma_1$. Indeed, the slope of $G_c(v,w)$ at $(v,w)=(v,r(-1) (v + \delta)  - (v+ \delta)^3/3)$ is equal to 
\[\varepsilon\frac{ g_2(v,w)}{g_1(v,w,c)}=\varepsilon\frac{v - \gamma w + \beta}{r(c) v - \frac{v^3}{3}-w}
=\varepsilon
\frac
{
   \frac{\gamma}{3}v^3
 +\gamma \delta v^2 
+(1 - \gamma r(-1)   +\gamma  \delta^2)v
+\gamma  \delta (\frac{\delta^2}{3} -r(-1)) + \beta}
{
 \delta v^2
+(r(c)  -r(-1)+ \delta^2)v 
+ \delta (\frac{\delta^2}{3}- r(-1)) }. \]
Since the slope of $\Gamma_1$ goes as $-v^2$ as $v\to-\infty$, we easily deduce that, up to taking $\varepsilon$ small enough the slope of $G_c(v,w)$ is larger than that of $\Gamma_1$ everywhere on $\Gamma_1$ for every $c\in [-1,1]$. Hence, $G_c$, $c\in [-1,1]$, points to the right of $\Gamma_1$.

The iterative construction of $\Omega_k$ and $R_k$ is illustrated in Figure~\ref{fig:region_many_visits}. As already mentioned, $\Omega_0 = \R^2$. 
\begin{itemize}
\item Let $R_0=\cup_{i=1}^3 R_{0i}$, where
\begin{align*}
    R_{01}&=\{(v,w)\mid v\le     0,\;w\le  w_m(-1)-\delta\},
\qquad R_{02}
=\{(v,w)\mid v\le  p_{31},\; 
w\le p_{32}\},
\end{align*}
and $R_{03}$ is the subregion of the half-plane $\{w\ge p_{32}\}$ that is to the left of $\Gamma_1$. 
Inside $R_0$, the derivative $\frac{dv}{dt}=g_1(v,w,c)$ is uniformly bounded below by a positive constant. Hence, a trajectory of \eqref{general_switching_system} starting in $R_0$ leaves it in finite time. 
Set $\Omega_1 = \Omega_0 \setminus R_0$.
\item Let $R_1$ be the intersection of $\Omega_1$ with the half-plane that is below $T_\delta(\Lambda)$, i.e., 
\[R_1=\left\{(v,w)\in \Omega_1 \mid w\le  \frac{v-\delta+\beta}{\gamma}\right\}.\]
We notice that $g_2$ is uniformly bounded below by a positive constant in $R_1$. 
As a consequence, a trajectory $(v,w)$ of \eqref{general_switching_system} cannot stay in $R_1$ forever: indeed, on the one hand, if at some time $t$ the point $(v(t),w(t))$ is in $R_1$ and above $C_{-1}$, then $\dot v\le 0$ as long as $(v,w)$ stays in $R_1$, which means that $w$ is upper bounded by $\frac{v(t)-\delta+\beta}{\gamma}$, and, on the other hand, if $(v(t),w(t))$ is in $R_1$ and below $C_{-1}$, then $(v,w)$ reaches either $T_\delta(\Lambda)$ or $C_{-1}$ in finite time. 
In conclusion, every trajectory of \eqref{general_switching_system} leaves $R_1$ in finite time through $T_\delta(\Lambda)\cap \Omega_1$,  towards $\Omega_1\setminus R_1$. 
Set $\Omega_2 = \Omega_1 \setminus R_1$. 
\item 
Let $R_2$ be the set of points that are to the right of $\Gamma_2$ and in the half-plane above $T_\delta(\Lambda)$. Assuming that $\delta$ is small enough, $T_\delta(\Lambda)$ does not intersect $C_{-1}$ in the first quadrant, so that $g_1(\cdot,c)$ is uniformly bounded above by a negative value  on $R_2$ for $c\in [-1,1]$
Reasoning as in the previous step, each trajectory of \eqref{general_switching_system} leaves $R_2$ in finite time and can do so 
only through $\Gamma_2$. Set $\Omega_3 = \Omega_2 \setminus R_2$.
\item 
Let $R_3=\Omega_3\setminus \Omega_*$, so that the boundary of  $R_3$ is made of the portion of $\Gamma_1$ at the left of $P_2$, the segment between $P_1$ and $P_2$, and the portion of $\Gamma_2$ at the left of $P_1$. 
Since $g_2$ is uniformly bounded above by a negative value in $R_3$ and $G_c$ points towards the interior of $R_3$ on the portion of $\Gamma_1$ at the left of $P_2$ and the portion of $\Gamma_2$ at the left of $P_1$, then each trajectory of \eqref{general_switching_system} exits $R_3$ in finite time towards $\Omega_*=\Omega_3\setminus R_3$.
\end{itemize}

We conclude that $\Omega_*$ is a region of repeated visits. This completes the proof of Lemma~\ref{lem_region_many_visits}. 

\subsubsection{Proof of Lemma \ref{lemma_geometric_condition}}\label{proof-lemma_geometric_condition}

The idea is to construct a region that is positively invariant for the frozen system \eqref{general_switching_system} corresponding to the vector field $G_c(v,w)\!=\!(g_1(v,w,c), \varepsilon g_2(v,w))^\top$, defined in \eqref{defi_F1}-\eqref{defi_F2}. To that aim, we first describe a positively invariant region for the singular limit corresponding to $\varepsilon=0$, and then we adjust it so that the argument holds for small $\varepsilon > 0$. To show that a closed region is positively invariant, it is enough to show that at every point of its boundary, 
each vector fields $G_c$, $c\in[-1,1]$, all either point towards the interior of the region or are tangent to it (for a precise characterization even in the case of less regular sets than those considered here, see \cite{Nagumo1942} or \cite{Blanchini1999}). Let $\delta = \frac{1}{2}(w_e(-1) - w_m(1))>0$ and let $p$, $q\in \R$ be the leftmost solutions of the equations 
\begin{equation*}
g_1(p, w_e(1)+\delta,-1) = 0, \quad g_1(q, w_e(-1)-\delta,1)=0,
\end{equation*}
i.e., the only solutions to such equations that satisfy $p<v_m(-1)$ and $q<v_m(1)$ (cf.~Figure~\ref{fig_geometric_condition3}). Let us show that $p < v_e(-1) < q$.
Write $g_1(v,w,c) = h(v,c)-w$. By the definition of $\mathcal{E}_0$ we know that $v_e(c)<v_m(c)$. Observe that $c\mapsto h(v,c)$ is monotonically increasing, while $v\mapsto h(v,c)$ is monotonically decreasing on $(-\infty,v_m(c)]$ 
for $c\in[-1,1]$.
Then 
\[
h(q,-1)<h(q,1)=
w_e(-1)-\delta < w_e(-1) =h(v_e(-1),-1)
\]
implies that $q>v_e(-1)$. Similarly, 
\[
h(p,-1)=w_e(1)+\delta>w_e(1)>w_e(-1)=h(v_e(-1),-1)
\]
implies that
$p<v_e(-1)$. 
Consider the rectangle $R= [p,q] \times [w_e(-1) - \delta, w_e(1) + \delta]$ and denote its sides by $S_1 = [p,q]\times \{w_e(-1)-\delta\}$, $S_2 = \{p\}\times [w_e(-1)-\delta , w_e(1) + \delta]$, $S_3 = [p,q]\times \{w_e(1)+\delta\}$, $S_4 = \{q\}\times [w_e(-1)-\delta , w_e(1) + \delta]$, as represented in Figure~\ref{fig_geometric_condition3}.
Then this region $R$ is positively invariant in the limit case $\varepsilon=0$. This can be deduced from two facts. First, all trajectories of the system are horizontal in this case (as $\dot w=0$), meaning that trajectories cannot leave $R$ from its horizontal segments. Second, $S_2$ is to the left and $S_4$ to the right of each cubic $C_c$, 
$c\in [-1,1]$, meaning that the vector field of \eqref{general_switching_system} points towards right on $S_2$ and towards left on $S_4$.

\begin{figure}
    \centering
\begin{tikzpicture} 
  \node[anchor=south west, inner sep=0cm] (image) at (0,0)
    {\includegraphics[width=0.6\linewidth]{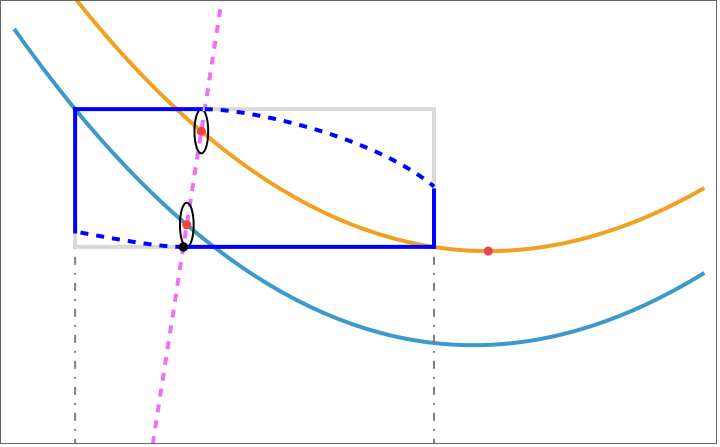}};
  \begin{scope}[x={(image.south east)}, y={(image.north west)}]
    %\draw[step=0.1, gray, very thin] (0,0) grid (1,1);
    %axis labels
    \node at (0.13,0.08) {$p$};
    \node at (0.63,0.08) {$q$};
    \node at (0.2,0.2) {$\Lambda$};
    \node at (0.22,0.4) {$P$};
    \node at (0.8,0.52) {$C_1$};
    \node at (0.8,0.19) {$C_{\text{-}1}$};
    \node[black!60] at (0.43,0.39) {$S_1$};
    \node[black!60] at (0.07,0.58) {$S_2$};
    \node[black!60] at (0.16,0.81) {$S_3$};
    \node[black!60] at (0.63,0.57) {$S_4$};
    \node at (0.45,0.9) {$\left(v_e(1),w_e(1)\right)$};
    \node at (0.4,0.18) {$\left(v_e(\text{-}1),w_e(\text{-}1)\right)$};
    \node at (0.75,0.39) {$\left(v_m(1),w_m(1)\right)$};
    \draw[-{Latex[length=3mm,width=2mm]}, thick]
      (0.37,0.25) -- (0.28,0.46);
    \draw[-{Latex[length=3mm,width=2mm]}, thick]
      (0.38,0.83) -- (0.29,0.72);
    
    \node at (0.5,-0.03) {\Large $v$ axis};
    \node[rotate=90] at (-0.03, 0.5) {\Large $w$ axis};
  \end{scope}
\end{tikzpicture}
    \caption{Description of the geometric elements used in the construction of the invariant region $\Omega$ considered in the proof of Lemma~\ref{lemma_geometric_condition}
    }
    \label{fig_geometric_condition3}
\end{figure}

The previous argument fails when $\varepsilon>0$, since the trajectories of the system are no longer horizontal. However, we only need to make minor modifications on the region $R$ to construct a positively invariant neighborhood $\Omega$ of $J_e$ for $\varepsilon >0$ small enough. 

For $\varepsilon>0$, the vertical segments $S_2$ and $S_4$ still cannot be crossed. Indeed, by construction, $g_1(v,w,c) >0$ (respectively, $g_1(v,w, c) <0$) for all $(v,w)$ in the interior of $S_2$ (respectively, $S_4$) and all $c\in [-1,1]$.
Moreover, at the upper-left and the lower-right corners of $R$, the vector field $G_c$ points either to the interior of $R$ or is tangent to the corresponding segment $S_2$ or $S_4$, depending on the value of $c$. 

For points $(v,w)$ in the interior of the lower horizontal segment $S_1$ and to the right of the $w$-nullcline $\Lambda$ (defined in \eqref{eq-Lambda}), we have $g_2(v,w) >0$, so each vector field $G_c$, $c\in [-1,1]$, points towards the interior of $R$. We will now replace the portion of $S_1$ that is to the left of $\Lambda$ (if it is nonempty) by a suitable curve. 

To that aim, consider the integral curve of the vector field $-G_{-1}$ (corresponding to $G_{-1}$ backwards in time) starting from the point $P = (\gamma(w_e(-1) - \delta) - \beta, w_e(-1)-\delta)$, as represented in blue in Figure \ref{fig_geometric_condition3}.
If $\varepsilon\in (0,\varepsilon_0]$ with $\varepsilon_0>0$ small enough, such a curve crosses $S_2$ before crossing the cubic $C_{-1}$. Let $G_{-1,\varepsilon_0}$ be the vector field $G_{-1}$ obtained taking $\varepsilon=\varepsilon_0$. 
Denote by $\Gamma$ the portion of integral line of $G_{-1,\varepsilon_0}$ between $S_2$ and $P$. 
In the construction of $R$, replace the lower left corner of $R$ by $\Gamma$. 
Since $g_1(v,w,c)$ increases monotonically with respect to $c$ where $v<0$, while $g_2$ is independent of $c$ and increases monotonically with respect to $\varepsilon$ on the left of $\Lambda$, it follows that $G_c(v,w)$ points towards the interior of the newly defined region for every $(v,w)$ in $\Gamma$, every $c\in [-1,1]$ and every $\varepsilon\in (0,\varepsilon_0)$, except for the point $P$ at which each $G_c$ is tangent to $S_1$. 
 
A similar analysis can be done to replace the 
upper right corner of $R$, picking the point $(\gamma(w_e(1) + \delta) - \beta, w_e(1)+\delta)$ and considering the integral curve of the vector field $G_{1,\varepsilon_0}$ backwards in time. 

Thus we find a a region $\Omega$ as shown in Figure~\ref{fig_geometric_condition3}, which is positively invariant for each vector field $G_c$, $c\in[-1,1]$, $\varepsilon\in (0,\varepsilon_0)$. We conclude that, up to possibly reducing $\varepsilon_0$, every solution of system~\eqref{general_switching_system} eventually reaches $\Omega$ by Lemma~\ref{lem_region_many_visits} and then stays in $\Omega$ for all larger times.

\section{Tonic spiking under piecewise constant input}\label{sec:four}

\subsection{Sufficient condition}

A first way to overcome the obstruction posed by Lemma \ref{lemma_general_properties}-\ref{lemma_preliminary_item2} is to renounce continuous inputs. The next result guarantees tonic spiking using a piecewise-constant input of the form $f(t)=\sign(\cos(\eta t))$ with $\eta$ small enough. Its proof is reported in Section \ref{proof-thm_activation_piecewise_constant}.

\begin{theorem}[Tonic spiking through piecewise constant inputs]\label{thm_activation_piecewise_constant}
Let $\beta, \gamma >0$, $(A,B)\in \mathcal{E}_{\rm unique}$ with $A+B<\sqrt{2}$, and $v_e(-1)< v_m(-1)$. 
Suppose that the geometric condition
\begin{equation}\label{eq:geom_condition}
w_e(-1)<w_m(1)
\end{equation}
is satisfied. Then there exists $\varepsilon_0>0$ such that, for every $\varepsilon\in(0,\varepsilon_0)$, there exists $\eta_0 = \eta_0(\varepsilon) >0$ such that for every $\eta\in(0,\eta_0)$, every solution $(v,w)$ of 
\begin{equation}\label{PAS_FHN_square_initial}
\left\{
\begin{array}{rl}
\dot{v} &= (1-\frac{A^2}{2}-\frac{B^2}{2}-AB {\rm sign}(\cos(\eta t))) v- \frac{v^3}{3} -w,\\
\dot{w} &= \varepsilon(v-\gamma w +\beta),
\end{array}\right.
\end{equation}
undergoes tonic spiking.
\end{theorem}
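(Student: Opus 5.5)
We plan to exploit the fact that, for $\eta$ small, the input $f(t)=\sign(\cos(\eta t))$ is piecewise constant and alternates between $c=1$ (on intervals where $\cos(\eta t)>0$) and $c=-1$, each phase lasting $\pi/\eta$, which is arbitrarily long. On each phase the system is the autonomous frozen system \eqref{eq-1}. The conditions $(A,B)\in\mathcal E_{\rm unique}$, $A+B<\sqrt2$ and $v_e(-1)<v_m(-1)$ give, via Lemma~\ref{properties_equilibrium}-\ref{lemma_item_global} (note $r(-1)-v_e(-1)^2<0$), that $(v_e(-1),w_e(-1))$ is globally exponentially stable for $G_{-1}$ once $\varepsilon<\varepsilon_0$. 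The strategy is therefore: during each $c=-1$ phase, the trajectory converges to a small neighbourhood of $(v_e(-1),w_e(-1))$; at the switch to $c=1$, this point lies, by \eqref{eq:geom_condition}, strictly below the minimum $(v_m(1),w_m(1))$ of $C_1$, hence in the region where the fast dynamics of $G_1$ push $v$ to the right branch of $C_1$, which lies in $\{v>0\}$. This yields an action potential in every period $[2k\pi/\eta,2(k+1)\pi/\eta]$, which is tonic spiking.

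The first step is uniform convergence during the $c=-1$ phases. By Fact~\ref{fact1}, every trajectory eventually stays in a fixed compact rectangle $K=[-L,L]\times[-S,S]$, independent of $f$. By global exponential stability for $G_{-1}$ and compactness of $K$, for any neighbourhood $U$ of $(v_e(-1),w_e(-1))$ there is a time $\tau(\varepsilon,U)$ such that every solution of $G_{-1}$ starting in $K$ is in $U$ after time $\tau$. Choosing $\eta_0(\varepsilon)$ with $\pi/\eta_0>\tau$, at the end of each $c=-1$ phase the state is in $U$. We choose $U$ so that $\sup_U w<w_m(1)-\delta$ for some $\delta>0$, which is possible by \eqref{eq:geom_condition}.

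The second step, which is the main obstacle, is to show that a solution of $G_1$ starting in $U$ reaches $\{v\ge0\}$ within a time $T_1$ independent of $\eta$ (then we shrink $\eta_0$ so that $\pi/\eta_0>T_1$). We would use a fast–slow argument on the compact set $U$. In the singular limit $\varepsilon=0$, starting at $w\le w_m(1)-\delta$, the horizontal fast flow $\dot v=g_1(v,w,1)$ has no zero for $v\le -v_m(1)$, because the left branch of $C_1$ sits above $w_m(1)$. Hence $v$ increases until it reaches the right branch of $C_1$, where $v\ge\sqrt{3r(1)}>0$ (or more simply $v$ crosses $0$ at a point where $g_1$ is bounded below). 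Quantitatively, on the set $\{v\in[v_e(-1)-1,0],\ w\le w_m(1)-\delta/2\}$ we have $g_1(\cdot,\cdot,1)\ge \mu>0$. The trajectory therefore crosses $v=0$ within a time $T_1\le C/\mu$. During this time, $w$ changes by at most $\varepsilon M T_1$, where $M$ bounds $|g_2|$ on $K$. Taking $\varepsilon_0$ small enough that $\varepsilon_0 M T_1<\delta/2$ keeps $w$ in the region where the lower bound $\mu$ holds. Note that $T_1$ does not depend on $\varepsilon$, so the order of quantifiers works: we fix $\delta,\mu,T_1$ first, then $\varepsilon_0$, and then $\eta_0(\varepsilon)$.

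Finally, we combine the two steps. For any $t_0$, take the first complete $c=-1$ phase after $t_0$ occurring once the trajectory is in $K$. By the end of it the state is in $U$, and in the subsequent $c=1$ phase, of length $\pi/\eta>T_1$, $v$ becomes nonnegative. So $[t_0,t]$ contains an action potential for some finite $t$, which proves tonic spiking by Definition~\ref{def-spike}. Some care is needed because the time to enter $K$ is merely finite, and because $U$ must be chosen inside the basin of the fast-crossing estimate. Both points are handled by the compactness argument above.
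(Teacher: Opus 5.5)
Your proof is correct. It shares the paper's two-phase structure, and your first step is exactly the paper's Lemma~\ref{lemma_time_to_approach}: global exponential stability of $(v_e(-1),w_e(-1))$ for $G_{-1}$ via Lemma~\ref{properties_equilibrium}-\ref{lemma_item_global}, made uniform on the absorbing rectangle of Fact~\ref{fact1}, with $\pi/\eta_0$ exceeding the convergence time.

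You depart from the paper in the escape step during the $c=1$ phase. The paper builds a bounded region $H$ around the $\delta$-ball whose boundary consists of six pieces (segments, part of $\{v=0\}$, and an arc of an integral curve of $G_{-1}$). It checks that $G_1$ points inward on every piece except the one in $\{v=0\}$, which requires $\varepsilon$ small on the slanted segment and a slope comparison on the arc. From $H\cap C_1=\emptyset$ it then gets $\dot v\ge\xi(\varepsilon)>0$ on $H$ and an exit time at most $|p|/\xi$.

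You instead use the global fact that $r(1)v-v^3/3\ge w_m(1)$ for all $v\le 0$. This gives $\dot v\ge\delta/2$ on the whole half-strip $\{v\le0,\ w\le w_m(1)-\delta/2\}$. You then control the slow variable by the crude drift bound $\varepsilon M T_1$ over an $\varepsilon$-independent crossing time $T_1$, and close with a continuity (bootstrap) argument.

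Your route is shorter and needs no boundary-pointing checks. Because $T_1$ does not depend on $\varepsilon$, the order of quantifiers $\delta,T_1\to\varepsilon_0\to\eta_0(\varepsilon)$ is transparent. You also make explicit the repetition over successive periods that tonic spiking requires, which the paper leaves implicit. The paper's trapping-region construction, for its part, matches the invariant-region methodology used elsewhere in the paper, and it is the template the paper invokes again in the proof of Lemma~\ref{lem:escaping}.
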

\begin{remark}
    Observe that the the assumptions $(A, B)\in \mathcal{E}_{\rm unique}$, $A+B<\sqrt{2}$, and $v_e(-1)< v_m(-1)$ are satisfied in particular when $(A, B)\in \mathcal{E}_0$. These assumptions are actually strictly weaker since they do not impose that  
    $v_e(c)< v_m(c)$ 
    for each $c\in (-1,1]$.
\end{remark}

\begin{remark}\label{remark_different_values_AB}
By adjusting the values of $A$ and $B$,
system \eqref{PAS_FHN_square_initial} can be seen as an approximation of \eqref{FHN_intro} because of the Fourier series expansion $\sign(\cos(t)) = \frac{4}{\pi} \cos(t) - \frac{4}{3 \pi} \cos(3t) + \cdots$. 
Indeed, we can approximate
\begin{equation*}
1-\frac{A^2}{2}-\frac{B^2}{2}-AB\cos(\eta t) \approx 1-\frac{A^2}{2}-\frac{B^2}{2}- \frac{\pi}{4}AB\sign(\cos(\eta t) )= 1-\frac{\tilde A^2}{2}-\frac{\tilde B^2}{2}-\tilde A\tilde B\sign(\cos(\eta t))
\end{equation*}
where $\tilde A= R \cos\theta$, $\tilde B = R\sin\theta$ with $R= \sqrt{A^2+B^2}$ and 
\begin{equation*}
\theta = \frac{1}{2} \arcsin\left(\frac{\pi}{8}\frac{A B}{A^2 + B^2}\right).
\end{equation*}
\end{remark}

An essential difference between the piecewise constant case \eqref{PAS_FHN_square_initial} and system \eqref{FHN_intro} is that tonic spiking in \eqref{PAS_FHN_square_initial} arises for every  $\eta$ small enough thanks to the discontinuity of the applied input. In \eqref{FHN_intro}, the input is smooth, and the system falls in a quasi-static regime for $\eta$ small in which no action potentials are emitted (Lemma~\ref{lemma_general_properties}-\ref{lemma_preliminary_item2}).

\subsection{Proof of Theorem~\ref{thm_activation_piecewise_constant}}\label{proof-thm_activation_piecewise_constant}

Before the proof, we state the following lemma, which refines Lemma~\ref{lem_region_many_visits} for inputs of the form $f(t)={\rm sign}(\cos(\eta t))$, where the input determines the active vector field, causing the system to follow $G_1$ when $\cos(\eta t)\ge 0$ and $G_{-1}$ when $\cos(\eta t)<0$.

\begin{lemma}[Initial approach to the equilibrium]\label{lemma_time_to_approach}
Let $\beta, \gamma >0$, $(A,B)\in \mathcal{E}_{\rm unique}$ with $A+B<\sqrt{2}$, and $v_e(-1)< v_m(-1)$.  
There exists $\varepsilon_0$ such that for every $0<\varepsilon< \varepsilon_0$ and every $\delta>0$ there exists $\eta_0 >0$ such that if $\eta\in (0,\eta_0)$ every trajectory of system \eqref{PAS_FHN_square_initial} eventually reaches the ball centered in $(v_e(-1),w_e(-1))$ of radius $\delta$. More precisely, there exists a time $t_*>0$ at which $t\mapsto {\rm sign}(\cos(\eta t))$ switches from $-1$ to $1$ and $|(v(t^*), w(t^*)) - (v_e(-1),w_e(-1))| < \delta$.
\end{lemma}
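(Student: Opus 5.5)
The plan is to exploit the fact that, for small $\eta$, the input $f(t)={\rm sign}(\cos(\eta t))$ is constant on long intervals of length $\pi/\eta$. During each such interval the system coincides with the autonomous frozen system \eqref{eq-1} with $c=\pm1$, so it suffices to show that the frozen system with $c=-1$ drives every point of a fixed compact set into the ball $B_\delta$ centered at $(v_e(-1),w_e(-1))$ in a uniformly bounded time $T(\delta)$. We then take $\eta_0$ so small that the length of a switching interval, $\pi/\eta_0$, exceeds $T(\delta)$.

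\textbf{Step 1 (confinement).} By Fact~\ref{fact1}, every trajectory enters, in finite time, a rectangle $K=[-L,L]\times[-S,S]$ that is forward invariant uniformly in $f$. This rectangle does not depend on $\eta$. We may therefore restrict attention to initial states in $K$ at the beginning of some interval on which $f\equiv-1$.

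\textbf{Step 2 (global attractivity of the frozen system with $c=-1$).} The hypotheses $(A,B)\in\mathcal E_{\rm unique}$, $A+B<\sqrt2$ and $v_e(-1)<v_m(-1)$ give $r(-1)-v_e(-1)^2<0$. The equilibrium is unique, so Lemma~\ref{properties_equilibrium}-\ref{lemma_item_global} applies and yields $\varepsilon_0$ such that, for all $\varepsilon<\varepsilon_0$, $(v_e(-1),w_e(-1))$ is globally exponentially stable for $G_{-1}$. Global asymptotic stability together with compactness of $K$ and continuous dependence on initial data then gives a uniform time $T=T(\varepsilon,\delta)$ such that every trajectory of $G_{-1}$ starting in $K$ lies in $B_{\delta}$ at time $T$.

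\textbf{Step 3 (choice of $\eta_0$ and of $t_*$).} Set $\eta_0=\pi/T$. Consider a trajectory already in $K$ at the start $\tau$ of an interval $[\tau,\tau+\pi/\eta)$ on which $f\equiv-1$. Since $\pi/\eta>T$, the state reaches $B_\delta$ during this interval.

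To show that it is still in $B_\delta$ at the switching instant $t_*=\tau+\pi/\eta$, where $f$ jumps from $-1$ to $1$, we need invariance of a neighborhood. Lemma~\ref{properties_equilibrium} gives local exponential stability, so a converse Lyapunov argument provides a sublevel set $\{V\le \rho\}\subset B_\delta$ that is forward invariant under $G_{-1}$. We therefore apply Step 2 with this sublevel set as the target instead of $B_\delta$. The trajectory then enters the sublevel set before $t_*$ and remains there, which gives $|(v(t_*),w(t_*))-(v_e(-1),w_e(-1))|<\delta$.

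\textbf{Main obstacle.} The delicate point is the order of quantifiers. The constant $\varepsilon_0$ must come only from Lemma~\ref{properties_equilibrium}-\ref{lemma_item_global}, while $T$, and hence $\eta_0$, depends on $\varepsilon$ and $\delta$. We must also check that $K$ in Step 1 is independent of $\eta$, which holds because Fact~\ref{fact1} is uniform in $f$, and that the continuity/compactness argument yields a uniform hitting time. The latter would fail with only pointwise convergence. It is secured here by using the open, forward invariant target set $\{V<\rho\}$: the hitting time is upper semicontinuous on $K$, and a compactness argument then gives a uniform bound.
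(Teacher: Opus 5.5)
Your proposal is correct and follows essentially the same route as the paper: confinement to a compact rectangle via Fact~\ref{fact1}, global exponential stability of $(v_e(-1),w_e(-1))$ for $G_{-1}$ from Lemma~\ref{properties_equilibrium}-\ref{lemma_item_global} (using $v_e(-1)^2>v_m(-1)^2=r(-1)$), a uniform time $T$, and then $\eta_0$ with $\pi/\eta_0>T$. The only difference is your converse-Lyapunov and semicontinuity argument for staying in the ball until the switching time; it is harmless but unnecessary, because the global exponential estimate $|x(t)-x_e|\le k e^{-\lambda t}|x(0)-x_e|$ on the compact set already gives a $T$ after which every trajectory remains in the $\delta$-ball.
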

\begin{proof}
First, because of Fact \ref{fact1}, we know that every trajectory of  \eqref{PAS_FHN_square_initial} eventually reaches a compact neighborhood $R$ of the origin and stays there thereafter. From Lemma~\ref{properties_equilibrium}-\ref{lemma_item_global} and since $v_m(-1)^2=r(-1)$, we know that for all $\varepsilon$ small enough, $(v_e(-1),w_e(-1))$ is globally exponentially stable for $G_{-1}$. Then there exists $T>0$ such that
every integral curve of $G_{-1}$ with initial condition $(v_0,w_0)$ 
in $R$
reaches the ball centered at the $(v_e(-1), w_e(-1))$ of radius $\delta$ within time $T$ and stays there thereafter. 
The proof is then concluded by taking $\eta_0>0$ such that $\pi/\eta_0>T$. 
\end{proof}

We now proceed to the proof of Theorem~\ref{thm_activation_piecewise_constant}. Let $0<\delta < w_m(1) - w_e(-1)$ be fixed. 
 Thanks to Lemma~\ref{lemma_time_to_approach}, we know that there exists $\varepsilon_0>0$ such that for all $\varepsilon\in (0,\varepsilon_0)$ there exists $\eta_0(\varepsilon)>0$ such that for $\eta \in (0, \eta_0(\varepsilon))$ 
 any trajectory  $(v,w)$ of system 
 \eqref{PAS_FHN_square_initial}
 satisfies 
 $|(v(t^*), w(t^*)) - (v_e(-1),w_e(-1))| < \delta$ at some  $t_*>0$ at which $t\mapsto {\rm sign}(\cos(\eta t))$ switches from $-1$ to $1$. 
 
The proof of the theorem is complete if we show that, up to further reducing $\varepsilon_0$ and $\eta_0(\cdot)$, $(v,w)$ must enter the half-plane $\{v>0\}$ after time $t^*$.
Let us then construct a region $H$ containing the ball centered at $(v_e(-1),w_e(-1))$ of radius $\delta$ such that if $(v(t^*), w(t^*))\in H$ then $(v(\cdot),w(\cdot))$ exits $H$ in finite time after $t^*$ by crossing the line $\{v=0\}$. Let $P_1$ and $P_2$ be the intersection points of $\Lambda$ with the horizontal lines $\{w=w_e(-1)-\delta\}$ and $\{w=w_e(-1)+\delta\}$, respectively, that is, 
\[P_1=(\gamma(w_e(-1)-\delta) - \beta, w_e(-1)-\delta),\qquad P_2=(\gamma(w_e(-1)+\delta) - \beta, w_e(-1)+\delta).\] 

Let $H$ be the bounded region whose boundary is made of the following six curves  (illustrated in Figure~\ref{fig:figure4_construction_H}):
\begin{itemize}
\item $S_1$ is the horizontal line segment between $P_1$ and $(0,w_e(-1)-\delta)$;
\item  $S_2$ is the vertical segment between $(0, w_e(-1) - \delta)$ and $(0,w_m(1))$;
\item  $S_3$ is the line segment between $(0, w_m(1))$ and $P_2$; 
\item  $S_4$ is the horizontal line segment between $P_2$
and
$(p,w_e(-1) + \delta)$, where 
$p\in \R$ is the leftmost solution of the equation 
$g_1(p, w_e(1)+\delta,-1) = 0$, i.e., $(p, w_e(1)+\delta)$ is the leftmost intersection point of the cubic $C_{-1}$ with the horizontal line $\{w=w_e(1)+\delta\}$;
\item  $S_5$ is the vertical line segment between $(p, w_e(-1) + \delta)$ and $(p,q)$, where $q$ is such that $(p,q)$ is the first point of intersection of the vertical line $\{v=p\}$ with the integral curve of the vector field $-G_{-1}$ that starts from $P_1$, where $\varepsilon_0$ is chosen so small that, for every $\varepsilon\in (0,\varepsilon_0)$, when integrating $-G_{-1}$ we cross $\{v = p\}$ before $\{w = w_e(-1) + \delta\}$;
\item  $S_6$ is the arc of the integral curve of the vector field 
$G_{-1}$ that connects $(p,q)$ and $P_1$.
\end{itemize}

\begin{figure}[ht]
    \centering
    \begin{tikzpicture}
    \node[anchor=south west, inner sep=0cm] (image1) at (0,0)
    {\includegraphics[width=0.47\linewidth]{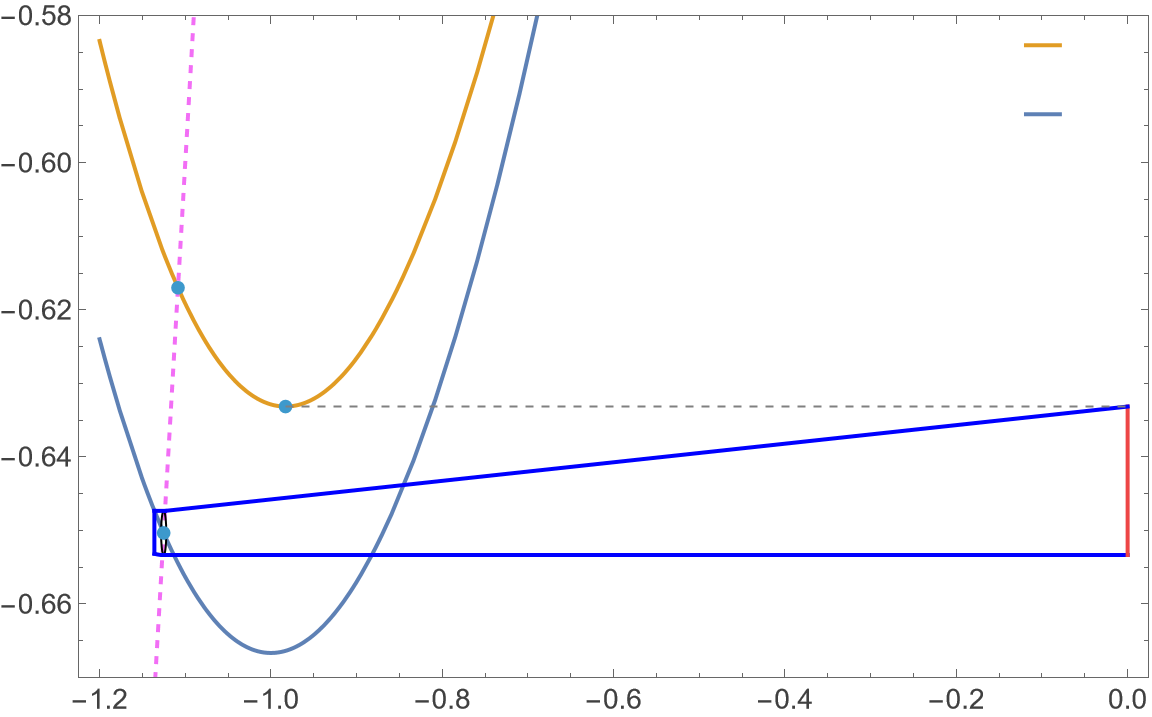}};
    \begin{scope}[
    shift={(image1.south west)},
    x={(image1.south east)}, 
    y={(image1.north west)}]
      %\draw[step=0.1, gray, very thin] (0,0) grid (1,1);
      \node[black!60] at (0.52,0.18) {\small $S_1$};
      \node[black!60] at (0.95,0.32) {\small $S_2$};
      \node[black!60] at (0.52,0.40) {\small $S_3$};
      \node at (0.19,0.81) {$\Lambda$};
      \node at (0.95,0.93) {\small $C_1$};
      \node at (0.955,0.83) {\small $C_{\text{-}1}$};
      \node[
        fill=white, 
        fill opacity=0.6, 
        text opacity=1, 
        draw=none, 
        thick,
        rounded corners,
        inner sep=1pt
      ]  at (0.27,0.60) {\tiny $\left(v_e(1),w_e(1)\right)$};
      \node[
        fill=white, 
        fill opacity=0.6, 
        text opacity=1, 
        draw=none, 
        thick,
        rounded corners,
        inner sep=1pt
      ]  at (0.25,0.41) {\tiny $\left(v_m(1),w_m(1)\right)$};
      \node[
        fill=white, 
        fill opacity=0.6, 
        text opacity=1, 
        draw=none, 
        thick,
        rounded corners,
        inner sep=1pt
      ]  at (0.27,0.26) {\tiny $\left(v_e(\text{-}1),w_e(\text{-}1)\right)$};
    \end{scope}
    
    \node[anchor=south west, inner sep=0cm] (image2) at (0.48\linewidth,0)
    {\includegraphics[width=0.47\linewidth]{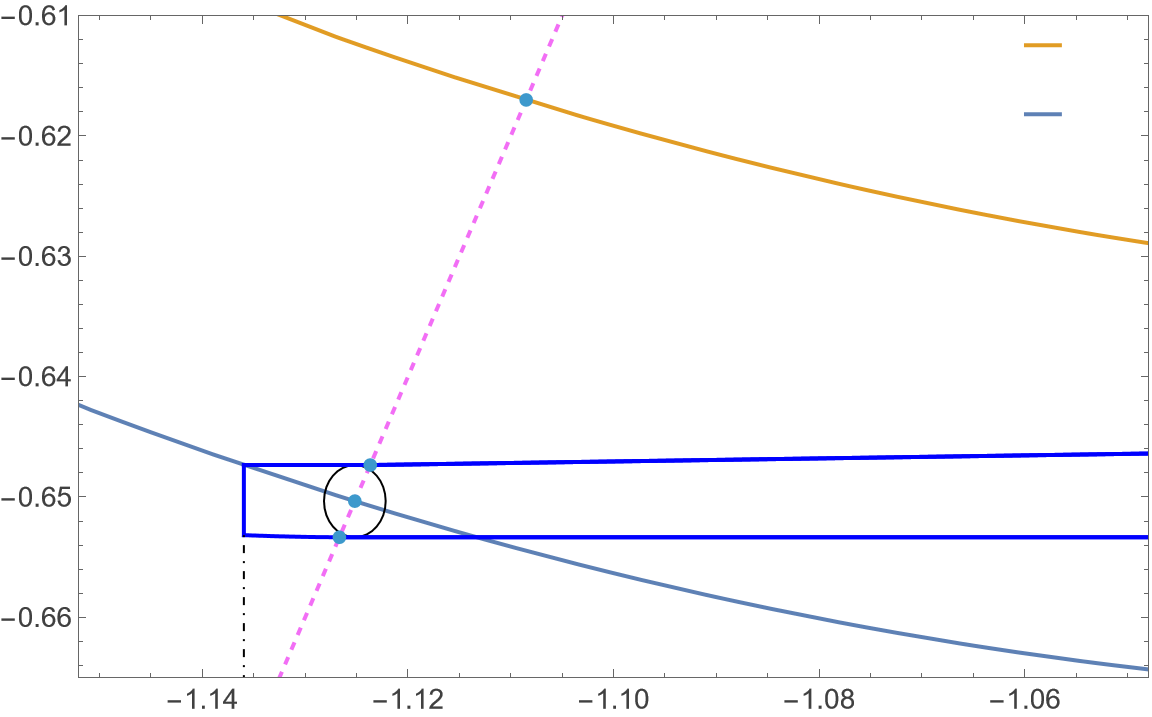}};
    \begin{scope}[
    shift={(image2.south west)},
    x={(image2.south east)}, 
    y={(image2.north west)}]
      %\draw[step=0.1, gray, very thin] (0,0) grid (1,1);
      \node at (0.18,0.1) {$p$};
      \node at (0.4,0.2) {$P_1$};
      \node at (0.42,0.4) {$P_2$};
      \node[black!60] at (0.71,0.21) {\small $S_1$};
      \node[black!60] at (0.71,0.395) {\small $S_3$};
      \node[black!60] at (0.26,0.39) {\small $S_4$};
      \node[black!60] at (0.18,0.3) {\small $S_5$};
      \node[black!60] at (0.25,0.21) {\small $S_6$};
      \node at (0.35,0.62) {$\Lambda$};
      \node at (0.95,0.93) {\small $C_1$};
      \node at (0.955,0.83) {\small $C_{\text{-}1}$};
      \node[
        fill=white, 
        fill opacity=0.5, 
        text opacity=1, 
        draw=none, 
        thick,
        rounded corners,
        inner sep=0pt
      ] at (0.58,0.87) {\tiny $\left(v_e(1),w_e(1)\right)$};
      \node at (0.46,0.3) {\tiny $\left(v_e(\text{-}1),w_e(\text{-}1)\right)$};
      \draw[-{Latex[length=3mm,width=2mm]}, thick]
      (0.36,0.2) -- (0.3,0.25);
      \draw[-{Latex[length=3mm,width=2mm]}, thick]
      (0.4,0.4) -- (0.32,0.35);

    \end{scope}
    \node at (3.8cm,-0.3cm) {\Large $v$ axis};
    \node at (12cm,-0.3cm) {\Large $v$ axis};
    \node[rotate=90] at (-0.4cm, 2.5cm) {\Large $w$ axis};
    \end{tikzpicture}
    \caption{Region $H$ used in the proof of Theorem~\ref{thm_activation_piecewise_constant}. The second picture is a zoomed-in picture near the point $(v_e(-1), w_e(-1))$}
    \label{fig:figure4_construction_H}
\end{figure}

Next, we show that the region $H$ satisfies our desired property. 
Recall that, by definition of $t^*$,  $s(t) = 1$ for $t\in (t^*,t^* + \pi/\eta)$. 
Let $t^*+T_0$ be the 
exit time from $H$ of the integral line of $G_1$ starting from 
$(v(t^*), w(t^*))$, that is,
$T_0=\inf\{t>0 \mid e^{t G_1} (v(t^*), w(t^*))\notin H\}$. 
We are left to prove that $T_0< \pi/\eta$ for $\eta$ small enough. 

Notice that the vector field $G_1$ points towards the interior of $H$ along $S_1$ (since $g_2>0$ there),  $S_4$ (since $g_2<0$ there), $S_5$ (since $g_1(\cdot,1)>0$ there), and $S_6$ (since the slope of $G_1$ is less negative than that of $G_{-1}$ there). 
The same is true also for $S_3$, up to further reducing $\varepsilon_0$. 
Hence, every integral curve of $G_1$ starting from the region $H$ can exit it only by crossing $S_2$. Since the region $H$ does not intersect the cubic $C_1$, we know that the horizontal speed of $G_{-1}$ is bounded below in $H$ by a positive constant $\xi=\xi(\varepsilon)$, and therefore, we can estimate the time required to leave  $H$ by
$$T_0 \leq \frac{|p|}{\xi}=:T^*.$$
We deduce that $T^*<\pi/\eta$ for all $\eta<  \pi/T^*=:\eta_0(\varepsilon)$.

\section{Tonic spiking under temporal interference stimulation}\label{section_escaping_condition}

In this section, we explore the case when the input signal is the smooth one resulting from temporal interference ($f(t)=\cos(\eta t)$) and when both
$\eta$ and $\varepsilon$ go to zero and are of the same order. We therefore tightly link the TIS frequency $\eta$ with the time constant $\varepsilon$ of ionic channels. The strong dependency between $\eta$ and $\varepsilon$ needed to elicit neuronal response has already been observed and described \cite{Mirzakhalili2020,plovieNonlinearitiesTimescalesNeural2025}, which depends on the time constant of the potassium channel. 

More precisely, we set $\eta = \kappa \varepsilon$ for some positive constant $\kappa$, and we study the behavior of \eqref{FHN_intro} for $\varepsilon$ small. For the constructions, we require $(A,B)\in \mathcal{E}_0$. Considering the change of variables $t = \varepsilon s$, we get the system 
\begin{equation}\label{limit_system_singular0}
\left\{
\begin{array}{rl}
    \varepsilon \frac{d}{ds} v &= v\left(1-\frac{A^2}{2}-\frac{B^2}{2}- A B \cos(\kappa s)\right) - \frac{v^3}{3} - w,\\
    \frac{d}{ds} w &= v - \gamma w + \beta,
\end{array}
\right.
\end{equation}
which leads,  as $\varepsilon \to 0$, to the singular limit system
\begin{equation}\label{limit_system_singular}
\left\{
\begin{array}{rl}
    0 &= v\left(1-\frac{A^2}{2}-\frac{B^2}{2}- A B \cos(\kappa s)\right) - \frac{v^3}{3} - w,\\
    \frac{d}{ds} w &= v - \gamma w + \beta,
\end{array}
\right.
\end{equation}
that we have begun studying in Section \ref{sec-3.2}.

System~\eqref{limit_system_singular} is made of an algebraic equation imposing that $(v(s),w(s))$ evolve on the cubic $C_{\cos(\kappa s)}$ and a differential equation on $w$. We will see that we can rather naturally identify a notion of solution to \eqref{limit_system_singular} in some regions of the plane and that such solutions are actually concatenations of integral lines of two vector fields $\Xup$ and $\Xdown$, corresponding to the intervals of time where $s\mapsto \cos(\kappa s)$ is increasing or decreasing, respectively.

We will explore how the trajectories of \eqref{limit_system_singular0} approach the boundary of the region where the dynamics of \eqref{limit_system_singular} are well-defined, and this analysis will yield heuristic conditions for the presence of tonic spiking. 
To help the reader track on which system each claim is made, we use $(v_\varepsilon,w_\varepsilon)$ to denote the trajectories of \eqref{limit_system_singular0}.

\subsection{Identification of the vector fields \texorpdfstring{$\Xup$ and $\Xdown$}{X up and X down}}

Consider a time $s_0$ and a point $(v_0,w_0)$ in $C_{\cos(\kappa s_0)}\setminus \{\pm(v_m(\cos(\kappa s_0)),w_m(\cos(\kappa s_0)))\}$. By the inverse function theorem, 
there exists a unique smooth function $s\mapsto (v(s),w(s))$ in a  neighborhood of $s_0$ that satisfies system \eqref{limit_system_singular}.
In more detail, since such a smooth solution satisfies
\[w(s)= v(s)\left(1-\frac{A^2}2-\frac{B^2}2- A B \cos(\kappa s)\right) - \frac{v(s)^3}3,\]
and using the differential equation for $w$, we have
\[v(s)-\gamma w(s)+\beta=\dot v(s)\left(1-\frac{A^2}2-\frac{B^2}2- A B \cos(\kappa s)\right)
+\kappa A B v(s)\sin(\kappa s)
- \dot v(s) v(s)^2,\]
from which we obtain
\[\dot v(s)=\frac{v(s)-\gamma w(s)+\beta-\kappa A B v(s)\sin(\kappa s)}{\left(1-\frac{A^2}2-\frac{B^2}2- A B \cos(\kappa s)\right)-v(s)^2}=\frac{v(s)-\gamma w(s)+\beta-\kappa A B v(s)\sin(\kappa s)}{r(\cos(\kappa s))-v(s)^2}.\]
Notice that the denominator does not vanish as long as $(v(s),w(s))$ stays away from the local maximum and minimum of $C_{\cos(\kappa s)}$. 
This justifies the following definition. 

\begin{definition}
A \emph{regular solution of \eqref{limit_system_singular}} is a $C^1$ curve
$I\ni s\mapsto (v(s),w(s))$, where $I$ is a nonempty interval of $\R$,   which stays at positive distance from $J_m\cup(-J_m)$ and satisfies \eqref{limit_system_singular} at every $s\in I$. 
\end{definition}

Consider the function $\mathfrak{c}:(\cup_{c\in[-1,1]}C_c)\setminus \{(0,0)\}\to [-1,1]$  associating with a point $(\bar v,\bar w)$ the unique $c$ such that $(\bar v,\bar w)\in C_c$, that is,
\[\mathfrak{c}(\bar v,\bar w):=\frac{-\bar w+\bar v\left(1-\frac{A^2}2-\frac{B^2}2\right)-\frac{\bar v^3}{3}}{\bar v AB }.\]
Notice also that $\sin(\kappa s)$ is equal to $\sqrt{1-\cos(\kappa s)^2}$ (respectively, $-\sqrt{1-\cos(\kappa s)^2}$) when $\tau\mapsto \cos(\kappa \tau)$ is nonincreasing (respectively, nondecreasing) at $\tau=s$. 
Consider then the vector fields
\[\Xdown(v,w)=\begin{pmatrix}
\frac{v-\gamma w+\beta-\kappa A B v\sqrt{1-\mathfrak{c}(v,w)^2}}{
r(\mathfrak{c}(v,w))
-v^2}\\
v-\gamma w+\beta
\end{pmatrix},\qquad \Xup(v,w)=\begin{pmatrix}
\frac{v-\gamma w+\beta+\kappa A B v\sqrt{1-\mathfrak{c}(v,w)^2}}{
r(\mathfrak{c}(v,w))
-v^2}\\
v-\gamma w+\beta
\end{pmatrix},
\]
that are well defined on $\Omega=(\cup_{c\in[-1,1]}C_c)\setminus \left(\{(0,0)\}\cup J_m \cup -J_m\right)$. Based on the discussion above, we have the following result. 

\begin{lemma}
Let $I$ be an interval of $\R$ and 
$I\ni s\mapsto (v(s),w(s))$ be a regular solution of  \eqref{limit_system_singular}. 
Then $I\ni s\mapsto (v(s),w(s))$ is 
the concatenation of integral curves of $\Xup$ (on subintervals of $I$ where $\sin(\kappa s)\le 0$) and $\Xdown$ (on subintervals of $I$ where $\sin(\kappa s)\ge 0$). 
\end{lemma}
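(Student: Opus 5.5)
The plan is to show that, on every subinterval of $I$ where $\sin(\kappa s)$ has constant sign, the regular solution satisfies $\frac{d}{ds}(v,w)=\Xdown(v,w)$ (where $\sin\ge0$) or $\Xup(v,w)$ (where $\sin\le 0$). The steps are: differentiate the algebraic constraint, solve for $\dot v$ using that the solution avoids $J_m\cup(-J_m)$, and replace the explicit time dependence by the state-dependent quantity $\mathfrak c(v,w)$.

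First, fix $s\in I$. Since $(v,w)$ is $C^1$ and satisfies $w(s)=v(s)\,r(\cos(\kappa s))-v(s)^3/3$, I differentiate this identity in $s$. Substituting $\dot w=v-\gamma w+\beta$ gives
\[
\dot v(s)\bigl(r(\cos(\kappa s))-v(s)^2\bigr)=v(s)-\gamma w(s)+\beta-\kappa AB\,v(s)\sin(\kappa s),
\]
exactly as in the computation above the definition. The coefficient $r(\cos(\kappa s))-v(s)^2$ vanishes only when $v(s)=\pm v_m(\cos(\kappa s))$, that is, when $(v(s),w(s))\in \pm J_m$. The regularity assumption excludes this, so we may divide and obtain the explicit formula for $\dot v$.

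Second, I replace $\cos(\kappa s)$ by $\mathfrak c(v(s),w(s))$. For $v(s)\neq 0$, the point lies on $C_c$ for the unique $c=\mathfrak c(v,w)$, because $c\mapsto g_1(v,w,c)$ is strictly monotone (affine with slope $-ABv\neq0$). Since the point lies on $C_{\cos(\kappa s)}$, we get $\mathfrak c(v(s),w(s))=\cos(\kappa s)$. Then $\sin(\kappa s)=\pm\sqrt{1-\mathfrak c^2}$, with sign $+$ on subintervals where $\sin(\kappa s)\ge0$ and sign $-$ where $\sin(\kappa s)\le 0$. With these substitutions the formula for $(\dot v,\dot w)$ is exactly $\Xdown$, respectively $\Xup$. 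Finally, $I$ splits into consecutive subintervals, separated by the zeros $s\in\frac{\pi}{\kappa}\mathbb Z$, on each of which the sign of $\sin$ is constant. On each such subinterval the curve is an integral curve of the corresponding field, which gives the claimed concatenation. At a junction point both choices agree since $\sin=0$, so there is no ambiguity.

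The main obstacle is the point $(0,0)$, where $\mathfrak c$ is undefined and the fields are not defined. Every $C_c$ passes through the origin, so I must show a regular solution cannot sit there on a set that matters. If $v(s_1)=0$, then $w(s_1)=0$, and the equation gives $\dot v(s_1)=\beta/r(\cos(\kappa s_1))\neq 0$, since $r>0$ on $[-1,1]$ because $A+B<\sqrt2$. Hence zeros of $v$ are isolated, and the integral-curve identity holds on each open piece avoiding them. It then extends by continuity of the $C^1$ solution, which is consistent with interpreting the concatenation on the domain $\Omega$ where the fields are defined.
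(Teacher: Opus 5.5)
Your proposal is correct and follows essentially the same route as the paper: the paper derives the formula for $\dot v$ from the algebraic constraint, writes $\sin(\kappa s)=\pm\sqrt{1-\mathfrak{c}(v,w)^2}$ according to the monotonicity of $\cos(\kappa s)$, and treats the origin as the only delicate point. The one small difference is that the paper shows the times at which the solution passes through $(0,0)$ are isolated by using $\dot w=\beta>0$ there, which needs no information on $r$, whereas you use $\dot v=\beta/r(\cos(\kappa s_1))\neq 0$; both arguments work.
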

\begin{proof}
The only point that still
needs to be clarified is what happens when $(v(s_0),w(s_0))=(0,0)$ for some $s_0\in I$. At that point $\mathfrak{c}$ stops being well defined, but the implicit function argument is still valid, and $\dot w>0$, so that $s_0$ is isolated among times at which $(v,w)$ passes through the origin. The conclusion then follows. 
\end{proof}

The qualitative phase portraits of $\Xdown$ and $\Xup$ are plotted in Figure~\ref{fig:Xupdown}.
Notice that $\dot w$ is positive below $\Lambda$ and negative above it. This is reflected by the fact that the integral curves of both $\Xdown$ and $\Xup$ point upwards below $\Lambda$ and downwards above it. 
On $J_e$, both $\Xdown$ and $\Xup$ are horizontal: $\Xdown$ points to the right, while $\Xup$ points to the left, as it can be easily deduced from the definition of the two vector fields. 

\begin{figure}[ht]
  \centering
  \begin{tikzpicture}
    \node[anchor=south west, inner sep=0cm] (image1) at (0,0)
    {\includegraphics[width=0.22\linewidth]{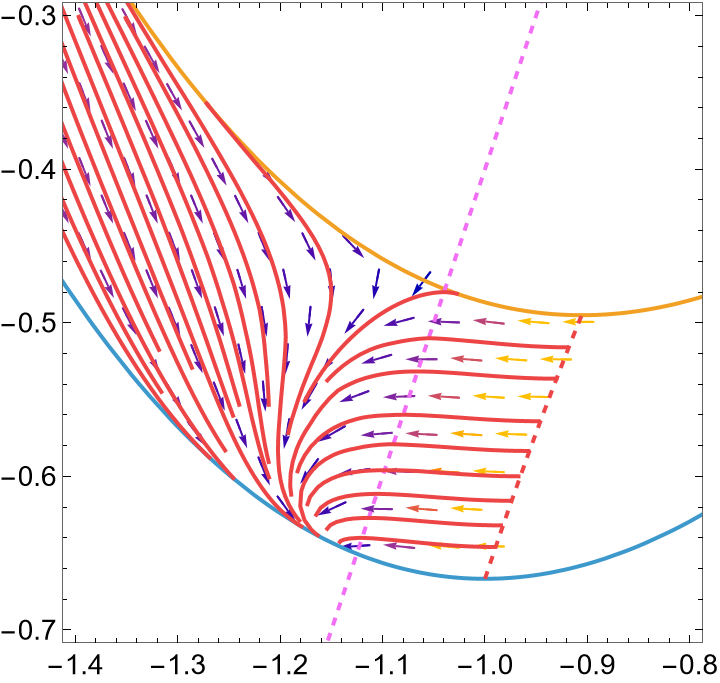}};
    \begin{scope}[
    shift={(image1.south west)},
    x={(image1.south east)}, 
    y={(image1.north west)}]
      %\draw[step=0.1, gray, very thin] (0,0) grid (1,1);
      \node at (0.65,0.85) {$\Lambda$};
      \node at (0.75,0.62) {$C_1$};
      \node at (0.75,0.105) {$C_{\text{-}1}$};
      \node at (0.22,0.16) {\Large $\Xdown$};
      \node at (0.81,0.35) {$J_m$};
    \end{scope}

    \node[anchor=south west, inner sep=0cm] (image2) at (0.24\linewidth,0)
    {\includegraphics[width=0.22\linewidth]{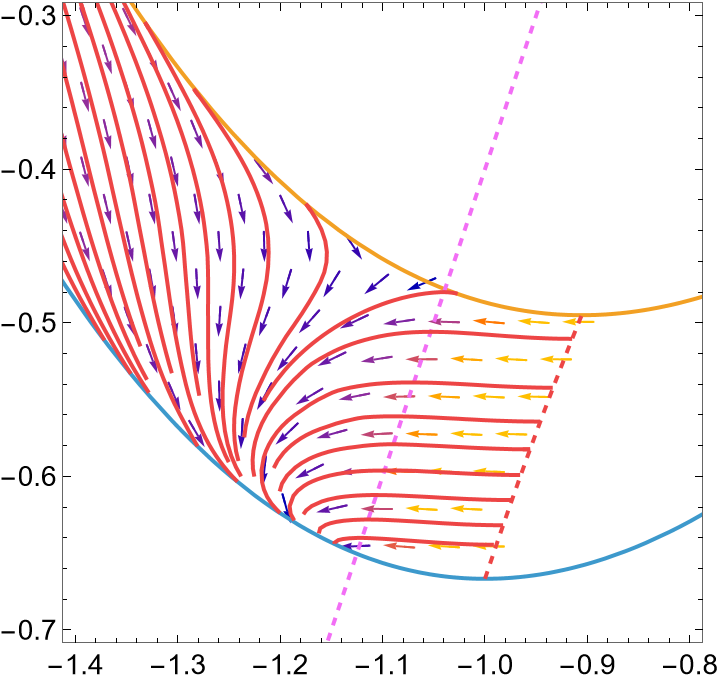}};
    \begin{scope}[
    shift={(image2.south west)},
    x={(image2.south east)}, 
    y={(image2.north west)}]
      %\draw[step=0.1, gray, very thin] (0,0) grid (1,1);
      \node at (0.65,0.85) {$\Lambda$};
      \node at (0.75,0.62) {$C_1$};
      \node at (0.75,0.105) {$C_{\text{-}1}$};
      \node at (0.22,0.16) {\Large $\Xdown$};
      \node at (0.81,0.35) {$J_m$};
    \end{scope}

    \node[anchor=south west, inner sep=0cm] (image3) at (0.48\linewidth,0)
    {\includegraphics[width=0.22\linewidth]{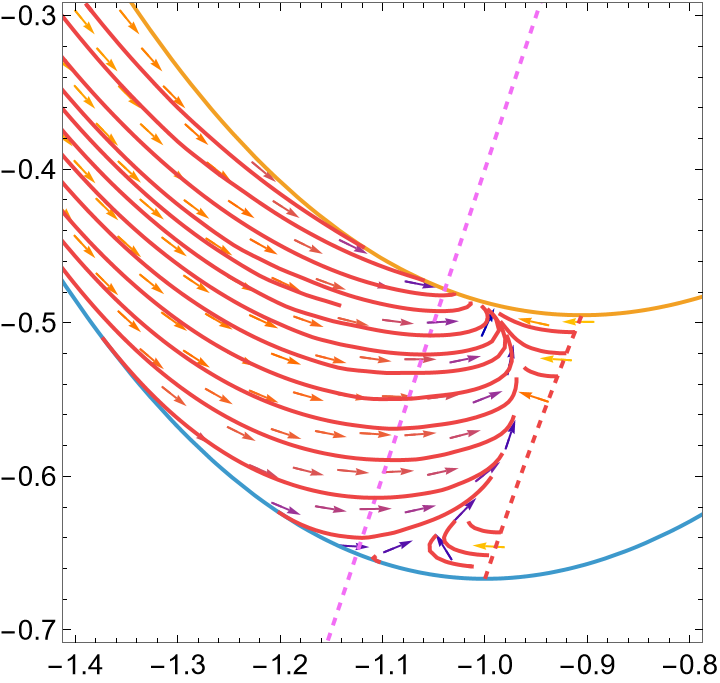}};
    \begin{scope}[
    shift={(image3.south west)},
    x={(image3.south east)}, 
    y={(image3.north west)}]
      %\draw[step=0.1, gray, very thin] (0,0) grid (1,1);
      \node at (0.65,0.85) {$\Lambda$};
      \node at (0.75,0.62) {$C_1$};
      \node at (0.75,0.105) {$C_{\text{-}1}$};
      \node at (0.22,0.16) {\Large $\Xup$};
      \node at (0.81,0.35) {$J_m$};
    \end{scope}

    \node[anchor=south west, inner sep=0cm] (image4) at (0.72\linewidth,0)
    {\includegraphics[width=0.22\linewidth]{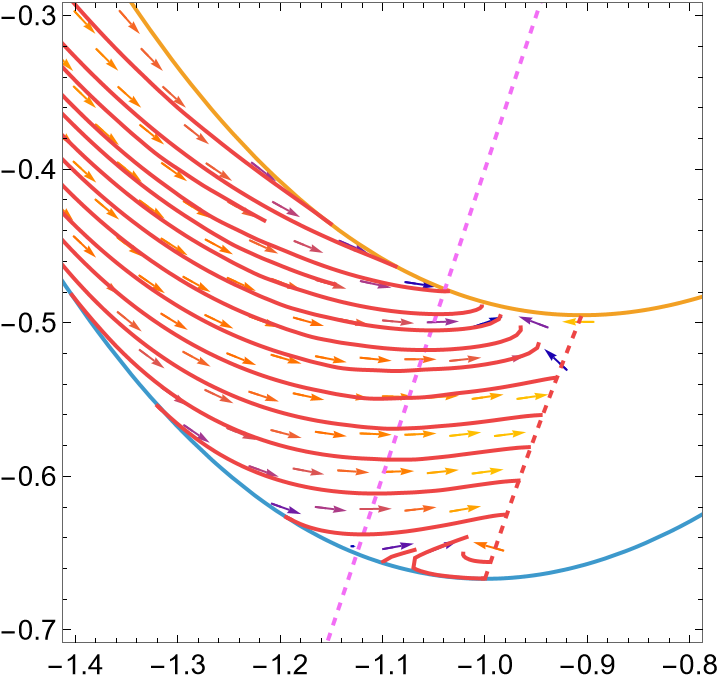}};
    \begin{scope}[
    shift={(image4.south west)},
    x={(image4.south east)}, 
    y={(image4.north west)}]
      %\draw[step=0.1, gray, very thin] (0,0) grid (1,1);
      \node at (0.65,0.85) {$\Lambda$};
      \node at (0.75,0.62) {$C_1$};
      \node at (0.75,0.105) {$C_{\text{-}1}$};
      \node at (0.22,0.16) {\Large $\Xdown$};
      \node at (0.81,0.35) {$J_m$};
    \end{scope}
    \node at (7.5cm,-0.3cm) {\Large $v$ axis};
    \node[rotate=90] at (-0.4cm, 1.5cm) {\Large $w$ axis};
    % \node at (2cm, 4cm) {$\kappa=1.3$};
    % \node at (6.2cm, 4cm) {$\kappa=2$};
    % \node at (10.3cm, 4cm) {$\kappa=1.3$};
    % \node at (14.3cm, 4cm) {$\kappa=2$};
    \node at (-0.015\linewidth, 2.9cm) {(a)};
    \node at (0.23\linewidth, 2.9cm) {(b)};
    \node at (0.47\linewidth, 2.9cm) {(c)};
    \node at (0.71\linewidth, 2.9cm) {(d)};
  \end{tikzpicture}
    
  \caption{Vector plot of $\Xdown$ and $\Xup$ together with some trajectories of the system. (a)-(d) use $\gamma = 0.5$, $\beta = 0.8$, $A= B =0.3$. (a) correspond to $\Xdown$ with $\kappa = 1.3$, (b) $\Xdown$  with $\kappa = 2$, (c) $\Xup$  with $\kappa = 1.3$, (d) $\Xup$  with $\kappa = 2$,
     \label{fig:Xupdown}}
\end{figure}

A useful feature of the vector fields $\Xup$ and $\Xdown$  is the following monotonicity property.
\begin{lemma}[Comparison of trajectories]\label{lemma_comparison}
   Let $\beta,\gamma, \kappa >0$, $A, B\in \mathcal{E}_0$. Let $(v_1,w_1), (v_2,w_2)$ be two regular solutions of system \eqref{limit_system_singular} on an interval $(a,b)$.
 Suppose that 
    $v_1(s_0)<v_2(s_0) <v_m(\cos(\kappa s_0))$ for some $s_0\in(a,b)$. Then 
    $$v_1(t) < v_2(t),\quad \forall t\in(a,b).$$ 
\end{lemma}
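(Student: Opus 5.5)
The plan is to reduce the comparison to uniqueness for a scalar non-autonomous ODE in the variable $v$ alone, and then conclude with a connectedness argument. Set $h(v,s):=v\,r(\cos(\kappa s))-\frac{v^3}{3}$. For any regular solution, the algebraic equation in \eqref{limit_system_singular} gives $w(s)=h(v(s),s)$, so the $w$-component is completely determined by $v$ and $s$. The computation preceding the definition of regular solutions then shows that $v$ solves
\[
\dot v = F(s,v):=\frac{v-\gamma h(v,s)+\beta-\kappa AB\, v\sin(\kappa s)}{r(\cos(\kappa s))-v^2}
\]
at every $s\in(a,b)$. This identity uses only the algebraic constraint, the differential equation for $w$, and the fact that the denominator does not vanish.

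The first step is to check that $F$ is smooth, hence locally Lipschitz in $v$, on the open set $U:=\{(s,v)\mid r(\cos(\kappa s))\neq v^2\}$. The condition $r(\cos(\kappa s))=v^2$ with $w=h(v,s)$ says exactly that $(v,w)=\pm(v_m(\cos(\kappa s)),w_m(\cos(\kappa s)))$, that is, the point lies in $J_m\cup(-J_m)$. A regular solution stays at positive distance from $J_m\cup(-J_m)$, so the graphs $\{(s,v_i(s))\}$, $i=1,2$, lie in $U$. Note that passing through the origin causes no trouble here, since $r>0$ there.

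The second step is the uniqueness argument. Let $Z:=\{t\in(a,b)\mid v_1(t)=v_2(t)\}$. This set is closed in $(a,b)$ by continuity. It is also open: if $t\in Z$, then $(t,v_1(t))\in U$, and Cauchy--Lipschitz uniqueness for $\dot v=F(s,v)$ gives $v_1=v_2$ on a neighborhood of $t$. Since $(a,b)$ is connected, either $Z=\emptyset$ or $Z=(a,b)$. The second case is excluded by $v_1(s_0)<v_2(s_0)$. Hence $v_1-v_2$ is continuous and never zero on $(a,b)$, and it is negative at $s_0$, so $v_1<v_2$ throughout. As a byproduct, $w_1(t)=w_2(t)$ can never hold together with $v_1(t)=v_2(t)$, so the two curves never meet.

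The hypothesis $v_2(s_0)<v_m(\cos(\kappa s_0))$ is not essential for the ordering itself. I would use it only to add an interpretive remark, via the same continuity argument: $s\mapsto v_i(s)-v_m(\cos(\kappa s))$ cannot vanish without the curve hitting $J_m$, so both solutions remain on the left (decreasing) branch of the moving cubic. On that branch $v\mapsto h(v,s)$ is decreasing, so the ordering in $v$ is equivalent to the reverse ordering $w_1>w_2$.

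The only delicate point is justifying the reduction: that a $C^1$ regular solution genuinely satisfies the scalar ODE everywhere, with $F$ Lipschitz along both graphs simultaneously. This is where the positive-distance requirement in the definition of a regular solution is used, and I expect it to need no further work beyond what the paper has already established.
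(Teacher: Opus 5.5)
Your argument is correct. It uses the same mechanism as the paper's proof: uniqueness forbids the two solutions from meeting, so a one-dimensional ordering is preserved by continuity. The implementation, however, is different.

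The paper works with the autonomous planar fields $\Xup$ and $\Xdown$. A regular solution is a concatenation of their integral curves, and their flows carry cubics onto cubics. Hence at each time both solutions lie on the same cubic, integral curves cannot cross, and the order along that one-dimensional curve is preserved. You instead keep time explicit and reduce to the scalar equation $\dot v=F(s,v)$. Its right-hand side is smooth on $U$ because $\sin(\kappa s)$ enters as a given function, not through $\sqrt{1-\mathfrak{c}(v,w)^2}$.

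This gives you some extra rigor. The function $\mathfrak{c}$ is undefined at the origin, and the square root is not Lipschitz on $C_{\pm 1}$. For instance, $\Xup$ is tangent to $C_{-1}$, so through a point of $C_{-1}$ there is one integral curve of $\Xup$ staying on $C_{-1}$ and another leaving it. The paper's appeal to uniqueness therefore needs care exactly at $v=0$ and at the switching times, whereas your argument needs no case distinction.

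Your route also shows, as you note, that the hypothesis $v_2(s_0)<v_m(\cos(\kappa s_0))$ is not needed for the ordering itself. It only places both solutions on the attracting left branch, where $v_1<v_2$ is equivalent to $w_1>w_2$.

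In return, the paper's geometric phrasing matches how the lemma is applied later, namely to integral curves of $\Xup$ and $\Xdown$ starting from points of a common cubic (Lemma~\ref{lemma_non_spiking} and Section~\ref{sec:heuristics}).
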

\begin{proof}
 The proof is an immediate consequence of the fact that the flow of $\Xup$ or $\Xdown$ maps a cubic $C_{c_1}$ into another cubic $C_{c_2}$. Additionally, the trajectories cannot cross because the solution to the corresponding ODE is unique. Finally, because the cubics are 1D manifolds, 
 the evolution preserves the order relation on them.
\end{proof}

\subsection{Relating trajectories\texorpdfstring{ of \eqref{limit_system_singular0} and \eqref{limit_system_singular}}{}}

\subsubsection{Tracking regular trajectories \texorpdfstring{ of \eqref{limit_system_singular} by trajectories of  \eqref{limit_system_singular0}}{}}\label{sec:tracking}

A key fact that makes \eqref{limit_system_singular} an informative limit system for \eqref{limit_system_singular0} is that the trajectories of the latter approximate regular trajectories of the former as $\varepsilon$ goes to zero, under suitable conditions.

Intuitively, 
for a trajectory $(v_\varepsilon(\cdot),w_\varepsilon(\cdot))$ of  \eqref{limit_system_singular0},
$|\dot v_\varepsilon(s)|$ is of order $1/\varepsilon$ if $(v_\varepsilon(s),w_\varepsilon(s))$ is not close to $C_{\cos(\kappa s)}$. 
Some branches of $C_{\cos(\kappa s)}$ are attractive for these dynamics, namely those where 
$|v|>|v_m(\cos(\kappa s)|$, while the  portion of $C_{\cos(\kappa s)}$ 
where $|v|<|v_m(\cos(\kappa s)|$
is repulsive. 
As a consequence, 
regular trajectories of \eqref{limit_system_singular} included in the region covered by the attractive branches of the cubics
%, we expect that they 
can be tracked with arbitrary precision by trajectories of  \eqref{limit_system_singular0}.  
 More precisely, classical singular perturbation results (see, e.g., 
 \cite{Tihonov1952,Vasileva})
% \cite[Theorem 1.1]{Vasileva}) 
imply the following. 
  
\begin{lemma}\label{lem:tracking}
Let $[s_0,s_1]\ni s\mapsto (v(s),w(s))$
be a regular solution of \eqref{limit_system_singular}
satisfying $v(s)<v_m(\cos(\kappa s))$ for every $s\in [s_0,s_1]$.
Let $(v_0,w_0)$ be such that 
$w_0=w(s_0)$ and $v_0<v_m(\cos(\kappa s_0))$. 
For every $\varepsilon>0$ let  $[s_0,s_1]\ni s\mapsto (v_\varepsilon(s),w_\varepsilon(s))$ be the trajectory of  \eqref{limit_system_singular0} with initial condition 
$(v_0,w_0)$.
Then 
$v_\varepsilon(s)<0$ for every $s\in [s_0,s_1)$ for $\varepsilon$ small enough
and, given any $\tilde s_0\in (s_0,s_1)$, 
$\sup_{s\in [\tilde s_0,s_1]}|(v_\varepsilon(s),w_\varepsilon(s))-(v(s),w(s))|\to 0$ as $\varepsilon\to 0$, locally uniformly with respect to $s_0$ (determining $w_0$ and the choice of the regular trajectory) and $v_0$.
\end{lemma}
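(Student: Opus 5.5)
The plan is to reduce Lemma~\ref{lem:tracking} to Tikhonov's theorem on singularly perturbed systems with an asymptotically stable slow manifold, as in \cite{Tihonov1952,Vasileva}. First we make the system autonomous by adding the variable $\theta=s$ with $\frac{d}{ds}\theta=1$. Then \eqref{limit_system_singular0} takes the standard fast--slow form: the fast variable is $v$, the slow variables are $(w,\theta)$, and the fast equation is $\varepsilon \dot v = g_1(v,w,\cos(\kappa\theta))$.

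The degenerate (slow) manifold is the set where $g_1=0$. We restrict attention to the root branch $v=\phi(w,\theta)$ that lies on the left attractive branch of $C_{\cos(\kappa\theta)}$, namely the part with $v<v_m(\cos(\kappa\theta))$.

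\emph{Step 1 (stability of the root).} On this branch we have
\[
\partial_v g_1 = r(\cos\kappa\theta)-v^2 < 0,
\]
because $v^2>v_m^2=r$. This gives uniform exponential stability of the boundary-layer equation $\frac{dv}{d\tau}=g_1(v,w,\cos\kappa\theta)$, with $(w,\theta)$ frozen, uniformly on a compact neighborhood of the given regular trajectory. That trajectory stays at positive distance from $J_m$ on $[s_0,s_1]$, so the derivative bound is uniform there.

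\emph{Step 2 (domain of attraction).} For fixed $(w,\theta)$, the scalar equation $\dot v = h(v,\theta)-w$ has $h$ decreasing on $(-\infty,v_m]$. Hence every $v_0<v_m(\cos\kappa s_0)$ satisfying $h(v_0)-w_0\ne0$ converges monotonically to the root $\phi(w_0,s_0)$. Such a $v_0$ therefore lies in the basin of the left root. Note that $w_0=w(s_0)<w_m$ cannot occur, since $(v(s_0),w(s_0))\in C$ with $v(s_0)<v_m$ forces $w_0>w_m(\cos\kappa s_0)$. The reduced problem is $\dot w=\phi(w,\theta)-\gamma w+\beta$, whose solution is exactly the given regular $w(\cdot)$ on $[s_0,s_1]$.

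\emph{Step 3 (apply Tikhonov).} Tikhonov's theorem then yields uniform convergence of $w_\varepsilon$ to $w$ on all of $[s_0,s_1]$. It also yields uniform convergence of $v_\varepsilon$ to $v$ on $[\tilde s_0,s_1]$ for each $\tilde s_0>s_0$; the initial layer prevents convergence at $s_0$ itself. Local uniformity in $(s_0,v_0)$ follows from continuous dependence of the reduced solution on the data together with compactness: the estimates in Tikhonov's proof depend only on the stability margin and on Lipschitz constants over a compact set.

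\emph{Sign claim.} It remains to show $v_\varepsilon(s)<0$ on $[s_0,s_1)$. During the initial layer, $v_\varepsilon$ moves monotonically between $v_0$ and roughly $\phi(w_0,s_0)$, both of which are negative (below $v_m<0$). After the layer, $v_\varepsilon$ is uniformly close to $v(s)\le v_m(\cos\kappa s)\le v_m(1)<0$. Making the layer argument uniform needs one more observation: for small $\varepsilon$, $v_\varepsilon$ cannot cross the curve $v=v_m(\cos\kappa s)$. Near that curve $g_1$ has the sign pushing toward the root, so $\{v\le v_m(\cos\kappa s)-\rho\}$ is essentially invariant once $\varepsilon\,\kappa AB$ is small.

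The main obstacle is this last uniform barrier argument near the initial time, where $v_0$ may be close to $v_m$. I would handle it by comparison with the frozen scalar ODE, using that $\dot w$ is $O(1)$ while $\dot v$ is $O(1/\varepsilon)$.
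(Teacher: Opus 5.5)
Your proposal is correct and follows the same route as the paper, which proves this lemma only by appeal to the classical Tikhonov--Vasil'eva theory \cite{Tihonov1952,Vasileva}. You additionally check that theory's hypotheses: the uniformly attracting left branch with $\partial_v g_1=r-v^2<0$, the whole half-line $v_0<v_m(\cos(\kappa s_0))$ lying in the basin of the frozen layer equation, and the reduced solution coinciding with the given regular trajectory. The barrier step you flag as the main obstacle is in fact immediate: on $\{v=v_m(\cos(\kappa s))\}$ one has $g_1=w_m(\cos(\kappa s))-w_\varepsilon(s)$, which is bounded above by a negative constant because $w(s)-w_m(\cos(\kappa s))$ has a positive minimum on $[s_0,s_1]$ and $w_\varepsilon\to w$ uniformly, so $v_\varepsilon(s)<v_m(\cos(\kappa s))\le v_m(1)<0$ on the whole interval for $\varepsilon$ small.
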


\subsubsection{Behavior near \texorpdfstring{$J_m$}{Jm}}

We consider here what happens when a regular solution of \eqref{limit_system_singular}  approaches $J_m$. 

Consider $(v_0,w_0)=(v_m(\cos(\kappa s_0)),w_m(\cos(\kappa s_0)))$. The quantity $v_0-\gamma w_0+\beta$ would  give the derivative of $w$ at $s_0$ if system \eqref{limit_system_singular} were satisfied with initial condition $(v(s_0),w(s_0))=(v_0,w_0)$. 
Another quantity that should be taken into account is the derivative of $s\mapsto w_m(\cos(\kappa s))$ at $s=s_0$. 
If the latter derivative is smaller than the former, i.e., if either $s\mapsto \cos(\kappa s)$ is decreasing at $s_0$ or if (cf.~\eqref{eq-vm-wm})
\begin{equation}\label{nonescaping_condition}
v_m(\cos(\kappa s_0)) A B \kappa \sin(\kappa s_0) < v_m(\cos(\kappa s_0)) - \gamma w_m(\cos(\kappa s_0)) + \beta,
\end{equation}
then we can associate with the same initial condition $(v(s_0),w(s_0))=(v_0,w_0)$ two distinct solutions of \eqref{limit_system_singular}, corresponding to the two branches of $C_{\cos(\kappa s_0)}$ that originate from $(v_0,w_0)$. If, instead, 
\begin{equation}\label{escaping_condition}
    v_m(\cos(\kappa s_0)) A B \kappa \sin(\kappa s_0) > v_m(\cos(\kappa s_0)) - \gamma w_m(\cos(\kappa s_0)) + \beta,
\end{equation}
then no continuous solution of \eqref{limit_system_singular} with $(v(s_0),w(s_0))=(v_0,w_0)$ exists. For $\varepsilon$ small, we expect that the solution $(v_\varepsilon(\cdot),w_\varepsilon(\cdot))$ of \eqref{limit_system_singular0} with initial condition $(v_\varepsilon(s_0),w_\varepsilon(s_0))=(v_0,w_0)$ joins almost horizontally the vertical axis, leading to an action potential.  
The following lemma formalizes this intuition.

\begin{lemma}[Local escaping]\label{lem:escaping}
Assume that $\cos(\kappa s)$ is increasing at $s=s_0$ and that \eqref{escaping_condition} holds. 
Then there exists a neighborhood $Q$ of $(v_m(\cos(\kappa s_0)),w_m(\cos(\kappa s_0)))$ such that for $\varepsilon$ small enough and 
for every $(v_0,w_0)\in Q$ the solution to \eqref{limit_system_singular0} with initial condition $(v_\varepsilon(s_0),w_\varepsilon(s_0))=(v_0,w_0)$ undergoes an action potential after $s_0$.     
\end{lemma}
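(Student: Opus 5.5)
We would prove Lemma~\ref{lem:escaping} by a barrier (trapping-region) argument in the time-dependent slow--fast system \eqref{limit_system_singular0}, written in the extended phase space $(s,v,w)$. Set $c_0=\cos(\kappa s_0)$ and $(v_0^m,w_0^m)=(v_m(c_0),w_m(c_0))$. Since $v_m<0$, condition \eqref{escaping_condition} says that $\frac{d}{ds}w_m(\cos(\kappa s))|_{s_0}=v_m' \cdot(-\kappa\sin(\kappa s_0))\cdot\ldots$; more usefully, it says that the minimum $w_m(\cos(\kappa s))$ of the moving cubic rises strictly faster than $w$ can rise near $(v_0^m,w_0^m)$, where $\dot w=g_2\approx g_2(v_0^m,w_0^m)$. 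By continuity, there exist $\sigma>0$, a small $\rho>0$ and a margin $\mu>0$ such that, for $s\in[s_0,s_0+\sigma]$ and $|(v,w)-(v_0^m,w_0^m)|\le\rho$,
\[
\tfrac{d}{ds}w_m(\cos(\kappa s))\ \ge\ g_2(v,w)+\mu .
\]
Take $Q$ to be the ball of radius $\rho/2$ around $(v_0^m,w_0^m)$.

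\textbf{Step 1 (the fast variable is pushed right once $w$ falls below the moving minimum).} Integrating the inequality above shows that, for any solution starting in $Q$ and staying in the $\rho$-ball, the gap $w_m(\cos(\kappa s))-w_\varepsilon(s)$ grows at rate at least $\mu$. Hence after a time $s_1-s_0\le \rho/(2\mu)+o(1)$, still inside $[s_0,s_0+\sigma]$ once $\rho$ is small, this gap is at least some $\theta>0$, uniformly in $\varepsilon$. Here we use that $\dot w$ is $O(1)$ in the slow time $s$, so $w$ moves by at most $O(\rho)$ while $v$ is still near $v_0^m$. The solution might exit the $\rho$-ball horizontally before then. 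If it exits to the left, it lands on an attracting branch, and we must rule this out. Heuristically this is impossible because the ball is below the attracting left branch $C_{\cos(\kappa s)}$ once $w<w_m$. More carefully, on $\{v\le v_m(\cos(\kappa s))\}$ with $w< w_m(\cos(\kappa s))$ we have $g_1>0$, since the point lies below the cubic, so $\varepsilon\dot v>0$. Thus a region of the form $\{v\ge v_0^m-\rho,\ w\le w_m(\cos(\kappa s))+\text{small}\}$ cannot be left through its left side. Points of $Q$ lying above the cubic near its minimum are dealt with by noting that $g_1\ge -C\rho^2$ there, while the cubic moves up at speed $\ge\mu$. We therefore choose $\rho$ so small that, in time $O(\rho)$, $w_\varepsilon$ ends up below $w_m$, and during that time $v$ drifts left by at most $O(\rho^2/\varepsilon)$ relative to \ldots. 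This is the delicate point; see below.

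\textbf{Step 2 (fast jump).} Once $w_\varepsilon(s_1)\le w_m(\cos(\kappa s_1))-\theta$, the cubic $C_{\cos(\kappa s)}$ has no point with $v<0$ and $w\le w_m-\theta/2$. Hence $g_1(v,w,\cos(\kappa s))\ge \lambda(\theta)>0$ on $\{v\le 0\}\cap\{w\le w_m(\cos(\kappa s))-\theta/2\}$. Moreover $w$ changes only by $O(s-s_1)$, while $\dot v\ge\lambda/\varepsilon$. Therefore $v_\varepsilon$ reaches $0$ within slow time $O(\varepsilon)$, during which $w_\varepsilon$ moves by $O(\varepsilon)<\theta/2$. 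This gives $v_\varepsilon(s)\ge0$ for some $s>s_0$, i.e.\ an action potential in the sense of Definition~\ref{def-spike}.

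\textbf{Main obstacle.} The hard part is Step 1 for initial points in $Q$ lying above the minimum, on or near the attracting left branch or the repelling middle branch. There the fast dynamics could drag $v$ leftwards and track the attracting branch, which itself moves upward and right as $c$ increases. To handle this, we would replace the ball by a thin region tied to the moving minimum, using local coordinates $v=v_m(\cos(\kappa s))+x$ and $w=w_m(\cos(\kappa s))+y$. In these coordinates $\varepsilon\dot x= -v_m x^2 - x^3/3 - y + O(\varepsilon)$ (up to the $c$-dependent terms) and $\dot y\le-\mu$. This is the normal form of a fold with $y$ decreasing through the fold value, i.e.\ a standard jump point. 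We would then either quote the classical fold-point (jump) result of Krupa--Szmolyan / Mishchenko--Rozov, or run a direct comparison with the Riccati equation $\varepsilon\dot x=a x^2+\mu(s-s_0)$, which blows up in time $O(\varepsilon^{1/3})$. Either route yields that every trajectory from $Q$, with $Q$ small, leaves a fixed neighborhood of the fold to the right with $w$ below $w_m-\theta$, after which Step 2 applies.
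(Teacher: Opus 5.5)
Your overall mechanism is right. The margin in \eqref{escaping_condition} makes the moving minimum $w_m(\cos(\kappa s))$ outrun $w_\varepsilon$, and once $w_\varepsilon$ is a fixed amount below the current minimum, your Step 2 (which is exactly the paper's reuse of the region $H$) finishes the argument. But Step 1 does not close, and the gap is the one you flag yourself.

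\textbf{The gap in Step 1.} Nothing prevents an initial point of $Q$ lying above the cubic from being captured by the attracting left branch. Your bound ``$v$ drifts left by at most $O(\rho^2/\varepsilon)$'' degenerates as $\varepsilon\to0$. The region $\{v\ge v_0^m-\rho,\ w\le w_m(\cos(\kappa s))+\text{small}\}$ has a non-crossable left side only if ``small'' is below roughly $|v_0^m|\rho^2$, the height of the cubic above its minimum at $v_0^m-\rho$. To contain $Q$, however, it must be at least $\rho/2$. The same mismatch of scales appears in your time estimate. In the time $\approx\rho/(2\mu)$ needed for the gap to become positive, $w_\varepsilon$ can rise by about $g_2\rho/(2\mu)$. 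This exceeds $\rho/2$ unless $\mu\ge g_2$, i.e.\ unless $\frac{d}{ds}w_m\ge 2g_2$, which \eqref{escaping_condition} does not give. So you cannot guarantee that the trajectory stays where your inequality holds.

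\textbf{The fallbacks.} Neither repairs this as stated. Krupa--Szmolyan and Mishchenko--Rozov are formulated for autonomous (typically planar) folds and describe orbits entering through a section transverse to the attracting slow manifold. Transferring them to the fold line of the extended system in $(x,y,s)$, and to every point of a fixed neighborhood of the fold uniformly in $\varepsilon$, is work you have not done. The Riccati comparison is only named; it would need the forcing $-y_0+\mu(s-s_0)$ and the same confinement that is missing.

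\textbf{The missing idea: a left barrier placed far from the fold relative to $Q$.}
\begin{enumerate}
\item Fix $\bar w\in(w_m(c_0),w_m(1))$ close to $w_m(c_0)$. Let $\bar s$ be the first $s>s_0$ with $w_m(\cos(\kappa s))=\bar w$, and let $v_1<v_2<0$ be the points where $C_{c_0}$ meets $\{w=\bar w\}$.
\item Since $c\mapsto r(c)v-v^3/3$ is increasing for $v<0$ and $\cos(\kappa s)$ increases on $[s_0,\bar s]$, you get $g_1>0$ on $\{v_1\}\times(-\infty,\bar w)$ at every such $s$ and for every $\varepsilon$. This wall stays below the attracting branch throughout.
\item In the box $[v_1,v_2]\times[w_m(c_0)-\delta,\bar w]$, the margin is the paper's $\nu>1$ with $\frac{d}{ds}w_m(\cos(\kappa s))\ge\nu g_2$ there. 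It integrates to $w_\varepsilon(s)-w_\varepsilon(s_0)\le\nu^{-1}\bigl(w_m(\cos(\kappa s))-w_m(c_0)\bigr)$.
\item Choose $\delta$ after $\bar w$, with $\nu^{-1}(\bar w-w_m(c_0))\le\bar w-w_m(c_0)-2\delta$, and take $Q=Q_\delta$. This is the decoupling of scales you lack.
\item The trajectory then never reaches $\{w=\bar w\}$, and cannot cross the left wall or the bottom (where $g_2>0$). So either it exits through $\{v=v_2\}$, where $g_1\ge\delta$, or it satisfies $w_\varepsilon(\bar s)\le\bar w-\delta=w_m(\cos(\kappa\bar s))-\delta$. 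In both cases your Step 2 applies.
\end{enumerate}
Being dragged left toward the attracting branch in the meantime is harmless. No fold normal form, blow-up or Riccati analysis is needed.
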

\begin{proof}
We are going to take as neighborhood of $(v_m(\cos(\kappa s_0)),w_m(\cos(\kappa s_0)))$ a square of the type  
\[Q_\delta:=(v_m(\cos(\kappa s_0))-\delta,v_m(\cos(\kappa s_0))+\delta)\times (w_m(\cos(\kappa s_0))-\delta,w_m(\cos(\kappa s_0))+\delta),\]
for some $\delta>0$ small enough. 

\begin{figure}[ht]
    \centering
    \begin{tikzpicture}
    \node[anchor=south west, inner sep=0cm] (image1) at (0,0)
    {\includegraphics[width=0.55\linewidth]{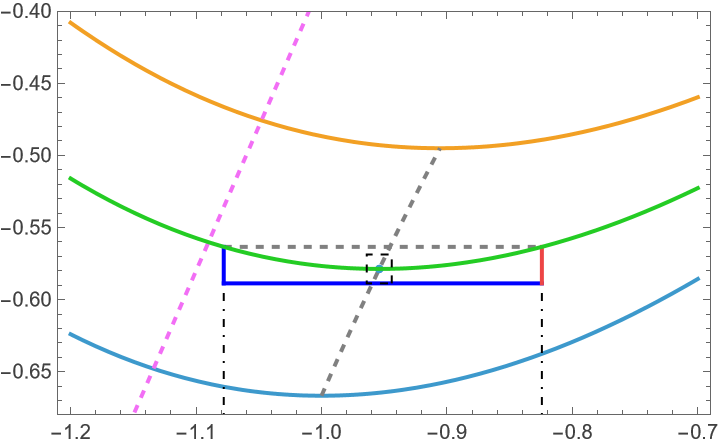}};
    \begin{scope}[
    shift={(image1.south west)},
    x={(image1.south east)}, 
    y={(image1.north west)}]
      %\draw[step=0.1, gray, very thin] (0,0) grid (1,1);
      \node at (0.52,0.19) {\small $J_m$};
      \node[
        fill=white, 
        fill opacity=0.6, 
        text opacity=1, 
        draw=none, 
        thick,
        rounded corners,
        inner sep=1pt
      ] at (0.34,0.2) {\small $v_1$};
      \node[
        fill=white, 
        fill opacity=0.6, 
        text opacity=1, 
        draw=none, 
        thick,
        rounded corners,
        inner sep=1pt
      ] at (0.78,0.16) {\small $v_2$};
    
      \draw[-{Latex[length=3mm,width=2mm]}, thick]
      (0.6,0.5) -- (0.55,0.42);
      
      \node at (0.62,0.55) {\small $Q_\delta$};
      \node at (0.44,0.84) {$\Lambda$};
      \node at (0.22,0.420) {\tiny $\left(v_1,\bar{w}\right)$};
      \node at (0.83,0.42) {\tiny $\left(v_2,\bar{w}\right)$};
      \node at (0.13,0.83) {$C_1$};
      \node at (0.19,0.6) {$C_{\cos(\kappa s_0)}$};
      \node at (0.13,0.28) {$C_{\text{-}1}$};      
    \end{scope}
    \node at (4.5cm,-0.3cm) {\Large $v$ axis};
    \node[rotate=90] at (-0.4cm, 2.7cm) {\Large $w$ axis};
    \end{tikzpicture}
    \caption{Regions $Q$ and $Q_\delta$ in the proof of Lemma \ref{lem:escaping}}
    \label{fig:figure6_5_regionQ}
\end{figure}

Consider $\bar w$ in the interval $(w_m(\cos(\kappa s_0)),w_m(1))$ and set $\bar s:=\min\{s>s_0\mid w_m(\cos(\kappa s))=\bar w\}$. Let $v_1<v_2<0$ be such that $(v_1,\bar w)$ and $(v_2,\bar w)$ are the two intersection points of the cubic 
$C_{\cos(\kappa s_0)}$ with the horizontal line $\{w=\bar w\}$ that lie in the half-plane $\{v<0\}$. Let, moreover, $\delta$ be such that $w_m(\cos(\kappa s_0))+\delta<\bar w$ (See Figure \ref{fig:figure6_5_regionQ}).

Notice that for every $\varepsilon>0$ and every $s\in [s_0,\bar s]$, the vector $(\frac{d}{ds}v,\frac{d}{ds}w)$ obtained from \eqref{limit_system_singular0} points to the right when computed on the vertical segment $\{v_1\}\times (w_m(\cos(\kappa s_0))-\delta,\bar w)$ and to the top when computed on  the horizontal segment $(v_1,v_2)\times \{w_m(\cos(\kappa s_0))-\delta\}$.

Thanks to \eqref{escaping_condition}, up to taking $\bar w$ close enough to $w_m(\cos(\kappa s_0))$ and $\delta$ small enough, we can assume that there exists $\nu>1$ such that 
\[v_m(\cos(\kappa s))AB \kappa \sin(\kappa s)\ge \nu (v-\gamma w+\beta)  \]
for every $(v,w)\in [v_1,v_2]\times [w_m(\cos(\kappa s_0))-\delta,\bar w]$ and every $s\in [s_0,\bar s]$.

We deduce
that if a trajectory $(v_\varepsilon(\cdot),w_\varepsilon(\cdot))$ of \eqref{limit_system_singular0} stays in 
$[v_1,v_2]\times [w_m(\cos(\kappa s_0))-\delta,\bar w]$ for all times $s$ in the interval $[s_0,\bar s]$ then 
\begin{align}
    \bar w-w_m(\cos(\kappa s_0)) 
    &=\int_{s_0}^{\bar s}\frac{d}{ds} w_m(\cos(\kappa s))ds
    =\int_{s_0}^{\bar s} 
    v_m(\cos(\kappa s))AB \kappa \sin(\kappa s)ds\nonumber\\
   &
    \ge 
    \nu \int_{s_0}^{\bar s}(v_\varepsilon(s)-\gamma w_\varepsilon(s)+\beta)ds =\nu \int_{s_0}^{\bar s}\frac{d}{ds}w_\varepsilon(s)ds
    =\nu (w_\varepsilon(\bar s)-w_\varepsilon(s_0)).\label{eq:fornu}
\end{align}
Let $\delta$ be small enough so that 
\[ \frac{\bar w-w_m(\cos(\kappa s_0))}{\nu}\le \bar w-w_m(\cos(\kappa s_0))-2\delta.\]
As a consequence,  we can conclude from \eqref{eq:fornu} that the solution to \eqref{limit_system_singular0} with initial condition $(v_\varepsilon(s_0),w_\varepsilon(s_0))\in Q_\delta$ either stays in $[v_1,v_2]\times [w_m(\cos(\kappa s_0))-\delta,\bar w]$ for all $s\in [s_0,\bar s]$, and in that case satisfies $w_\varepsilon(\bar s)\le \bar w-\delta$, or exits $[v_1,v_2]\times [w_m(\cos(\kappa s_0))-\delta,\bar w]$ before time $\bar s$, and in that case it does so through the segment $\{v_2\}\times [w_m(\cos(\kappa s_0))-\delta,\bar w]$.

In both cases, we can repeat the argument for the  construction of the region $H$  in the proof of Theorem~\ref{thm_activation_piecewise_constant}, deducing that for $\varepsilon$ small enough (independent of the specific choice of initial condition in $Q_\delta$), the trajectory 
$(v_\varepsilon,w_\varepsilon)$ undergoes  an action potential after $s_0$. 
\end{proof}

\begin{remark}  \label{rmk:Xup-left}  Conditions~\eqref{nonescaping_condition} and \eqref{escaping_condition} can be interpreted in terms of the phase portrait of $\Xup$ near $J_m$: notice that the denominator in the first component of $\Xup$ goes to zero and is negative as $(v,w)$ approaches $J_m$ from the left (since $v<v_m(c)$ implies that $v^2>v_m(c)^2=r(c)$). 
The numerator, meanwhile, is positive (respectively, negative) at points of $J_m$ where \eqref{nonescaping_condition} (respectively, \eqref{escaping_condition}) holds. 
Hence, regular trajectories on the left of $J_m$ are almost horizontal and point to the left (respectively, right) near points where \eqref{nonescaping_condition} (respectively, \eqref{escaping_condition}) holds.
\end{remark}

\subsection{No tonic spiking under a non-escaping condition: Proof of Theorem \ref{theorem_results_singular_system}}\label{sec-nontonic}~\\
Lemma~\ref{lemma_non_spiking} below is a reformulation of Theorem~\ref{theorem_results_singular_system} that uses the notation introduced in this section. Its proof is now straightforward, based on the analysis of the regular trajectories of the limit system discussed above and the comparison result from Lemma~\ref{lemma_comparison}.

\begin{figure}[ht]
  \centering
    \centering
    \begin{tikzpicture}
      %Figure 7a
      \node[anchor=north east, inner sep=0cm] (image1) at (0.3\linewidth,10cm)
        {\includegraphics[width=0.3\linewidth]{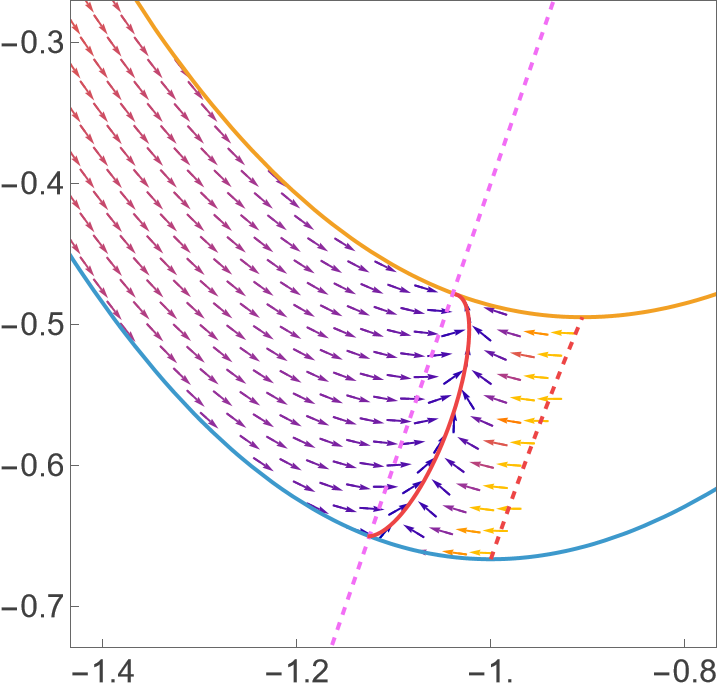}};
      \begin{scope}[
        shift={(image1.south west)},
        x={(image1.south east)}, 
        y={(image1.north west)}]
        %\draw[step=0.1, gray, very thin] (0,0) grid (1,1);
        \node at (0.65,0.85) {$\Lambda$};
        \node at (0.75,0.62) {$C_1$};
        \node at (0.75,0.15) {$C_{\text{-}1}$};
        \node at (0.22,0.16) {\Large $\Xup$};
        \node at (0.8,0.35) {$J_m$};
        \node at (0.9,0.9) {(a)};
      \end{scope}
      %Figure 7b
      \node[anchor=north east, inner sep=0cm] (image2) at (0.62\linewidth,10cm)
        {\includegraphics[width=0.3\linewidth]{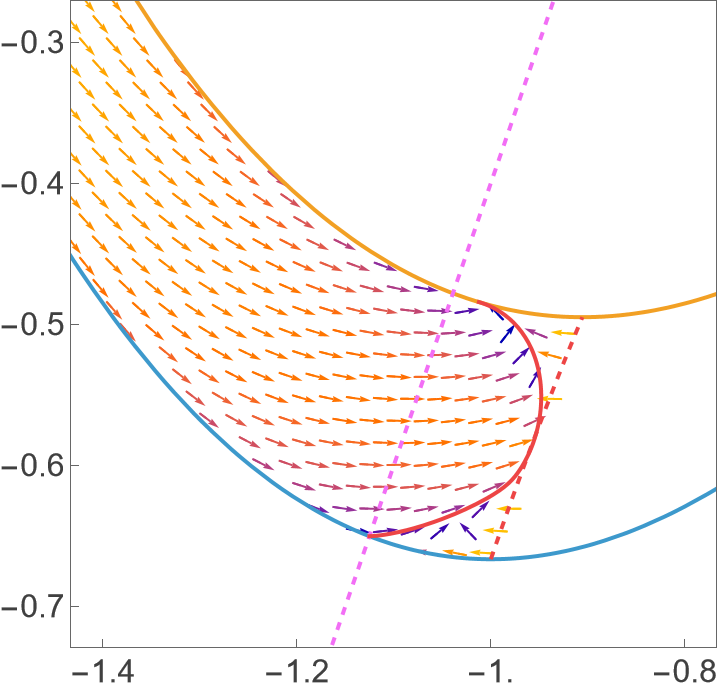}};
      \begin{scope}[
        shift={(image2.south west)},
        x={(image2.south east)}, 
        y={(image2.north west)}]
        %\draw[step=0.1, gray, very thin] (0,0) grid (1,1);
        \node at (0.65,0.85) {$\Lambda$};
        \node at (0.75,0.62) {$C_1$};
        \node at (0.75,0.15) {$C_{\text{-}1}$};
        \node at (0.22,0.16) {\Large $\Xup$};
        \node at (0.8,0.35) {$J_m$};
        \node at (0.9,0.9) {(b)};
      \end{scope}
      %Figure 7c
      \node[anchor=north east, inner sep=0cm] (image3) at (0.93\linewidth,10cm)
        {\includegraphics[width=0.31\linewidth]{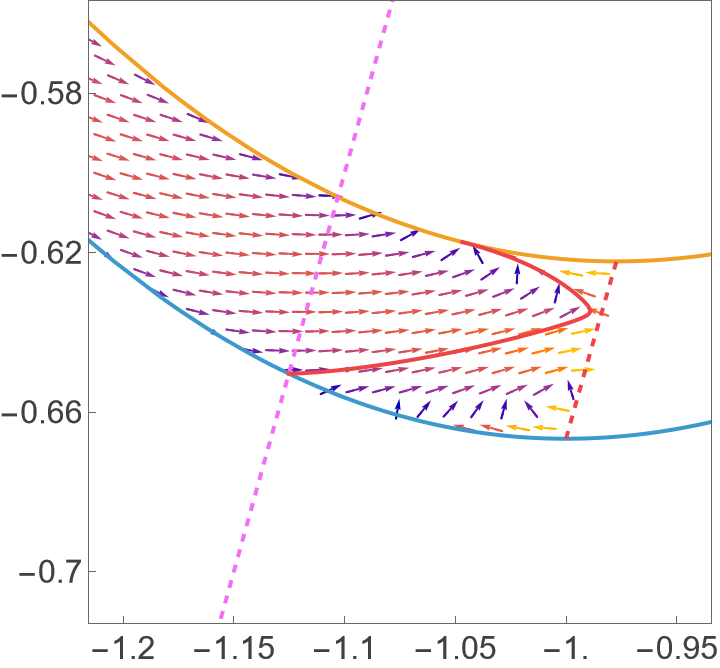}};
      \begin{scope}[
        shift={(image3.south west)},
        x={(image3.south east)}, 
        y={(image3.north west)}]
        %\draw[step=0.1, gray, very thin] (0,0) grid (1,1);
        \node at (0.45,0.85) {$\Lambda$};
        \node at (0.65,0.70) {$C_1$};
        \node at (0.65,0.3) {$C_{\text{-}1}$};
        \node at (0.22,0.16){\Large $\Xup$};
        \node at (0.9,0.45) {$J_m$};
        \node at (0.9,0.9) {(c)};
      \end{scope}
      %Figure 7d
      \node[anchor=north east, inner sep=0cm] (image4) at (0.3\linewidth,5cm)
        {\includegraphics[width=0.3\linewidth]{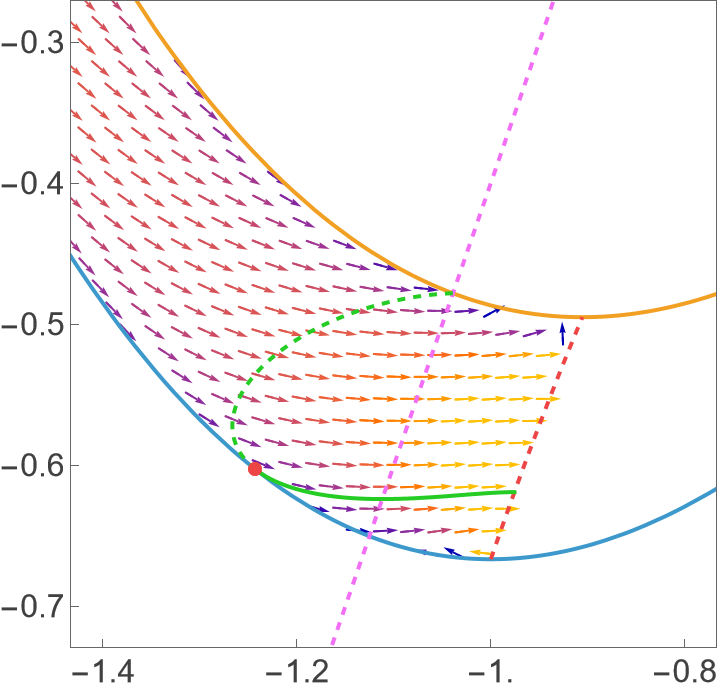}};
      \begin{scope}[
        shift={(image4.south west)},
        x={(image4.south east)}, 
        y={(image4.north west)}]
        %\draw[step=0.1, gray, very thin] (0,0) grid (1,1);
        \node at (0.65,0.85) {$\Lambda$};
        \node at (0.75,0.62) {$C_1$};
        \node at (0.75,0.15) {$C_{\text{-}1}$};
        \node at (0.22,0.16) {\Large $\Xup$};
        \node at (0.3,0.3) {$P$};
        \node at (0.8,0.35) {$J_m$};
        \node at (0.9,0.9) {(d)};
      \end{scope}
      %Figure 7e
      \node[anchor=north east, inner sep=0cm] (image5) at (0.62 \linewidth,5cm)
        {\includegraphics[width=0.305\linewidth]{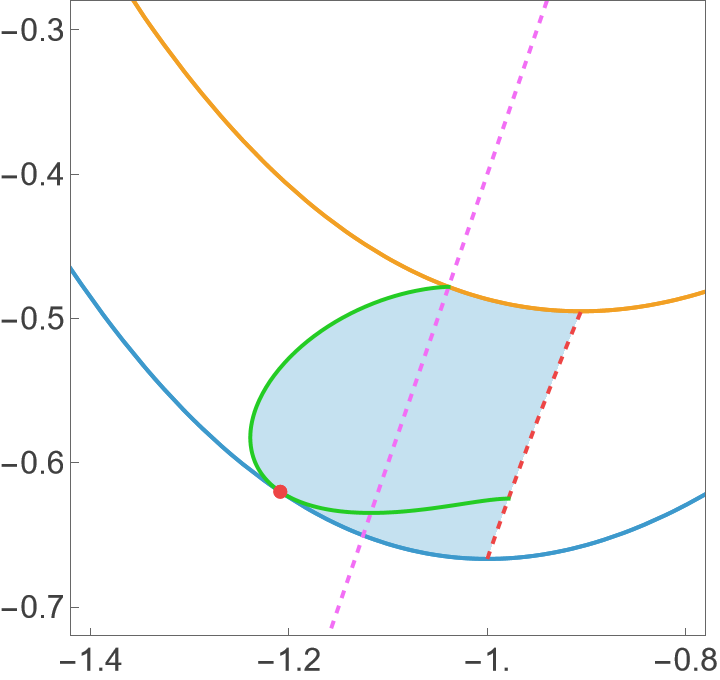}};
      \begin{scope}[
        shift={(image5.south west)},
        x={(image5.south east)}, 
        y={(image5.north west)}]
        %\draw[step=0.1, gray, very thin] (0,0) grid (1,1);
        \node at (0.65,0.85) {$\Lambda$};
        \node at (0.75,0.62) {$C_1$};
        \node at (0.75,0.14) {$C_{\text{-}1}$};
        \node at (0.8,0.35) {$J_m$};
        \node at (0.35,0.25) {$P$};
        \node at (0.6,0.4) {\Large $\mathcal{E}$};
        \node at (0.9,0.9) {(e)};
      \end{scope}

    \node[rotate=90] at (-0.5cm, 2.5cm) {\Large $w$ axis};
    \node[rotate=90] at (-0.5cm, 7.5cm) {\Large $w$ axis};
    \node at (2.5cm, 0cm) {\Large $v$ axis};
    \node at (7.5cm, 0cm) {\Large $v$ axis};
    \node at (12.5cm, 5cm) {\Large $v$ axis};
    %\draw[step=1cm, gray, very thin] (0,0) grid (17cm,10cm);
    \end{tikzpicture}

  \caption{(a) - (d) Vector plot of the field $\Xup$. All cases use $\gamma = 0.5$, $\beta = 0.8$. 
  (a) $A= B =0.3$, $\kappa=0.5$, 
  (b) $A= B =0.3$, $\kappa = 1.58$, 
  (c) $A=B=0.15$, $\kappa = 6.4$, 
  (d),(e) $A=B=0.3$, $\kappa = 3$. (a) - (c) satisfy the conditions of Theorem \ref{theorem_results_singular_system}, whereas (d) and (e) satisfy Assumption(E)}
  \label{fig:nonescaping}    
\end{figure}

\begin{lemma}\label{lemma_non_spiking}
    Let $(v,w)$ be a maximal integral curve of the vector field $\Xup$ with initial condition  $(v_e(-1),w_e(-1))$. Suppose that $(v,w)$ reaches $C_1$ before reaching $J_m$. Then exists $\varepsilon_0>0$ such that for all $\varepsilon\in (0, \varepsilon_0)$ no solution  of \eqref{FHN_intro} undergoes tonic spiking.
\end{lemma}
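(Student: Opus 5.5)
The strategy is to show that every trajectory of \eqref{FHN_intro}, in the slow time $s=t/\varepsilon$ (i.e.\ system \eqref{limit_system_singular0}), eventually enters a neighborhood of $J_e$ at a suitable phase. From there it must track, over each period $2\pi/\kappa$, a regular solution of the singular system \eqref{limit_system_singular} that stays to the left of $J_m$. Lemma~\ref{lem:tracking} then keeps $v_\varepsilon<0$ on each period, and we close the argument by a periodicity/induction argument.

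\textbf{Step 1 (reference trajectory).} Consider the regular solution $\mathcal{S}$ of \eqref{limit_system_singular} starting at $s=0$, where $\cos(\kappa s)=1$ is decreasing. We run it first along $\Xdown$ for $s\in[0,\pi/\kappa]$ and then along $\Xup$ for $s\in[\pi/\kappa,2\pi/\kappa]$. For the $\Xup$ phase, the key claim is the following: by hypothesis the $\Xup$-orbit from $(v_e(-1),w_e(-1))$ reaches $C_1$ before $J_m$. By Lemma~\ref{lemma_comparison}, every $\Xup$-trajectory starting at time $\pi/\kappa$ on $C_{-1}$ with $v\le v_e(-1)$ stays to the left of that orbit. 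Hence it also reaches $C_1$ (at time $2\pi/\kappa$) without touching $J_m$. The $\Xdown$ phase is harmless: when $\cos(\kappa s)$ decreases, condition \eqref{nonescaping_condition} holds automatically. Moreover, $\Xdown$ trajectories on the left attracting branch cannot reach $J_m$, since $J_m$ moves down–left as $c$ decreases while $\dot w\ge$ the speed of $w_m$. So they end on $C_{-1}$. I must check that they end with $v\le v_e(-1)$. Points above $\Lambda$ move down toward $J_e$, and the crossing point with $C_{-1}$ lies left of $J_e$ or near it. If the endpoint lies slightly right of $v_e(-1)$, I would use continuity: an open neighborhood of the hypothesis orbit still reaches $C_1$ before $J_m$, since this is an open condition by transversality and compactness. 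So I fix a small margin.

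\textbf{Step 2 (reaching the good region).} Use Lemma~\ref{lem_region_many_visits} with a small neighborhood $\Omega$ of $J_e$. This gives infinitely many times at which the trajectory is near $J_e$, hence near the left attracting branches, at some phase. Since trajectories near the attracting branch are tracked (Lemma~\ref{lem:tracking}), after at most one period the trajectory is, at a phase $s\equiv \pi/\kappa \pmod{2\pi/\kappa}$, within a small distance of a point of $C_{-1}$ with $v\le v_e(-1)+\rho$, where $\rho$ is the margin from Step 1.

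\textbf{Step 3 (induction).} Let $K$ be the compact set of such starting points. Lemma~\ref{lem:tracking}, applied with local uniformity in the initial data over $K$, gives $\varepsilon_0$ such that every solution of \eqref{limit_system_singular0} starting in $K$ at phase $\pi/\kappa$ satisfies two properties. First, $v_\varepsilon<0$ over one full period. Second, at the next phase $\pi/\kappa+2\pi/\kappa$, it lies again in $K$, because the limit trajectories map $K$ strictly into its interior. Induction then yields $v_\varepsilon<0$ for all later times, so no solution undergoes tonic spiking.

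\textbf{Main obstacle.} I expect the hardest step to be the uniformity in Step 3. Lemma~\ref{lem:tracking} only gives convergence on $[\tilde s_0,s_1]$, with an initial boundary layer. We need a single $\varepsilon_0$ and the self-mapping $K\to K$ with margins. The boundary layer is handled because the initial points are already $O(\text{small})$-close to the attracting branch. The transition at the phases where $\cos(\kappa s)=\pm1$ also has to be handled; it is smooth, since the reference trajectory stays away from $J_m\cup(-J_m)$.
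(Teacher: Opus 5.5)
Your skeleton matches the paper's proof, which consists of three sentences: repeated visits near $J_e$ (Lemma~\ref{lem_region_many_visits}), comparison with the $\Xup$-orbit from $(v_e(-1),w_e(-1))$ (Lemma~\ref{lemma_comparison}), and tracking (Lemma~\ref{lem:tracking}). Your Step~3 bookkeeping is a reasonable expansion of what the paper leaves implicit.

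\textbf{Main gap (Step~1).} The step you flag in Step~1 is a genuine gap, and your patch does not close it. You need every regular solution to return, at each phase where $\cos(\kappa s)=-1$, to the part $K$ of the left branch of $C_{-1}$ with $v\le v_e(-1)+\rho$. The continuity argument only absorbs an overshoot smaller than $\rho$. But $\rho$ is fixed by how far the hypothesis orbit stays from $J_m$, while the overshoot comes from the $\Xdown$ half-period, and nothing links the two. Your remark that ``points above $\Lambda$ move down toward $J_e$'' does not help either. The relevant starting point is the endpoint on $C_1$ of the hypothesis orbit, and that point lies below $\Lambda$. Indeed, on $J_e$ the first component of $\Xup$ is $\kappa ABv\sqrt{1-c^2}/(r(c)-v^2)$. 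For $|c|<1$ both numerator and denominator are negative, so this is positive. Hence the orbit immediately enters $\{v-\gamma w+\beta>0\}$ and cannot cross back through $J_e$.

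\textbf{The missing idea} is an elementary invariance. Let $v_{-1}(w)$ be the abscissa of the left branch of $C_{-1}$ at height $w$. At any height $w\le w_e(-1)$, every point of the band of left branches satisfies $v\ge v_{-1}(w)\ge v_e(-1)\ge \gamma w-\beta$. Hence $\frac{dw}{ds}=v-\gamma w+\beta\ge 0$ there, and $\frac{dw}{ds}\ge\gamma\rho'$ on the level $w=w_e(-1)-\rho'$. So $\{w\ge w_e(-1)-\rho'\}$ is forward invariant for regular solutions, with strict inflow. It contains $J_e$, and its trace on the left branch of $C_{-1}$ is exactly $K$ once $\rho'$ is matched to $\rho$. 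Combined with comparison (for $v\le v_e(-1)$) and openness (for the extra margin), this gives the strict self-map of $K$ over one period that Step~3 needs. With it, the uniformity you list as the main obstacle is routine. The paper's one-line appeal to Lemma~\ref{lemma_comparison} leaves this step implicit as well.

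\textbf{Secondary gap (Step~2).} Lemma~\ref{lem_region_many_visits} gives visits near $J_e$ with no information on the phase. The inference ``near $J_e$, hence near the left attracting branches'' fails when the phase is mismatched. Consider a point near $(v_e(-1),w_e(-1))$ at a time where $\cos(\kappa s)\approx 1$ and $w_e(-1)<w_m(1)$; this is the case of interest, since otherwise Theorem~\ref{thm_geometric_condition} already applies. There is then no left branch at that height, and the solution fires. Even a solution that does relax onto $C_{\cos(\kappa s)}$ during an $\Xup$ half-period may lie to the right of the hypothesis orbit, where comparison says nothing.

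What is true is that the phase-matched points $(v_e(\cos\kappa s),w_e(\cos\kappa s))$ form a compact family of safe configurations. During $\Xdown$ half-periods nothing can reach $J_m$. During $\Xup$ half-periods these points lie to the left of the hypothesis orbit, by the sign computation above. In both cases the invariance then returns them to $K$. Since the cubics foliate the band and each meets $J_e$ once, a visit close both to $J_e$ and to the current cubic is close to a phase-matched point. By openness, such a visit leads into $K$. You still have to show that such visits occur. Spikes before entering $K$ are harmless, since only finitely many spikes need to be excluded.
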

\begin{proof}
By Lemma~\ref{lem_region_many_visits}, every trajectory of \eqref{FHN_intro}, and hence of \eqref{limit_system_singular0}, ends up in a neighborhood of $J_e$. Lemma~\ref{lemma_comparison} and the hypothesis on the integral curve of $\Xup$ with initial condition $(v_e(-1),w_e(-1))$ guarantee that any regular trajectory of \eqref{limit_system_singular} starting from $J_e$  can be extended forward in time indefinitely without intersecting $J_m$.
Lemma~\ref{lem:tracking} then allows us to conclude.
\end{proof}

\begin{corollary}\label{cor:noescaping}
For $\kappa$  small enough  there exists $\varepsilon_0>0$ such that for all $\varepsilon\in (0, \varepsilon_0)$ no solution of \eqref{FHN_intro} undergoes tonic spiking.
\end{corollary}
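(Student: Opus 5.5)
The plan is to reduce Corollary~\ref{cor:noescaping} to Lemma~\ref{lemma_non_spiking}. It suffices to show that, for $\kappa>0$ small enough, the maximal integral curve of $\Xup$ starting at $(v_e(-1),w_e(-1))$ reaches $C_1$ before reaching $J_m$. The lemma then provides the required $\varepsilon_0$.

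The key observation is that the non-escaping condition \eqref{nonescaping_condition} holds uniformly on $J_m$ once $\kappa$ is small. Since $(A,B)\in\mathcal E_0$, we have $v_e(c)<v_m(c)$ for every $c\in[-1,1]$. The $w$-nullcline $\Lambda$ crosses each cubic $C_c$ only at $(v_e(c),w_e(c))$, and this crossing lies strictly on the attracting left branch. Hence $J_m$ lies strictly below $\Lambda$, and by compactness of $J_m$ there is $\mu>0$ with
\[
v_m(c)-\gamma w_m(c)+\beta\ge \mu \quad \text{for all } c\in[-1,1].
\]
The left-hand side of \eqref{nonescaping_condition} is bounded in modulus by $\kappa AB\sqrt{r(-1)}$. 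So for $\kappa<\mu/(AB\sqrt{r(-1)})$, condition \eqref{nonescaping_condition} holds at every point of $J_m$, and, by continuity, in a fixed neighborhood $U$ of $J_m$ that does not shrink as $\kappa\to0$. By Remark~\ref{rmk:Xup-left}, on the part of $U$ lying to the left of $J_m$ (i.e.\ $v<v_m(\mathfrak c(v,w))$), the field $\Xup$ then has negative first component, and it blows up as one approaches $J_m$. Integral curves of $\Xup$ therefore cannot reach $J_m$ from the left.

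Next, I check that the integral curve starting at $(v_e(-1),w_e(-1))$ stays on the left of $J_m$ and actually reaches $C_1$. Along an integral curve of $\Xup$, the index $c=\mathfrak c(v,w)$ increases at rate of order $\kappa\sqrt{1-c^2}$, consistent with $c=\cos(\kappa s)$ running from $-1$ to $1$ over $s\in[\pi/\kappa,2\pi/\kappa]$ (started at $c=-1$). The component $w$ obeys $\dot w=v-\gamma w+\beta$, which drives the point toward $J_e$ on a time scale of order $1$. As $\kappa\to0$ the trajectory is therefore quasi-static, tracking $J_e$ at a distance that tends to $0$, in the spirit of Lemma~\ref{lemma_general_properties}-\ref{lemma_preliminary_item2} applied to the reduced one-dimensional slow flow on the left branches. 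Since $J_e$ is at positive distance from $J_m$, the curve stays away from $J_m$ for the whole half-period. It then reaches $C_1$ (where $c=1$) near $(v_e(1),w_e(1))$.

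The main obstacle is making this quasi-static tracking rigorous uniformly up to the endpoint $c=1$, where $\sin(\kappa s)\to0$ and the parametrization by $c$ degenerates. I would handle it by working in time $s$ for the nonautonomous scalar equation $\dot w=v(w,s)-\gamma w+\beta$, with $v(w,s)$ the left-branch root. This equation has an exponentially stable slowly varying equilibrium $w_e(\cos(\kappa s))$ with a uniform rate, because $\partial_w(v-\gamma w)<-\gamma$ on the left branch. A standard Gronwall or Lyapunov estimate then gives $|w(s)-w_e(\cos\kappa s)|=O(\kappa)$. Choosing $\kappa$ so that this error is smaller than $\mathrm{dist}(J_e,J_m)$ concludes the argument.
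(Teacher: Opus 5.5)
Your proof is correct. Its first half is the paper's argument. For small $\kappa$, the non-escaping condition \eqref{nonescaping_condition} holds on all of $J_m$; your compactness bound $v_m(c)-\gamma w_m(c)+\beta\ge\mu$ makes explicit what the paper only asserts. Remark~\ref{rmk:Xup-left} then says that $\Xup$ points left near $J_m$. At that point the paper concludes directly with Lemma~\ref{lemma_non_spiking}, leaving implicit that the curve actually arrives on $C_1$.

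Your second half is a genuinely different, quantitative route. You write the slow dynamics in time $s$ as the scalar equation $\frac{d}{ds}w=v(w,\cos(\kappa s))-\gamma w+\beta$ on the left branch, whose $w$-derivative is below $-\gamma$. This gives $|w(s)-w_e(\cos(\kappa s))|\le \kappa\sup|w_e'|/\gamma$. That estimate alone shows both that the curve stays away from $J_m$ and that it reaches $C_1$ at $s=2\pi/\kappa$, so the Remark-based step becomes redundant.

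Your formulation also removes an ambiguity in the paper. The point $(v_e(-1),w_e(-1))$ is a zero of $\Xup$, and $\Xup$ is not Lipschitz there because of the factor $\sqrt{1-\mathfrak{c}^2}$. So ``the maximal integral curve'' from that point is not unique, and the constant solution never reaches $C_1$. Prescribing $c=\cos(\kappa s)$ on $[\pi/\kappa,2\pi/\kappa]$ selects the intended curve. The paper's route buys brevity and ties the corollary to the escaping/non-escaping dichotomy of Section~\ref{section_escaping_condition}; yours buys an explicit tracking estimate and a cleaner existence argument.

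Two small points. First, your final threshold should be on the $w$-gap $\min_{c}(w_e(c)-w_m(c))$, or simply ``$\kappa$ small enough''. A $w$-error below ${\rm dist}(J_e,J_m)$ does not by itself keep the point on the left branch or close to $J_e$. Second, ``points left, hence cannot reach $J_m$'' is best justified as follows: along $\Xup$, $\frac{d}{ds}(r(c)-v^2)=-AB\kappa\sqrt{1-c^2}-2v\frac{dv}{ds}<0$ wherever $v<0$ and $\frac{dv}{ds}<0$. Hence $r(c)-v^2$, which vanishes exactly on $J_m$, moves away from zero.
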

\begin{proof} 
For $\kappa$ small enough, condition \eqref{nonescaping_condition} is satisfied along the entire curve $J_m$. Let $(v,w)$ be the maximal integral curve of the vector field $\Xup$ with initial condition $(v_e(-1),w_e(-1))$. Since $\Xup$ points left near $J_m$ (Remark~\ref{rmk:Xup-left}), the trajectory $(v,w)$ stays at a positive distance from $J_m$. We conclude using Lemma~\ref{lemma_non_spiking}.
\end{proof}

Examples where the assumptions of Lemma \ref{lemma_non_spiking} are satisfied are illustrated in Figure~\ref{fig:nonescaping} a), b), c). 

Another consequence of the assumptions of Lemma~\ref{lemma_non_spiking} is the existence of periodic trajectories contained in the region bounded by $C_1$, $C_{-1}$, $\Gamma_1= \{ \exp(t\Xup)(v_e(-1),w_e(-1))\mid t\in[0,\pi/\kappa]\}$, $\Gamma_2= \{ \exp(t\Xdown)(v_e(1),w_e(1))\mid t\in[0,\pi/\kappa]\}$ this is a direct consequence of the intermediate value theorem applied to  $\exp(\frac{\pi}{\kappa}\Xup) \exp(\frac{\pi}{\kappa}\Xdown): C_1 \to C_1$.

More generally, it may happen that trajectories of $\Xup$ from $J_e$ never reach $J_m$, even if condition \eqref{escaping_condition} is satisfied for some $s_0$ (cf.~the right of Figure~\ref{fig:nonescaping}c). 
According to Lemma~\ref{lemma_non_spiking}, also in this case, we observe non-tonic spiking for $\varepsilon$ sufficiently small.

\subsection{Heuristics for tonic spiking under an escaping condition}\label{sec:heuristics}

In this section, we discuss a positive spiking result: we identify a condition under which heuristic reasoning yields that most trajectories of \eqref{limit_system_singular0} undergo tonic spiking. Numerical evidence supporting the heuristics is provided in Section~\ref{sec:numerical_experiments}.

Let us introduce Assumption (E), where the letter E stands for \emph{escaping}. 

\begin{esc_cond}
Denoting by $P$ the endpoint on $C_{-1}$ of the integral curve of $X_{\downarrow}$ with initial condition  $(v_e(1),w_e(1))$, we  assume that the integral curve of $X_{\uparrow}$ with initial condition $P$ reaches a point of $J_m$ where \eqref{escaping_condition} is satisfied. 
\end{esc_cond}
The situation described in Assumption~(E) is illustrated in Figure~\ref{fig:nonescaping}d. 

Suppose Assumption~(E) holds. Denote by ${\cal E}$ the bounded region delimited by $C_1$, $C_{-1}$, $J_m$, and the arc $\Gamma$, corresponding to the integral curve of $\Xdown$ connecting $(v_e(1),w_e(1))$ and $P$ (see Figure~\ref{fig:nonescaping}e).
Then 
\begin{itemize}
\item integral lines of $\Xup$ with initial condition in $C_{-1}$ at the right of $P$ end up in $J_m$ before touching $C_1$, as a consequence of Lemma~\ref{lemma_comparison};

\item again by Lemma~\ref{lemma_comparison}, integral lines of $\Xdown$ with initial condition in $\mathcal{E}$ end up in $C_{-1}$ at the right of $P$;

\item since $\Xup$ is inward pointing along $\Gamma$, integral lines of $\Xup$ with initial condition in $\Gamma$ cannot leave $\mathcal{E}$ crossing $\Gamma$;

\item we deduce that a regular solution of \eqref{limit_system_singular} starting in ${\cal E}$  finally ends up in $J_m$ (with at most two switches between $\Xup$ and $\Xdown$); 
\item the endpoints on $J_m$ of regular trajectories of \eqref{limit_system_singular} starting in ${\cal E}$ 
cannot  satisfy
\eqref{nonescaping_condition}, since otherwise the last piece of the regular trajectory, which is necessarily an integral curve of $\Xup$, could not have reached $J_m$ (Remark~\ref{rmk:Xup-left}). 
Since the set of $c\in [-1,1]$ such that $v_m(c) A B \kappa \sqrt{1-c^2} =v_m(c) - \gamma w_m(c) + \beta$ has measure zero, we deduce that for almost every initial condition in $\mathcal{E}$, the corresponding regular trajectory of \eqref{limit_system_singular} ends up in $J_m$ at a point where  \eqref{escaping_condition} holds.
\end{itemize}

Given an initial condition in $\mathcal{E}$ such that the corresponding regular trajectory of \eqref{limit_system_singular} ends up in $J_m$ at a point where  \eqref{escaping_condition} holds, we can apply Lemmas~\ref{lem:tracking} and \ref{lem:escaping} to deduce that for $\varepsilon>0$ small enough, the trajectory of \eqref{limit_system_singular0} with the same initial condition undergoes an action potential. 
Combining this observation with Lemma~\ref{lem_region_many_visits},
we can expect that, for $\varepsilon>0$ small enough,  most trajectories of \eqref{limit_system_singular0} undergo tonic spiking. 

We call the argument \emph{heuristic} since we cannot guarantee analytically that the value of $\varepsilon$ coming from Lemma~\ref{lem:escaping} is uniform when the initial condition in $\mathcal{E}$ approaches the exceptional regular trajectories of \eqref{limit_system_singular} reaching $J_m$ at points where \eqref{escaping_condition} is not satisfied. But since the extension of those regular trajectories beyond $J_m$ is unstable, we can conjecture that initial conditions for trajectories of \eqref{limit_system_singular0} that are not tonically spiking are of measure zero. 

In the next section, we present numerical evidence for such a conjecture, illustrating, in particular, the robustness of the tonic spiking pattern with respect to different choices of parameters for which Assumption~(E) is satisfied.

\section{Numerical experiments}\label{sec:numerical_experiments}
To evaluate the criterion proposed by our approach, we conduct two experiments to assess tonic activation of system \eqref{FHN_intro} as stimulus parameters are varied, while satisfying Assumption (E). The simulations were performed using Wolfram Mathematica, Version 14.3.

\subsection{Experiment 1} 

In our first experiment, we study the tonic spiking of system \eqref{FHN_intro} with $\eta = \varepsilon \kappa$. For this purpose, we fix the values $\beta = 0.8$, $\gamma = 0.5$. We run 8 experiments, each with a different value of the amplitude parameter ($A=B$) in the interval $[0.15,0.5]$ with a step size of 0.05. In each instance, we vary $(\kappa, \varepsilon)$. In the horizontal axis of Figure~\ref{fig:simulation_different_AB}, the parameter $\kappa$ lies in $[0.2, 12]$ with step size $\Delta \kappa = 0.2$, and in the vertical axis, the parameter $\varepsilon$ lies in $[0.005,0.205]$ with step size $\Delta \varepsilon = 0.005$. We indicate with a red vertical line the infimum of the values of $\kappa$ such that \eqref{escaping_condition} is satisfied for some $s_0\in(0,\pi/\kappa)$. As an initial condition, we choose the value $(0,w_e(1))$, which always generates at least one action potential. To identify tonic spiking, we simulate for a time $T=2000$, and we count the number of action potentials observed (according to Definition \ref{def-spike}). Over the finite simulation time, it is not possible to assess tonic spiking as introduced in Definition \ref{def-spike}. Accordingly, we consider the system to be in tonic spiking if it produces more than one action potential during the simulation. The simulation time is chosen so that we can observe tonic spiking even for the smallest value of $\varepsilon$ under consideration. The results of the experiment are shown in Figure \ref{fig:simulation_different_AB}.

For $A=B=0.15$, we observe non-tonic spiking. This is consistent with Theorem \ref{thm_geometric_condition}, which indicates that, for small values of $(A, B)\in \mathcal{E}_0$ for which the geometric condition \eqref{eq:no-geom_condition} is satisfied, we do not observe tonic spiking. For larger values of $(A, B)$, we observe that the prediction on tonic spiking improves as $\varepsilon \to 0$, which is consistent with the singular limit analysis presented in Section \ref{section_escaping_condition}. Also observe that the smallest value of $\kappa$ for which \eqref{eq:no-geom_condition} is satisfied decreases as $A=B$ increases, which is also observed in the experiments. 

\subsection{Experiment 2} 

This experiment explores the dependence on the initial condition of the solution of \eqref{FHN_intro} with $\eta = \kappa \varepsilon$ and compares it with the results obtained in Section \ref{section_escaping_condition}. We perform 24 experiments, each testing for tonic spiking under different initial conditions. We use $A= B = 0.3$, $\gamma = 0.5$, $\beta = 0.8$ for cases (a), (b), and (c), and $A= B = 0.3$, $\gamma=0.6$, $\beta= 0.7$ for cases (d), (e), and (f). The simulation time is $T=1000$. Each instance considers different values of the parameters $\varepsilon$ and $\kappa$. We hold the parameter $\kappa\in \{1, 2, 2.5\}$ constant across rows of Figure~\ref{fig:simulations_different_kappa_ep}. We hold the parameter $\varepsilon\in \{0.02, 0.04, 0.06, 0.1\}$ constant across columns 2-5. In the first column, we test the system in the singular limit based on the results of Sections~\ref{sec-nontonic} and \ref{sec:heuristics}. In cases (a) and (d), the criteria predict non-tonic spiking, and for (b), (c), (e), and (f), the heuristic predicts tonic spiking. Each instance considers an evenly spaced grid of the square $[-2,2]\times [-2,2]$ with $21^2 =441$ points, which is used as the initial condition $(v_0,w_0)$, and we count the number of action potentials observed.

The results in rows (a) and (d) indicate that among all initial conditions under consideration, at most one action potential is observed in the entire time of simulation, which is consistent with the prediction of non-tonic spiking given by Theorem \ref{theorem_results_singular_system}. In rows (b), (c), (e), and (f), the condition for tonic spiking considered in Section \ref{sec:heuristics} is satisfied, thus tonic spiking is expected for almost every initial condition for $\varepsilon$ small.
We observe this in rows (b) and (c), where tonic spiking is observed for the smaller values of $\varepsilon$, but as it increases, we transition to non-tonic spiking. In cases (e) and (f), we observe tonic spiking for all values of $\varepsilon$ tested. 

\begin{figure}
  \begin{tikzpicture}[x=1pt,y=1pt]

% --- Top row of images ---
\node[anchor=south west] (fig8a) at (0,110)
    {\includegraphics[width=0.22\linewidth]{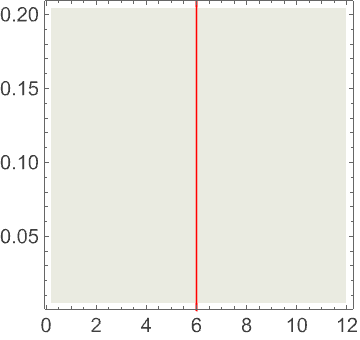}};
      \begin{scope}[
        shift={(fig8a.south west)},
        x={(fig8a.south east)}, 
        y={(fig8a.north west)}]
        %\draw[step=0.1, gray, very thin] (0,0) grid (1,1);
        \node[
        fill=white, 
        fill opacity=0.0, 
        text opacity=1, 
        draw=none, 
        thick,
        rounded corners,
        inner sep=1pt
      ]  at (0.74,0.72) {\tiny $A\!=\!B\!=\!0.15$};
      \end{scope}
\node[anchor=south west] (fig8b) at (0.23\linewidth,110)
    {\includegraphics[width=0.22\linewidth]{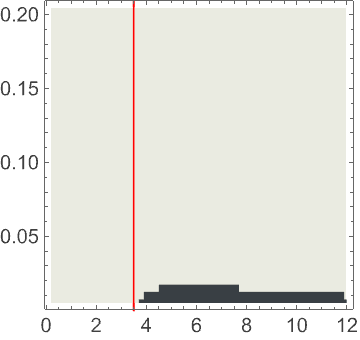}};
      \begin{scope}[
        shift={(fig8b.south west)},
        x={(fig8b.south east)}, 
        y={(fig8b.north west)}]
        %\draw[step=0.1, gray, very thin] (0,0) grid (1,1);
        \node[
        fill=white, 
        fill opacity=0.0, 
        text opacity=1, 
        draw=none, 
        thick,
        rounded corners,
        inner sep=1pt
      ]  at (0.74,0.72) {\tiny $A\!=\!B\!=\!0.2$};
      \end{scope}
\node[anchor=south west] (fig8c) at (0.46\linewidth,110)
    {\includegraphics[width=0.22\linewidth]{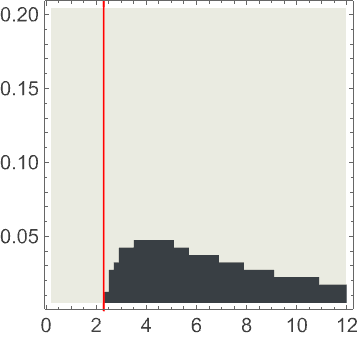}};
      \begin{scope}[
        shift={(fig8c.south west)},
        x={(fig8c.south east)}, 
        y={(fig8c.north west)}]
        %\draw[step=0.1, gray, very thin] (0,0) grid (1,1);
        \node[
        fill=white, 
        fill opacity=0.0, 
        text opacity=1, 
        draw=none, 
        thick,
        rounded corners,
        inner sep=1pt
      ]  at (0.74,0.72) {\tiny $A\!=\!B\!=\!0.25$};
      \end{scope}
\node[anchor=south west] (fig8d) at (0.69\linewidth,110)
    {\includegraphics[width=0.22\linewidth]{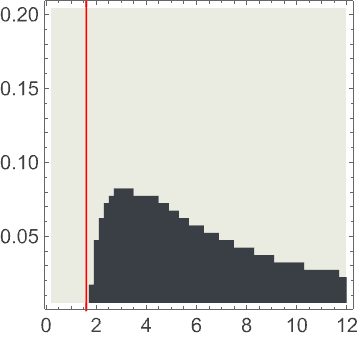}};
      \begin{scope}[
        shift={(fig8d.south west)},
        x={(fig8d.south east)}, 
        y={(fig8d.north west)}]
        %\draw[step=0.1, gray, very thin] (0,0) grid (1,1);
        \node[
        fill=white, 
        fill opacity=0.0, 
        text opacity=1, 
        draw=none, 
        thick,
        rounded corners,
        inner sep=1pt
      ]  at (0.74,0.72) {\tiny $A\!=\!B\!=\!0.3$};
      \end{scope}

% --- Bottom row of images ---
\node[anchor=south west] (fig8e) at (0,5)
    {\includegraphics[width=0.22\linewidth]{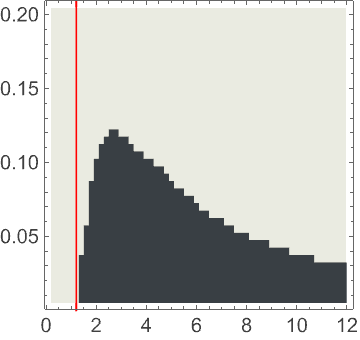}};
\begin{scope}[
        shift={(fig8e.south west)},
        x={(fig8e.south east)}, 
        y={(fig8e.north west)}]
        %\draw[step=0.1, gray, very thin] (0,0) grid (1,1);
        \node[
        fill=white, 
        fill opacity=0.0, 
        text opacity=1, 
        draw=none, 
        thick,
        rounded corners,
        inner sep=1pt
      ]  at (0.74,0.72) {\tiny $A\!=\!B\!=\!0.35$};
      \end{scope}
\node[anchor=south west] (fig8f) at (0.23\linewidth,5)
    {\includegraphics[width=0.22\linewidth]{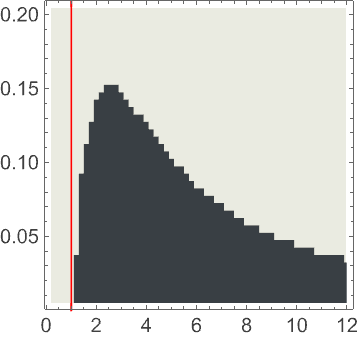}};
\begin{scope}[
        shift={(fig8f.south west)},
        x={(fig8f.south east)}, 
        y={(fig8f.north west)}]
        %\draw[step=0.1, gray, very thin] (0,0) grid (1,1);
        \node[
        fill=white, 
        fill opacity=0.0, 
        text opacity=1, 
        draw=none, 
        thick,
        rounded corners,
        inner sep=1pt
      ]  at (0.74,0.72) {\tiny $A\!=\!B\!=\!0.4$};
      \end{scope}
\node[anchor=south west] (fig8g)at (0.46\linewidth,5)
    {\includegraphics[width=0.22\linewidth]{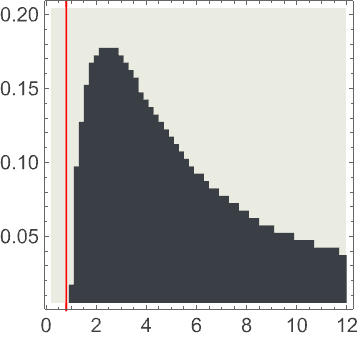}};
\begin{scope}[
        shift={(fig8g.south west)},
        x={(fig8g.south east)}, 
        y={(fig8g.north west)}]
        %\draw[step=0.1, gray, very thin] (0,0) grid (1,1);
        \node[
        fill=white, 
        fill opacity=0.0, 
        text opacity=1, 
        draw=none, 
        thick,
        rounded corners,
        inner sep=1pt
      ]  at (0.74,0.72) {\tiny $A\!=\!B\!=\!0.45$};
      \end{scope}
\node[anchor=south west] (fig8h)at (0.69\linewidth,5)
    {\includegraphics[width=0.22\linewidth]{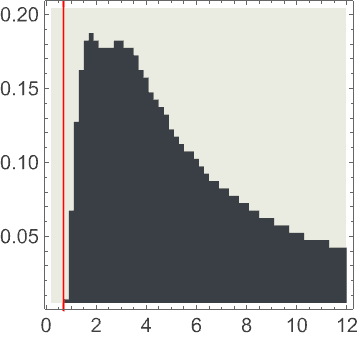}};
\begin{scope}[
        shift={(fig8h.south west)},
        x={(fig8h.south east)}, 
        y={(fig8h.north west)}]
        %\draw[step=0.1, gray, very thin] (0,0) grid (1,1);
        \node[
        fill=white, 
        fill opacity=0.0, 
        text opacity=1, 
        draw=none, 
        thick,
        rounded corners,
        inner sep=1pt
      ]  at (0.74,0.72) {\tiny $A\!=\!B\!=\!0.5$};
      \end{scope}

% --- Axis labels ---
\node at (210,0) {\Large $\kappa$ axis};
\node[rotate=90] at (-10,100) {\Large $\varepsilon$ axis};

\end{tikzpicture}
  \caption{Plot of activation for fixed values of $\beta= 0.8$, $\gamma=0.5$, and varying $(\kappa,\varepsilon)$ with $\eta = \kappa \varepsilon$. The dark region indicates tonic spiking, defined as the presence of at least 2 action potentials. The maximum simulation time is $T=2000$, chosen to observe activation even for the smallest $\varepsilon$. The vertical red line indicates the infimum of the values of $\kappa$ for which there exists some $s_0$ at which 
  the escaping condition \eqref{escaping_condition} is satisfied. The values of $A = B$ vary across the plots. For the experiments, the system is initialized at $(0, w_e(1))$ corresponding to the specific pair $(A,B)$. Notice how the prediction for the activation improves as we approach the singular limit $\varepsilon \to 0$. }
  \label{fig:simulation_different_AB}
\end{figure}

\begin{figure}
\centering
%\fbox{%
  \setlength{\fboxsep}{0pt}%
  \begin{tikzpicture}[x=1pt,y=1pt]

    \node[anchor=south west] (fig9) at (0,0)
    {\includegraphics[width=0.83\linewidth]{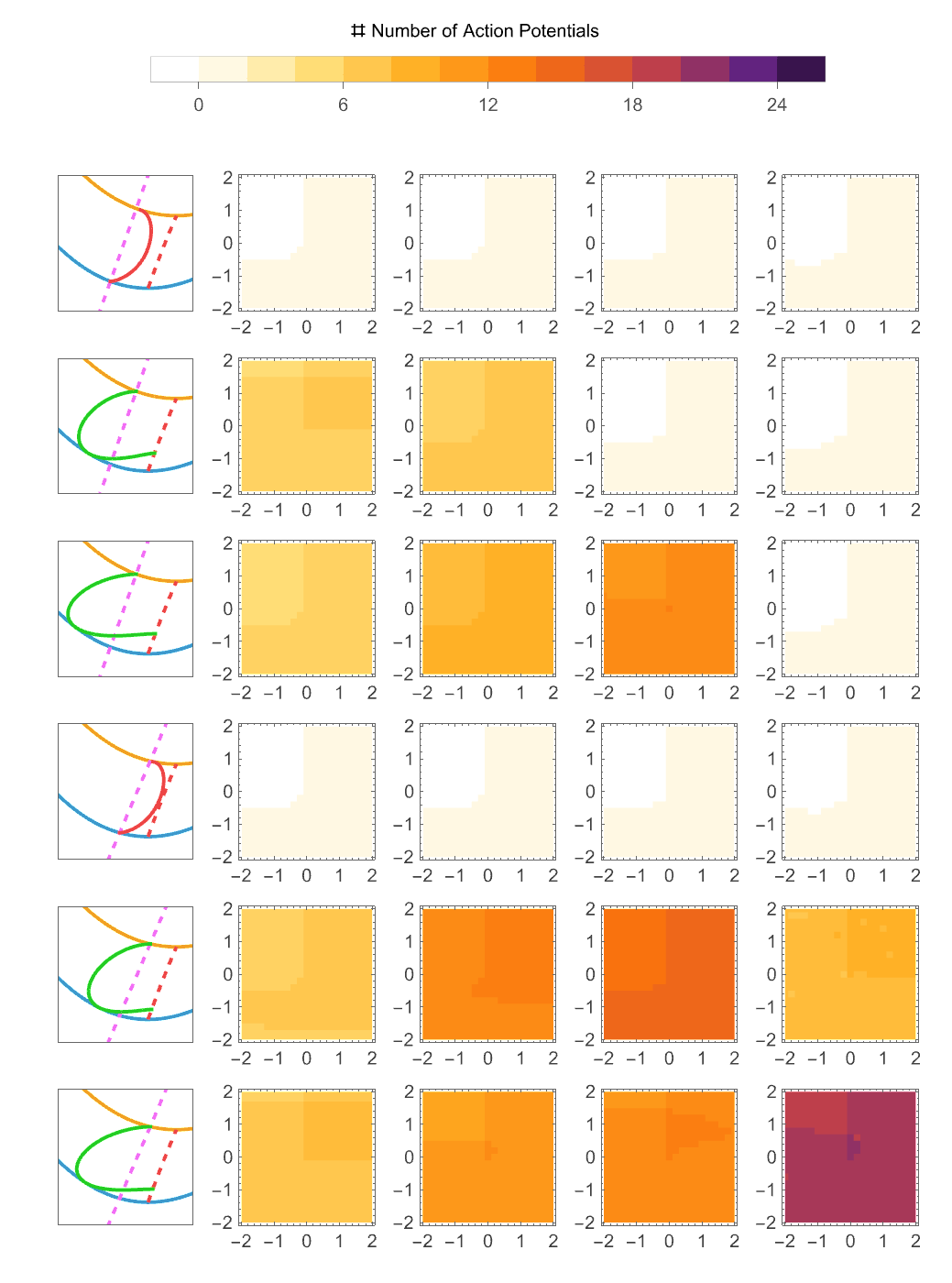}};
      \begin{scope}[
        shift={(fig9.south west)},
        x={(fig9.south east)}, 
        y={(fig9.north west)}]
        %\draw[step=0.02, gray, very thin] (0,0) grid (0.3,1);
        %\draw[step=0.05, black, thin] (0,0) grid (0.3,1);
        %%Labels first row
        \node at (0.18,0.84) {\small $C_1$};
        \node at (0.18,0.76) {\small $C_{-1}$};
        \node at (0.10,0.77) {\small $\Lambda$};
        \node at (0.192,0.795) {\small $J_m$};
        
        \node[
          fill=white, fill opacity=0.0, text opacity=1,  
          draw=none, thick, rounded corners, inner sep=1pt,
          align=left
          ]  at (0.307,0.83) {\small $\kappa\!=\!1$\\
          \small $\varepsilon\!=\!0.02$};
        \node[
          fill=white, fill opacity=0.0, text opacity=1, 
          draw=none, thick, rounded corners, inner sep=1pt,
          align=left
          ]  at (0.493,0.83) {\small $\kappa\!=\!1$\\
          \small $\varepsilon\!=\!0.04$};
        \node[
          fill=white, fill opacity=0.0, text opacity=1, 
          draw=none, thick, rounded corners, inner sep=1pt,
          align=left
          ]  at (0.68,0.83) {\small $\kappa\!=\!1$\\
          \small $\varepsilon\!=\!0.06$};
        \node[
          fill=white, fill opacity=0.0, text opacity=1, 
          draw=none, thick, rounded corners, inner sep=1pt,
          align=left
          ]  at (0.86,0.83) {\small $\kappa\!=\!1$\\
          \small $\varepsilon\!=\!0.1$};
        %%Labels second row
        \node at (0.18,0.7) {\small $C_1$};
        \node at (0.18,0.62) {\small $C_{-1}$};
        \node at (0.10,0.63) {\small $\Lambda$};
        \node at (0.192,0.655) {\small $J_m$};
        \node[
          fill=white, fill opacity=0.0, text opacity=1, 
          draw=none, thick, rounded corners, inner sep=1pt,
          align=left
          ]  at (0.307,0.69) {\small $\kappa\!=\!2$\\
          \small $\varepsilon\!=\!0.02$};
        \node[
          fill=white, fill opacity=0.0, text opacity=1, 
          draw=none, thick, rounded corners, inner sep=1pt,
          align=left
          ]  at (0.493,0.69) {\small $\kappa\!=\!2$\\
          \small $\varepsilon\!=\!0.04$};
        \node[
          fill=white, fill opacity=0.0, text opacity=1, 
          draw=none, thick, rounded corners, inner sep=1pt,
          align=left
          ]  at (0.68,0.69) {\small $\kappa\!=\!2$\\
          \small $\varepsilon\!=\!0.06$};
        \node[
          fill=white, fill opacity=0.0, text opacity=1, 
          draw=none, thick, rounded corners, inner sep=1pt,
          align=left
          ]  at (0.86,0.69) {\small $\kappa\!=\!2$\\
          \small $\varepsilon\!=\!0.1$};
        %%Labels third row
        \node at (0.18,0.56) {\small $C_1$};
        \node at (0.18,0.48) {\small $C_{-1}$};
        \node at (0.10,0.49) {\small $\Lambda$};
        \node at (0.192,0.515) {\small $J_m$};
        \node[
          fill=white, fill opacity=0.0, text opacity=1, 
          draw=none, thick, rounded corners, inner sep=1pt,
          align=left
          ]  at (0.307,0.55) {\small $\kappa\!=\!2.5$\\
          \small $\varepsilon\!=\!0.02$};
        \node[
          fill=white, fill opacity=0.0, text opacity=1, 
          draw=none, thick, rounded corners, inner sep=1pt,
          align=left
          ]  at (0.493,0.55) {\small $\kappa\!=\!2.5$\\
          \small $\varepsilon\!=\!0.04$};
        \node[
          fill=white, fill opacity=0.0, text opacity=1, 
          draw=none, thick, rounded corners, inner sep=1pt,
          align=left
          ]  at (0.68,0.55) {\small $\kappa\!=\!2.5$\\
          \small $\varepsilon\!=\!0.06$};
        \node[
          fill=white, fill opacity=0.0, text opacity=1,  
          draw=none, thick, rounded corners, inner sep=1pt,
          align=left
          ]  at (0.86,0.55) {\small $\kappa\!=\!2.5$\\
          \small $\varepsilon\!=\!0.1$};
        %%Labels fourth row
        \node at (0.19,0.42) {\small $C_1$};
        \node at (0.18,0.34) {\small $C_{-1}$};
        \node at (0.11,0.345) {\small $\Lambda$};
        \node at (0.192,0.375) {\small $J_m$};
        \node[
          fill=white, fill opacity=0.0, text opacity=1, 
          draw=none, thick, rounded corners, inner sep=1pt,
          align=left
          ]  at (0.307,0.41) {\small $\kappa\!=\!1$\\
          \small $\varepsilon\!=\!0.02$};
        \node[
          fill=white, fill opacity=0.0, text opacity=1,  
          draw=none, thick, rounded corners, inner sep=1pt,
          align=left
          ]  at (0.493,0.41) {\small $\kappa\!=\!1$\\
          \small $\varepsilon\!=\!0.04$};
        \node[
          fill=white, fill opacity=0.0, text opacity=1,  
          draw=none, thick, rounded corners, inner sep=1pt,
          align=left
          ]  at (0.68,0.41) {\small $\kappa\!=\!1$\\
          \small $\varepsilon\!=\!0.06$};
        \node[
          fill=white, fill opacity=0.0, text opacity=1,  
          draw=none, thick, rounded corners, inner sep=1pt,
          align=left
          ]  at (0.86,0.41) {\small $\kappa\!=\!1$\\
          \small $\varepsilon\!=\!0.1$};
        %%Labels fifth row
        \node at (0.19,0.28) {\small $C_1$};
        \node at (0.18,0.2) {\small $C_{-1}$};
        \node at (0.11,0.205) {\small $\Lambda$};
        \node at (0.192,0.235) {\small $J_m$};
        \node[
          fill=white, fill opacity=0.0, text opacity=1,  
          draw=none, thick, rounded corners, inner sep=1pt,
          align=left
          ]  at (0.307,0.27) {\small $\kappa\!=\!2$\\
          \small $\varepsilon\!=\!0.02$};
        \node[
          fill=white, fill opacity=0.0, text opacity=1,  
          draw=none, thick, rounded corners, inner sep=1pt,
          align=left
          ]  at (0.493,0.27) {\small $\kappa\!=\!2$\\
          \small $\varepsilon\!=\!0.04$};
        \node[
          fill=white, fill opacity=0.0, text opacity=1,  
          draw=none, thick, rounded corners, inner sep=1pt,
          align=left
          ]  at (0.68,0.27) {\small $\kappa\!=\!2$\\
          \small $\varepsilon\!=\!0.06$};
        \node[
          fill=white, fill opacity=0.0, text opacity=1,  
          draw=none, thick, rounded corners, inner sep=1pt,
          align=left
          ]  at (0.86,0.27) {\small $\kappa\!=\!2$\\
          \small $\varepsilon\!=\!0.1$};    
        %%Labels sixth row
        \node at (0.19,0.14) {\small $C_1$};
        \node at (0.18,0.06) {\small $C_{-1}$};
        \node at (0.11,0.065) {\small $\Lambda$};
        \node at (0.192,0.095) {\small $J_m$};
        \node[
          fill=white, fill opacity=0.0, text opacity=1,  
          draw=none, thick, rounded corners, inner sep=1pt,
          align=left
          ]  at (0.307,0.13) {\small $\kappa\!=\!2.5$\\
          \small $\varepsilon\!=\!0.02$};
        \node[
          fill=white, fill opacity=0.0, text opacity=1,  
          draw=none, thick, rounded corners, inner sep=1pt,
          align=left
          ]  at (0.493,0.13) {\small $\kappa\!=\!2.5$\\
          \small $\varepsilon\!=\!0.04$};
        \node[
          fill=white, fill opacity=0.0, text opacity=1,  
          draw=none, thick, rounded corners, inner sep=1pt,
          align=left
          ]  at (0.68,0.13) {\small $\kappa\!=\!2.5$\\
          \small $\varepsilon\!=\!0.06$};
        \node[
          fill=white, fill opacity=0.0, text opacity=1, 
          draw=none, thick, rounded corners, inner sep=1pt,
          align=left
          ]  at (0.86,0.13) {\small $\kappa\!=\!2.5$\\
          \small $\varepsilon\!=\!0.1$};    

        %labels on the left of the figure
        \draw[black, thick] (0.05,0.06) -- (0.05,0.43);
        \draw[black, thick] (0.05,0.48) -- (0.05,0.85);
        \node at (0.025, 0.83) {(a)}; 
        \node at (0.025, 0.69) {(b)}; 
        \node at (0.025, 0.55) {(c)}; 
        \node at (0.025, 0.41) {(d)}; 
        \node at (0.025, 0.27) {(e)}; 
        \node at (0.025, 0.13) {(f)}; 
        
        % --- Axis labels ---
        \node at (0.5,0.01) {\Large $v$ axis};
        \node[rotate=90] at (0.99,0.5) {\Large $w$ axis};        
      \end{scope}
  \end{tikzpicture}
%}
   \caption{Results for Experiment 2. 
   In the first column, we verify Assumption (E) graphically, and the condition for tonic spiking is satisfied for (b), (c), (e), and (f). Columns 2-5 contain heatmaps in which the number of spikes is counted starting from a specific initial condition. The parameters used are (a)-(c)  $A=B=0.3$, $\gamma=0.5$, $\beta= 0.8$; (d)-(f) $A=B=0.3$, $\gamma=0.6$, $\beta= 0.7$. Additionally, we consider three different values of 
   $\kappa$ ($\kappa =1$ in    (a),(d);  $\kappa=2$ in (b),(e);  $\kappa= 2.5$ in (c),(f)). In each case, 
   four values of $\varepsilon$ are considered:  $0.02, 0.04, 0.06, 0.1$.
   }\label{fig:simulations_different_kappa_ep}
\end{figure}

\section{Discussion and perspectives}

In this work, we studied a model of temporal interference stimulation and explored how the source parameters, i.e., amplitudes and beat frequency, must be chosen to induce tonic spiking. Using techniques based on geometric arguments, singular perturbations, and analysis of switching systems, we develop characterizations of tonic spiking, in particular when $\varepsilon$ is small. Our main contribution is a concrete method for testing for tonic spiking based on the algorithmic construction of geometric objects.

The first element of our characterization is the identification of a geometric obstruction. 
Such a condition is shown to be sharp in the sense that for certain piecewise constant signals, the reversed condition implies tonic spiking (Theorem~\ref{thm_activation_piecewise_constant}).

The second part of our construction studied the regime in which the beat frequency is chosen to be proportional to the small parameter $\varepsilon$, which allows us to characterize tonic activation by studying a `hybrid system' obtained as a singular limit under the scaling $\eta \sim \kappa$. The resulting system allows us to formulate concrete criteria for characterizing tonic spiking (Section \ref{section_escaping_condition}). This analysis suggests that a suitable choice for the system's beat frequency is related to the time constant of the neuron's ionic channels. 

Our positive criteria for tonic spiking (Section \ref{sec:numerical_experiments}) establish tonic spiking for most initial conditions and $\varepsilon$ small, and this is confirmed by the numerical experiments (Section \ref{sec:numerical_experiments}). While the criterion implies a typical spiking behavior, the existence of exceptional trajectories may lead to the presence of special initial conditions from which tonic spiking disappears  (Section \ref{sec:heuristics}). The existence of such solutions is inherent to the periodicity of the mathematical setting (Lemma~\ref{lemma_general_properties}, item \ref{lemma_preliminary_item1}). 

Our criteria for tonic spiking are based on properties of a particular solution of an ODE; thus, continuous dependence gives us a notion of robustness with regard to the parameters. Additionally, observe that the arguments in Section \ref{section_escaping_condition} are based on studying intervals where $f$ is monotone, and not the specific formula of the signal; thus, we have a notion of robustness with respect to the shape or the periodic signal $f$.

A limitation of our analysis of tonic spiking is that it relies on local solution properties, which limits its ability to yield a global answer to the problem (beyond the heuristic). We expect that a complete characterization must account for synchronization to explain the convergence of solutions to the non-autonomous system to periodic trajectories. An important aspect that makes this analysis more difficult is the presence of exceptional $2\pi/\eta$ periodic trajectories of the system (Lemma \ref{lemma_general_properties} item \ref{lemma_preliminary_item1}), which, depending on $\varepsilon>0$, might be comparable or shorter than the typical length of an action potential, and whose amplitude might be smaller than a predefined activation threshold.

\section*{Acknowledgments}
This work has been partially supported by the ANID Millennium Science Initiative Program through the Millennium Nucleus for Applied Control and Inverse Problems NCN19-161, and the grant ANID Fondecyt postdoctorado 3240512.

\section*{Data Availability}
All code used for the simulations and the construction of the figures is available in the GitHub repository \url{https://github.com/estebanpaduro/fhn_activation_tis}

\appendix
\section{Proof of Lemma \ref{properties_equilibrium}}\label{sec-properties}
Any equilibrium $(v,w)$ of \eqref{eq-1} satisfies 
\begin{equation*}
\left\{\begin{array}{rl}
    0 & = r(c)v - \frac{v^3}{3} - w,\\
    0 &= v-\gamma w + \beta.
\end{array}\right.
\end{equation*}
Solving for $w$ and substituting we get that $v$ solves the cubic equation $f(v)=v^3- 3(r(c) -\frac{1}{\gamma}) v + \frac{3\beta}{\gamma} =0$. 
Since $f(0)>0$, $\lim_{v\to -\infty} f(v) = -\infty$,  and $f'$ changes sign at most once in $(-\infty,0)$, it follows that there exists a unique $v=v_e(c)$ in $(-\infty,0)$ for which $f(v)=0$. 

To prove item \ref{lemma_item_uniqueness}, recall that the depressed cubic $z^3 + pz + q = 0$ has a unique real root if and only if $4p^3+27q^2 >0$. In our case, this condition becomes $\beta^2/\gamma^2 > \frac{4}{9}(r(c)-1/\gamma)^3$. 

To prove item~\ref{lemma_item_iv}, let us linearize \eqref{eq-1} near the equilibrium $(v_e(c),w_e(c))$. To that aim, we translate the equilibrium to the origin by considering the state variables
\begin{equation*}
    \tilde v:=v-v_e(c),\quad \tilde w:=w-w_e(c).
\end{equation*}
 Then system \eqref{eq-1} reads
\begin{subequations}\label{eq-centered}
\begin{align}
    \dot{\tilde v} &= (r(c) - v_e(c)^2) \tilde{v} - v_e(c) \tilde{v}^2 - \frac{1}{3} \tilde{v}^3 - \tilde{w},\\
    \dot{\tilde w} &=\varepsilon(\tilde{v} - \gamma \tilde{w}).
\end{align}
\end{subequations}
The linearization of this system at the origin is 
\begin{equation*}
    \frac{d}{dt} \binom{V}{W} = \left(\begin{array}{cc}
         r(c)-v_e(c)^2& -1 \\
         \varepsilon & - \varepsilon \gamma 
    \end{array}\right)\binom{V}{W} =: J \binom{V}{W}.
\end{equation*}
The local exponential stability of $(v_e(c),w_e(c))$ is equivalent to the conditions $\text{tr}(J)<0$ and $\det(J)>0$, which can be combined into $r(c)-v_e(c)^2 < \min\{ \varepsilon \gamma, 1/\gamma\}$, thus establishing item~\ref{lemma_item_iv}. 

To prove item \ref{lemma_item_global}, we first need a result regarding the size of the basin of attraction of the equilibrium $(v_e(c),w_e(c))$. To that aim, consider the Lyapunov function
$$L(v,w) = \frac{1}{2}\tilde{v}^2+\frac{1}{2\varepsilon} \tilde{w}^2.$$
Taking its derivative along the trajectories of the recentered system \eqref{eq-centered} leads to 
\begin{align*}
\frac{d}{dt}L(\tilde v,\tilde w)&=\tilde{v}\left( (r(c)-v_e(c)^2)\tilde{v} - v_e(c)\tilde{v}^2 - \frac{1}{3}\tilde{v}^3-\tilde{w} \right) + \tilde{w}\left(  \tilde{v} - \gamma \tilde{w}\right)\\
&= (r(c) - v_e(c)^2)\tilde{v}^2 - v_e(c) \tilde{v}^3 - \frac{1}{3}\tilde{v}^4 - \gamma \tilde{w}^2\\
&\leq \left(r(c)-v_e(c)^2 -v_e(c)\tilde{v}\right)\tilde{v}^2 - \gamma \tilde{w}^2.
\end{align*}
Therefore, if $M$ is such that
\begin{equation}\label{eq:dddd}
r(c)-v_e(c)^2 -v_e(c)\tilde{v}<0
\end{equation}
on the set where $L(\tilde v,\tilde w) \leq M$, then $L$ is a Lyapunov function in such a sublevel set of $L$. By assumption $r(c)<v_e(c)^2$. Recalling that $v_e(c)<0$, \eqref{eq:dddd} can be rewritten as 
\begin{align*}
\tilde{v}<\frac{-(r(c)-v_e(c)^2)}{-v_e(c)}
 = \frac{v_e(c)^2-r(c)}{|v_e(c)|}.
\end{align*}
Setting $M:=(v_e(c)^2-r(c))^2/(2|v_e(c)|^2)$, we deduce that 
the sublevel set $\frac{1}{2}\tilde{v}^2+\frac{1}{2\varepsilon} \tilde{w}^2 \leq M$
is contained in the basin of attraction of the origin. In particular, the basin of attraction of the equilibrium $(v_e(c),w_e(c))$ contains the horizontal segment
\[\Omega := \left[v_e(c) - \sqrt{2M} , v_e(c) + \sqrt{2M}\right]\times \{w_e(c)\}, \]
which does not depend on $\varepsilon>0$.

Second, according to Fact~\ref{fact1}, we can pick $L>0$ large enough that the region $R = [-L,L]\times [-S,S]$,  with $S:=\frac{L+\beta}{\gamma}+1$, is positively invariant and globally exponentially attractive for system \eqref{eq-1}. It is clear that $R$ contains the equilibrium $(v_e(c),w_e(c))$ (otherwise, global convergence to $R$ would be impeded by an equilibrium outside). Consider the arc $\Gamma := \{(v,w)\in C_c\cap R\mid  v\leq v_e(c)\}$. We claim that any trajectory of  system~\eqref{eq-1} initially in $R$ either reaches $\Gamma$ in finite time or converges to $(v_e(c), w_e(c))$ as $t\to +\infty$. To see this, consider any trajectory  $(v,w)$ of  system~\eqref{eq-1} with $(v(0), w(0))\in R$. By forward invariance of $R$, we can extend the solution for all positive times. Compactness of $R$ implies that as $t\to \infty$, the trajectory must converge to an $\omega$-limit set. Since the system is bidimensional, the Poincar\'e--Bendixson theorem ensures that such an $\omega$-limit set can only be made of stationary points or periodic orbits. In the first case, since the equilibrium is unique by assumption, the limit must be $(v_e(c), w_e(c))$, and we are done. In the second case, the periodic orbit is a closed curve, which bounds an open region that must contain the equilibrium $(v_e(c),w_e(c))$ of the system \cite[Chapter 10]{hirsch_differential_2013}. Since any trajectory in $R$ that encircles the equilibrium must cross $\Gamma$ in finite time, the claim is proved.

Third, for $0<\delta < \sqrt{2M}$, consider the region $H$ above $\Omega$ delimited by $\Gamma$, $T_{-\delta} \Gamma$ (the left translation of $\Gamma$ by $\delta$), $\partial R$, and the segment $\Omega$. Observe that $\dot{w}<0$ in $H$ since it is to the left of $\Lambda$. On $\partial R$, the vector field $G_c$ points inwards $R$. On $\Gamma$,  $G_c$ points to the left, so it points inwards $H$. On $T_{-\delta} \Gamma$, none of the components $g_1$ and $\varepsilon g_2$ of $G_c$ vanish. 
Let $K:=\min_{(x,y)\in T_{-\delta} \Gamma} \frac{g_2(x,y)}{g_1(x,y,c)} <0$, so that $\varepsilon K$ is the most negative slope of $G_c$ on $T_{-\delta}\Gamma$.  
Let also $s<0$ be the least negative slope of $\Gamma$.
Take $\varepsilon$ small so that $s < -\varepsilon K$. We deduce that $G_c$  is inward-pointing towards $H$ on $T_{-\delta} \Gamma$. Since  $H$ contains no stationary points in its interior, any trajectory of \eqref{eq-1} starting inside $H$ must leave $H$ in finite time by crossing $\Omega$ or converge to the equilibrium as $t \to +\infty$. Since, as proved above,  $\Omega$ belongs to the attraction basin of $(v_e(c),w_e(c))$, the trajectory must converge to said equilibrium in all cases.

Thus, we have shown that $R$ is exponentially attractive, that $(v_e(c),w_e(c))$ is locally exponentially stable, and that every trajectory starting in $R$ reaches the attraction basin of $(v_e(c),w_e(c))$ for $\varepsilon>0$ small enough. We can then use a compactness argument on $R$ to conclude that $(v_e(c),w_e(c))$ is globally exponentially stable for $\varepsilon>0$ small enough, which gives us item~\ref{lemma_item_global}. 
\qed

\section{Proof of Lemma~\ref{lemma_general_properties}}\label{appendix_proof_lemma_general_properties}

To prove item \ref{lemma_preliminary_item1}, consider the map $\phi: \R^2 \to \R^2$ that associates with a point $(v_0,w_0)\in \R^2$ the evaluation at time $T$ of the solution $(v,w)$ of \eqref{general_switching_system}. According to Fact \ref{fact1}, $\phi$ is well-defined and continuous, and for every $L>0$ large enough $\phi(R_L)\subset R_L$, where  $R_L = [-L,L]\times[-S,S]$ and $S=(L+\beta)/\gamma+1$. Brouwer's fixed point theorem then implies that the map $\phi$ has a fixed point inside $R_L$. Since the right-hand side of \eqref{general_switching_system} is $T$-periodic, a fixed point of $\phi$ corresponds to a periodic trajectory $(\bar v(\cdot),\bar w(\cdot))$ of the system. 
Such a periodic trajectory is not constant since $f$ is non-constant and $c\mapsto (v_e(c),w_e(c))$ is injective. 

Regarding the last part of the statement of item \ref{lemma_preliminary_item1}, notice that Fact \ref{fact1} implies that the support of $(\bar v(\cdot),\bar w(\cdot))$ is contained in $R_L$. 
In particular, $(\bar v(\cdot),\bar w(\cdot))$ is $K$-Lipschitz continuous, with $K$ independent of $T$. 
Moreover, considering the functions $g_1$ and $g_2$ defined in \eqref{defi_F1}-\eqref{defi_F2}, the support of $(\bar v(\cdot),\bar w(\cdot))$
must intersect at least three of the four regions
\[\{(v,w)\mid \exists\, c\in [-1,1]\mbox{ s.t. } \sigma_1 g_1(v,w,c)\ge 0,\; \sigma_2 g_2(v,w)\ge 0\},\qquad \sigma_1,\sigma_2\in\{-1,1\},\]
since otherwise the trajectory could not be periodic. 
Hence, given any $\delta_1>0$, there exists $\delta_2>0$ such that either ${\rm dist}((\bar v(t),\bar w(t)),J_e)<\delta_1$ for some $t\in [0,T]$ (with $J_e$ defined in \eqref{eq-Je}) or the diameter of the support of $(\bar v(\cdot),\bar w(\cdot))$ is larger than $\delta_2$. 
Taking $\delta_1$ smaller than ${\rm dist}(J_e,\{v=0\})$ and $T$ smaller than $\min(\delta_2,{\rm dist}(J_e,\{v=0\})-\delta_1)/K$,  it follows that the support of $(\bar v(\cdot),\bar w(\cdot))$ is contained in $\{v<0\}$.

For item \ref{lemma_preliminary_item2}, one uses standard converse Lyapunov results to show that for every $c$ there exists a Lyapunov function for the dynamics $(\dot v,\dot w)=G_c(v,w)$ is a neighborhood of $(v_e(c),w_e(c))$ and that such Lyapunov function can be chosen depending smoothly on $c$. Computing the derivative of this Lyapunov function along the solutions of the (slowly) time-varying system, the result follows from basic manipulations (see, for instance, \cite[Lemma 3.2]{cerpaImpactHighFrequencyBasedStability2024}). 

We now move to establish item \ref{lemma_preliminary_item4}. Consider the frozen system \eqref{eq-1} with $c=\bar\varphi$. By assumption, its equilibrium $(v_e(\bar\varphi),w_e(\bar\varphi))$ is locally exponentially stable. The first part of the statement is then a direct consequence of the averaging theorem \cite[Theorem 10.4]{khalil2013} and of the local exponential stability of $(v_e(\bar \varphi),w_e(\bar \varphi))$. For the second part, we invoke \cite[Theorem 2]{teel1999semi}, which states that global asymptotic stability of the averaged system implies semiglobal practical stability for the original system. Since there exists a compact set in which our system evolves (Fact \ref{fact1}), solutions are eventually trapped in a neighborhood of the equilibrium, whose size can be made arbitrarily small by picking $\eta$ large enough.

%\bibliographystyle{habbrv}
%\bibliography{./bibliography} 

\begin{thebibliography}{10}
\expandafter\ifx\csname url\endcsname\relax
  \def\url#1{\texttt{#1}}\fi
\expandafter\ifx\csname doi\endcsname\relax
  \def\doi#1{\burlalt{doi:#1}{http://dx.doi.org/#1}}\fi
\expandafter\ifx\csname urlprefix\endcsname\relax\def\urlprefix{URL }\fi
\expandafter\ifx\csname href\endcsname\relax
  \def\href#1#2{#2}\fi
\expandafter\ifx\csname burlalt\endcsname\relax
  \def\burlalt#1#2{\href{#2}{#1}}\fi

\bibitem{Blanchini1999}
F.~Blanchini.
\newblock Set invariance in control.
\newblock {\em Automatica J. IFAC}, 35(11):1747--1767, 1999.
\newblock \doi{10.1016/S0005-1098(99)00113-2}.

\bibitem{cerpaImpactHighFrequencyBasedStability2024}
E.~Cerpa, N.~Corrales, M.~Courdurier, L.~E. Medina, and E.~Paduro.
\newblock The {{Impact}} of {{High-Frequency-Based Stability}} on the {{Onset}}
  of {{Action Potentials}} in {{Neuron Models}}.
\newblock {\em SIAM Journal on Applied Mathematics}, 84(5):1910--1936, 2024,
  \burlalt{2402.05886}{http://arxiv.org/abs/2402.05886}.
\newblock \doi{10.1137/24M1645632}.

\bibitem{cerpa2023}
E.~Cerpa, M.~Courdurier, E.~Hern\'{a}ndez, L.~E. Medina, and E.~Paduro.
\newblock A partially averaged system to model neuron responses to
  interferential current stimulation.
\newblock {\em J. Math. Biol.}, 86(1):8, 2023.
\newblock \doi{10.1007/s00285-022-01839-8}.

\bibitem{cerpaApproximationStabilityResults2025}
E.~Cerpa, M.~Courdurier, E.~Hernández, L.~E. Medina, and E.~Paduro.
\newblock Approximation and stability results for the parabolic
  {FitzHugh}-{Nagumo} system with combined rapidly oscillating sources.
\newblock {\em Discrete and Continuous Dynamical Systems}, 50(0):69--102, 2025,
  \burlalt{2305.00123}{http://arxiv.org/abs/2305.00123}.
\newblock \doi{10.3934/dcds.2026001}.

\bibitem{Grossman2017}
N.~Grossman, D.~Bono, N.~Dedic, S.~B. Kodandaramaiah, A.~Rudenko, H.-J. Suk,
  A.~M. Cassara, E.~Neufeld, N.~Kuster, L.-H. Tsai, A.~Pascual-Leone, and E.~S.
  Boyden.
\newblock {Noninvasive Deep Brain Stimulation via Temporally Interfering
  Electric Fields}.
\newblock {\em Cell}, 169(6):1029--1041.e16, jun 2017.
\newblock \doi{10.1016/j.cell.2017.05.024}.

\bibitem{hayashi_global_1999}
M.~Hayashi.
\newblock Global {{Asymptotic Stability}} of {{FitzHugh-Nagumo System}}.
\newblock In {\em Proceedings of the {{Ninth International Colloquium}} on
  {{Differential Equations}}}, pages 191--196. De Gruyter, 1999.
\newblock \doi{10.1515/9783112318973-029}.

\bibitem{hirsch_differential_2013}
M.~W. Hirsch, S.~Smale, and R.~L. Devaney.
\newblock {\em Differential equations, dynamical systems, and an introduction
  to chaos}.
\newblock Academic Press, Waltham, MA, 3rd edition, 2013.

\bibitem{Karimi2019}
F.~Karimi, A.~Attarpour, R.~Amirfattahi, and A.~Z. Nezhad.
\newblock {Computational analysis of non-invasive deep brain stimulation based
  on interfering electric fields}.
\newblock {\em Physics in Medicine \& Biology}, 64(23):235010, dec 2019.
\newblock \doi{10.1088/1361-6560/ab5229}.

\bibitem{karimiNeuromodulationEffectTemporal2024}
N.~Karimi, R.~Amirfattahi, and A.~Zeidaabadi~Nezhad.
\newblock Neuromodulation effect of temporal interference stimulation based on
  network computational model.
\newblock {\em Frontiers in Human Neuroscience}, 18, Sept. 2024.
\newblock \doi{10.3389/fnhum.2024.1436205}.

\bibitem{kaumann_uniqueness_1983}
E.~Kaumann and U.~Staude.
\newblock Uniqueness and nonexistence of limit cycles for the {F}itz{H}ugh
  equation.
\newblock In {\em Equadiff 82 ({W}\"{u}rzburg, 1982)}, volume 1017 of {\em
  Lecture Notes in Math.}, pages 313--321. Springer, Berlin, 1983.
\newblock \doi{10.1007/BFb0103262}.

\bibitem{khalil2013}
H.~K. Khalil.
\newblock {\em Nonlinear Systems, 3rd Edition}.
\newblock Pearson, 2002.

\bibitem{kostovaFITZHUGHNAGUMOREVISITED2004}
T.~Kostova, R.~Ravindran, and M.~Schonbek.
\newblock Fitz{H}ugh-{N}agumo revisited: types of bifurcations, periodical
  forcing and stability regions by a {L}yapunov functional.
\newblock {\em Internat. J. Bifur. Chaos Appl. Sci. Engrg.}, 14(3):913--925,
  2004.
\newblock \doi{10.1142/S0218127404009685}.

\bibitem{Mirzakhalili2020}
E.~Mirzakhalili, B.~Barra, M.~Capogrosso, and S.~F. Lempka.
\newblock {Biophysics of Temporal Interference Stimulation}.
\newblock {\em Cell Systems}, 11(6):557--572.e5, 2020.
\newblock \doi{10.1016/j.cels.2020.10.004}.

\bibitem{missey2025non}
F.~Missey, E.~Acerbo, A.~S. Dickey, J.~Trajlinek, O.~Studni{\v{c}}ka,
  C.~Lubrano, M.~d.~A. e~Silva, E.~Brady, V.~V{\v{s}}iansk{\`y}, J.~Szabo,
  et~al.
\newblock Non-invasive temporal interference stimulation of the hippocampus
  suppresses epileptic biomarkers in patients with epilepsy: Biophysical
  differences between kilohertz and amplitude modulated stimulation.
\newblock {\em Brain stimulation}, 2025.
\newblock \doi{10.1016/j.brs.2025.11.008}.

\bibitem{Missey2021}
F.~Missey, E.~Rusina, E.~Acerbo, B.~Botzanowski, A.~Tr{\'{e}}buchon,
  F.~Bartolomei, V.~Jirsa, R.~Carron, and A.~Williamson.
\newblock {Orientation of Temporal Interference for Non-invasive Deep Brain
  Stimulation in Epilepsy}.
\newblock {\em Frontiers in Neuroscience}, 15(June):1--13, 2021.
\newblock \doi{10.3389/fnins.2021.633988}.

\bibitem{Nagumo1942}
M.~Nagumo.
\newblock \"{U}ber die {L}age der {I}ntegralkurven gew\"{o}hnlicher
  {D}ifferentialgleichungen.
\newblock {\em Proc. Phys.-Math. Soc. Japan (3)}, 24:551--559, 1942.

\bibitem{opancarSameBiophysicalMechanism2025}
A.~Opan\v{c}ar, P.~Ondr\'a\v{c}kov\'a, D.~S. Rose, J.~Trajlinek, V.~Derek, and
  E.~D. G{\l}owacki.
\newblock The same biophysical mechanism is involved in both temporal
  interference and direct {kHz} stimulation of peripheral nerves.
\newblock {\em Nature Communications}, 16(1):9006, Oct. 2025.
\newblock \doi{10.1038/s41467-025-64059-w}.

\bibitem{plovieNonlinearitiesTimescalesNeural2025}
T.~Plovie, R.~Schoeters, T.~Tarnaud, W.~Joseph, and E.~Tanghe.
\newblock Nonlinearities and timescales in neural models of temporal
  interference stimulation.
\newblock {\em Bioelectromagnetics}, 46(1):e22522, 2025.
\newblock \doi{10.1002/bem.22522}.

\bibitem{sugie_nonexistence_1991}
J.~Sugie.
\newblock Nonexistence of periodic solutions for the {FitzHugh} nerve system.
\newblock {\em Quarterly of Applied Mathematics}, 49(3):543--554, Sept. 1991.
\newblock \doi{10.1090/qam/1121685}.

\bibitem{teel1999semi}
A.~R. Teel, J.~Peuteman, and D.~Aeyels.
\newblock Semi-global practical asymptotic stability and averaging.
\newblock {\em Systems \& control letters}, 37(5):329--334, 1999.
\newblock \doi{10.1016/S0167-6911(99)00039-0}.

\bibitem{Tihonov1952}
A.~N. Tihonov.
\newblock Systems of differential equations containing small parameters in the
  derivatives.
\newblock {\em Mat. Sbornik N.S.}, 31(73):575--586, 1952.

\bibitem{Vasileva}
A.~B. Vasil'eva.
\newblock Asymptotic behaviour of solutions of certain problems for ordinary
  non-linear differential equations with a small parameter multiplying the
  highest derivatives.
\newblock {\em Uspehi Mat. Nauk}, 18(3(111)):15--86, 1963.
\newblock \doi{10.1070/RM1963v018n03ABEH001137}.

\bibitem{violante2023non}
I.~R. Violante, K.~Alania, A.~M. Cassar{\`a}, E.~Neufeld, E.~Acerbo, R.~Carron,
  A.~Williamson, D.~L. Kurtin, E.~Rhodes, A.~Hampshire, et~al.
\newblock Non-invasive temporal interference electrical stimulation of the
  human hippocampus.
\newblock {\em Nature neuroscience}, 26(11):1994--2004, 2023.
\newblock \doi{10.1038/s41593-023-01456-8}.

\bibitem{wangResponsesModelCortical2023}
B.~Wang, A.~S. Aberra, W.~M. Grill, and A.~V. Peterchev.
\newblock Responses of {Model} {Cortical} {Neurons} to {Temporal}
  {Interference} {Stimulation} and {Related} {Transcranial} {Alternating}
  {Current} {Stimulation} {Modalities}.
\newblock {\em Journal of neural engineering}, 19(6):066047, Jan. 2022.
\newblock \doi{10.1088/1741-2552/acab30}.

\bibitem{Ward2009}
A.~R. Ward.
\newblock {Electrical stimulation using kilohertz-frequency alternating
  current}.
\newblock {\em Physical Therapy}, 89(2):181--190, 2009.
\newblock \doi{10.2522/ptj.20080060}.

\bibitem{xu_precision_2025}
S.~Xu, H.~Cui, X.~Xiao, F.~Manshaii, G.~Hong, and J.~Chen.
\newblock Precision at {Deep} {Brain}: {Noninvasive} {Temporal} {Interference}
  {Stimulation}.
\newblock {\em ACS Nano}, 19(46):39589--39614, Nov. 2025.
\newblock \doi{10.1021/acsnano.5c15238}.

\bibitem{yang_transcranial_2025}
C.~Yang, Y.~Xu, X.~Feng, B.~Wang, Y.~Du, K.~Wang, J.~Lü, L.~Huang, Z.~Qian,
  Z.~Wang, N.~Chen, J.~Zhou, C.~Zhang, and Y.~Liu.
\newblock Transcranial {Temporal} {Interference} {Stimulation} of the {Right}
  {Globus} {Pallidus} in {Parkinson}'s {Disease}.
\newblock {\em Movement Disorders}, 40(6):1061--1069, June 2025.
\newblock \doi{10.1002/mds.29967}.

\end{thebibliography}

\end{document}